\documentclass[12pt,reqno]{amsart}

\usepackage{amsmath,amssymb,amsfonts,latexsym,amscd,enumitem,amsthm}
\usepackage{stmaryrd}
\usepackage{graphicx}
\usepackage[all,cmtip]{xy}
\usepackage{mathrsfs}

\usepackage[pagebackref=false,colorlinks=true,linkcolor=red,citecolor=red]{hyperref}
\hypersetup{backref,pdfpagemode=FullScreen,hypertex,hyperindex=true}

\usepackage{todonotes}

\hoffset -10mm
\addtolength{\textwidth}{20mm}
\voffset=-5mm
\addtolength{\textheight}{10mm}

\makeatletter
\def\@seccntformat#1{%
	\protect\textup{%
		\protect\@secnumfont
		\expandafter\protect\csname format#1\endcsname 
		\csname the#1\endcsname
		\protect\@secnumpunct
	}%
}


\makeatother

\theoremstyle{plain}
\newtheorem{theorem}{Theorem}[section]
\newtheorem{lemma}[theorem]{Lemma}
\newtheorem{proposition}[theorem]{Proposition}
\newtheorem{corollary}[theorem]{Corollary}

\theoremstyle{definition}
\newtheorem{definition}[theorem]{Definition}

\newtheorem{remark}[theorem]{Remark}

\newtheorem{example}[theorem]{Example}

\def\open{{\mathfrak o}}
\def\close{{\mathfrak c}}

\newcommand{\tbigwedge}{\mathop{\textstyle \bigwedge}}
\newcommand{\tbigcap}{\mathop{\textstyle \bigcap}}
\newcommand{\tbigcup}{\mathop{\textstyle \bigcup}}
\newcommand{\tbigvee}{\mathop{\textstyle \bigvee}}

\def\OS{\text{\sf O}}
\def\pprec{{\mathop{\hskip1pt\prec\hskip-2pt \prec\hskip2pt}}}

\newcommand{\ur}{{{\rlap{$\ $}\hbox{$\uparrow$}}}}

\newcommand{\QQ}{\mathbb{Q}}

\newcommand{\cb}{\prec\!\!\prec} 
\newcommand{\Ss}{\mathsf{S}}
\newcommand{\Sz}{\mathsf{ZS}}
\newcommand{\Sc}{\mathsf{CoZS}}
\newcommand{\coz}{\mathsf{Coz}}

\newcommand{\z}{\mathsf{z}}
\newcommand{\co}{\mathsf{coz}}
\newcommand{\ro}{\mathsf{reg}_\open}
\newcommand{\rc}{\mathsf{reg}_\close}
\newcommand{\ddown}{\rotatebox[origin=c]{90}{$\twoheadleftarrow$}}

\begin{document}
	
	\title[Generalizing $\beta$- and $\lambda$-maps]{Generalizing $\beta$- and $\lambda$-maps}
	
	\author[A. B. Avilez]{Ana Bel\'{e}n Avilez}
	
	\address{\hspace*{-\parindent}University of the Western Cape, Department of Mathematics and Applied Mathematics,   \newline 7535 Bellville, South Africa \newline {\it Email addresses}: {\tt anabelenavilez@gmail.com}}
	
	\subjclass[2010]{06D22, 18F70, 54D35, 54C10, 54D15, 54G05.}
	\keywords{Frame, locale, sublocale, $\beta$- and $\lambda$-maps, Stone-Cech compactification, Regular Lindel\"{0}f Correflection, variants of normality, variants of extremal disconnectedness.}
	
	\date{July 23, 2024}
	\begin{abstract}
		We generalize the notions of $\beta$- and $\lambda$-maps to general selections of sublocales, obtaining different classes of localic maps. These new classes of maps are used to characterize almost normality, extremal disconnectedness, $F$-frames, $Oz$-frames, among others types of locales, in a manner akin to the characterization of normal locales via $\beta$-maps. 
		As a byproduct we obtain a characterization of localic maps that preserve the completely below relation (that is, the right adjoints of assertive frame homomorphisms).
	\end{abstract}
	\maketitle
	\section*{Introduction}
	
	Motivated by the idea of characterizing different types of locales using only localic maps, we generalize the notions of $\beta$- and $\lambda$-maps obtaining a variety of classes of maps and offering several characterizations of frames only on conditions on the localic maps. 
	
	In classical topology, a continuous map $k\colon X\to Y$ is an $N$-map (resp. $WN$-map) if 
	\[\mathsf{cl}_{\beta X}k^{-1}[F] = (k^\beta)^{-1} [\mathsf{cl}_{\beta Y} F]\]
	for every closed subset (resp. zero subset) $F$ of $Y$, where $k^\beta$ is the extension of $k$ that maps $\beta X$ to $\beta Y$. 
	These maps were used in \cite{Woods} to give characterizations of normal and $\delta$-normally separated spaces. In \cite{Remote,Socle} the corresponding notions in the point-free setting, $N$- and $WN$-homomorphisms, were firstly introduced. Later in \cite{open,closed} the authors, not only work with the Stone-\v{C}ech compactification, but similarly define $\lambda$-maps (the analogous of $\beta$-maps or $N$-homomorphisms) using the Lidel\"{o}fication of a frame. 
	
	In this paper, we continue the study that started in \cite{maps} where the author generalizes the notion of $\beta$-maps to selections, in order to obtain a general result that characterizes different types of frames. So far these maps were only defined using selections of closed sublocales. Following the reasoning of \cite{parallel} we explore the dual results, when taking selections of open sublocales. Moreover, the same study will be developed for generalized $\lambda$-maps. 
	
	The main purpose of this paper, different from the point of view in \cite{open,closed} which was studying the liftings of frame homomorphisms under the Stone-\v{C}ech compactification and Lindel\"{o}fication, we intend to provide new classes of localic maps and explore the imporatance of these maps in its own right (discarding, in a way, the relation they have with the Stone-\v{C}ech compactification and Lindel\"{o}fication). This paper should be thought of as an attempt to continue the study of different classes of localic maps. 
	
	Here is a brief outlay of the paper. 
	In Section 1 we give some general background and notation. In the following section we introduce a general definition of a $\mathfrak{S}\gamma$-map where $\mathfrak{S}$ stands for a selection of sublocales and $\gamma$ for a special kind of functor. In Section 3 we work with $\mathfrak{S}\beta$-maps, where $\beta$ denotes the Stone-\v{C}ech compactification. We first recall and extend some of the known results, then we take a closer look at $\mathfrak{S}\beta$-maps when the selection function selects classes of open sublocales. This approach is new (as it was never done classically or point-freely) and allows us to obtain dual results as the ones already known and characterize the localic maps that preserve the completely rather below relation.
	Doing a similar study as in the previous section, we introduce and characterize $\mathfrak{S}\lambda$-maps in Section 4, where $\lambda$ stands for the Lindel\"{o}fication of a frame. We then generalize the definition of $\mathfrak{S}\lambda$-maps obtaining a whole range of classes of localic maps and two characterizing theorems for special types of frames. Finally, in Section 5 we look at special cases when embeddings of sublocales are $\mathfrak{S}\beta$- or $\mathfrak{S}\lambda$-maps.

	\section{Notation and Terminology}
	
	\subsection{The categories of frames and locales}
	
	Our notation and terminology for
	frames and locales is that of \cite{PP12}. We recall here some of the basic notions involved. A {\it frame} $L$ is a complete lattice in which
	\begin{equation}\label{frmdist}
		a\wedge \tbigvee S=\tbigvee \{a\wedge b\mid b\in S\}\quad \mbox{ for any }a\in L\mbox{ and }S\subseteq L.
	\end{equation}
	A {\em frame homomorphism} preserves all joins (in particular, the bottom element $0$ of the lattice) and all finite meets (in particular, the top element 1).
	
	In any frame $L$, the mappings
	$(x\mapsto (a\wedge x))\colon L\to L$ preserve suprema and hence they have right Galois adjoints $(y\mapsto (a\to y))\colon L\to L$, satisfying
	\[
	a\wedge x\leq y \quad\text{iff}\quad x\leq a\to y
	\]
	and making $L$ a complete Heyting algebra.
	The {\em pseudocomplement} of $a\in L$ is the element
	$a^*=a\to 0=\tbigvee\{x\mid x\wedge a=0\}$ and we write $L^*:=\left\{a^*\mid a\in L\right\}$. A {\em regular element} of $L$ is an element of the form $a^*$ for some $a$ (equivalently, an element $a$ such that $a^{**}=a$).
	
	The {\em rather below relation} $\prec$ in  $L$  is defined by $b\prec a$ iff $b^*\vee a=1$. A frame is {\em regular} if $a=\tbigvee\{b\in L\mid b\prec a\}$ for every $a\in L$, and it is {\em normal} if for any $a,b\in L$ such that $a\vee b=1$,  there is a pair $u,v\in L$ such that $u\wedge v=0$ and $a\vee u=b\vee v=1$.
	
	The category of locales and localic maps is the dual category of frames and frame homomorphisms. Thus a {\it locale} is a frame and {\em localic maps} can be represented by the
	(uniquely defined) right adjoints $f=h_*\colon L\to M$ of frame homomorphisms  $h\colon M\to L$. They are precisely the meet preserving maps
	$f\colon L\to M$ such that $f(h(a)\to b)=a\to f(b)$ and $f(a)=1\Rightarrow a=1$.\\
	Throughout the paper we will always use $f$ to refer to localic maps and $h$ will always denote the frame homomorphism left adjoint of $f$.

	\subsection{The coframes of sublocales of a locale}
	A {\em sublocale} of a locale $L$ is a subset $S\subseteq L$ closed under arbitrary meets such that
	\[\forall x\in L\ \ \forall s\in S \ \ (x\to s\in S).\]
	They are precisely the subsets of $L$ for which the embedding $j\colon S\hookrightarrow L$ is a localic map. The system $\Ss(L)$ of all sublocales of $L$, partially ordered by inclusion, is a {\em coframe}, that is, its dual lattice is a frame \cite[Theorem~III.3.2.1]{PP12}.  Infima and suprema are given by
	\[
	\tbigwedge_{i\in J}S_i=\tbigcap_{i\in J}S_i, \quad \tbigvee_{i\in J}S_i=\{\tbigwedge M\mid M\subseteq\tbigcup_{i\in J} S_i\}.
	\]
	The least element is the {\em void sublocale} $\OS=\{1\}$ and the greatest element is the entire locale $L$. Since $\Ss(L)$ is a coframe, there is a co-Heyting operator $S\smallsetminus T$ given by the formula $\bigcap\left\{A\mid S\subseteq T\vee A\right\}$ and characterized by the condition
	\[S\smallsetminus T\subseteq A\ \Leftrightarrow\ S\subseteq T\vee A.\]
	In particular, the co-pseudocomplement (usually called {\em supplement}) of $S$ is the sublocale $S^\#=L\smallsetminus S$ and we have that for any $S, T\in \Ss(L)$,
	\begin{equation}\label{fsuppl}
		S\vee T=L \ \Leftrightarrow\ S^\#\subseteq T\quad \text{ and }\quad S\cap T=\OS\ \Rightarrow\ S\subseteq T^\# \end{equation}
	(we refer to \cite{FPP} for more information on supplements in $\Ss(L)$). 
	
	For each $a\in L$, the sublocales
	\[
	\mathfrak{c}_L(a)=\ur  a=\{x\in L\mid x\ge a\}\quad \text{ and }\quad \mathfrak{o}_L(a)=\{a\to b\mid b\in L\}
	\]
	are the \emph{closed} and \emph{open} sublocales of $L$, respectively (that we shall denote simply by $\mathfrak{c}(a)$ and $\mathfrak{o}(a)$ when there is no danger of confusion). For each $a\in L$, $\mathfrak{c}(a)$ and $\mathfrak{o}(a)$ are
	complements of each other in $\mathsf{S}(L)$ and satisfy the identities
	\begin{equation}\label{closedsub}
		\tbigcap_{i\in J} \mathfrak{c}(a_i)=\close(\tbigvee_{i\in J} a_i),\quad \close(a)\vee\close(b)=\close(a\wedge b),
	\end{equation}
	\begin{equation}\label{opensub}\tbigvee_{i\in J}\open(a_i)=\open(\tbigvee_{i\in J} a_i) \quad\mbox{ and }\quad \open(a)\cap \open(b)=\open(a\wedge b).
	\end{equation}
	We denote by $\open(L)$ and $\close(L)$ the classes of all open and closed sublocales of $L$.
	
	Let $j_S^*$ be the left adjoint of the localic embedding $j_S\colon S\hookrightarrow L$, given by $j_S^*(a)=\tbigwedge \{s\in S\mid s\ge a\}$.
	The closed (resp. open) sublocales $\close_S(a)$ (resp. $\open_S(a)$) of $S$ ($a\in S$) are precisely the intersections $\close(a)\cap S$ (resp. $\open(a)\cap S)$
	and we have, for any $a\in L$,
	\begin{equation}\label{closedS}
		\close(a) \cap S = \close_S(j_S^*(a))\quad\mbox{and}\quad\open(a) \cap S = \open_S(j_S^*(a)).
	\end{equation}
	
	The {\em closure} $\overline S$ of a sublocale $S$ is  the smallest closed sublocale containing $S$, and the {\em interior} $S^\circ$ is
	the largest open sublocale contained in $S$. There is a particularly simple formula for the closure:
	\begin{equation}\label{closure}
		\overline S=\close(\tbigwedge S).
	\end{equation} Hence $\overline{\open(a)}=\close(a^*)$ and, consequently, $\close(a)^\circ=\open(a^*)$.
	More generally, for every sublocale $S$, $\overline{S^\#}=(S^\circ)^\#$ \cite[Section 4]{FPP}. A sublocale is {\em regular closed} (resp. {\em regular open}) if it is of the form $\close (a^*)$ (resp. $\open (a^*)$) for some $a\in L$.
	Note that, this allows us to look at the rather below relation in terms of sublocales:
	\begin{equation}\label{rb}
		b\prec a \iff \overline{\open (b)}\subseteq \open (a)\iff \close(a)\subseteq \close(b)^\circ.
	\end{equation}
	
	We say a sublocale is \emph{clopen} if it is both closed and open. A clopen sublocale is of the form $\close(a)$ (resp. $\open (a)$) for some \emph{complemented} $a\in L$ (that is, $a^*\vee a=0$).
	A sublocale $S$ of $L$ is {\em dense} if $\overline{S}=L$, equivalently, by \eqref{closure}, if $0_L\in S$. 
	
	\subsection{Image-Preimage adjunction of localic maps}
	
	Let $f\colon L\to M$ be a localic map. For any sublocale $S$ of $L$, its set theoretic image $f[S]$ is a sublocale of $M$. On the other hand, the set theoretic preimage $f^{-1}[T]$ of a sublocale $T$ of $M$ may not be a sublocale of $L$.
	But since $f$ is meet preserving, $f^{-1}[T]$ is closed under meets and thus there exists the
	largest sublocale of $L$ contained in $f^{-1}[T]$, usually denoted as $f_{-1}[T]$ (\cite[III.4]{PP12}). This is the {\em localic preimage} of $T$
	that provides the image/preimage Galois adjunction
	\[\xymatrixcolsep{5pc}\xymatrix{\Ss(L)\ar@/^/[r]^{f[-]}_\bot &\Ss(M)\ar@/^/[l]^{f_{-1}[-]}}\]
	between coframes $\Ss(L)$ and $\Ss(M)$ of sublocales of $L$ and sublocales of $M$, respectively.
	The right adjoint $f_{-1}[-]$ is a coframe homomorphism that preserves complements while $f[-]$ is a colocalic map (\cite[III.9]{PP12}). The adjunction between these two maps gives the following equivalence
	\begin{equation}
		f[S]\subseteq T\iff S\subseteq f_{-1}[T]
	\end{equation}
	for every $S\in \Ss(L)$ and $T\in \Ss(M)$.
	Localic preimages of closed (resp. open) sublocales are closed
	(resp. open). More specifically, denoting by $h$ the frame homomorphism left adjoint to $f$, we have
	\begin{equation}\label{preimage.close.open}
		f_{-1}[\close_M(a)]= \close_L(h(a))\quad \text{and}\quad f_{-1}[\open_M(a)]= \open_L(h(a))
	\end{equation}
	for any $a\in M$.
	
	In the particular case of the embedding $j\colon S \hookrightarrow L$ of a sublocale $S$ of $L$ the localic preimages are computed as the intersection; that is, $j_{-1}[T]=S\cap T$  for every $T\in \Ss(L)$.


	\subsection{Zero and cozero sublocales}
	An  $a\in L$ is a {\em cozero element}  if $a=\tbigvee_{n=1}^\infty a_n$ for some $a_n\pprec a$ ($n=1,2,\ldots$) where the {\em completely below relation} $\pprec$  is the interpolative modification of the rather below relation: $b\pprec a$ if and only if there exists a subset $\{a_q\mid q\in [0,1]\cap \QQ\}\subseteq L$ with $a_0=b$
	and $a_1=a$ such that $a_p\prec a_q$ whenever $p<q$. Recall also that a frame $L$ is said to be \emph{completely regular} if $a=\tbigvee\{b\in L\mid b\pprec a\}$ for every $a\in L$.\\
	Cozero elements form a $\sigma$-frame $\coz~L\subseteq L$, that is, a lattice  in which all {\em countable} 
	subsets have a join such that the distributivity law \eqref{frmdist}
	holds for any countable $S$. Furthermore, $\coz~L$ is a {\em normal} $\sigma$-frame, that is,  $a\vee b=1$ ($a,b\in \mathrm{Coz}\,L$) implies there exist $c$ and $d$ in $\coz~L$ such that $a\vee c=1=b\vee d$ and $c\wedge d=0$  \cite{BG}.
	
	The {\em zero sublocales} (resp. {\em cozero sublocales}) are the $\close(a)$ (resp. $\open(a)$) with $a\in \mathrm{Coz}\,L$.
	We denote by \[\mathsf{ZS}(L) \quad\mbox{ and }\quad \mathsf{CoZS}(L)\] the classes of zero and cozero sublocales, respectively. The former class is a sub-$\sigma$-coframe of $\Ss(L)$ while the latter is a sub-$\sigma$-frame. Since frame homorphisms preserve cozero elements, localic preimages of cozero (resp. zero) sublocales are cozero (resp. zero) sublocales.
	
	Furthermore, a sublocale $S$ is both a zero and a cozero sublocale iff it is clopen.

	\subsection{Completely separated sublocales}
	Two sublocales $S$ and $T$ of $L$ are \emph{completely separated in L} (\cite{GK,BW})\footnote{Complete separation is usually defined using real-valued functions; for simplicity we opt to use one of its characterizations as a defintion (see for instance \cite[2.7]{tesisA} for all the details).} if they are contained in disjoint zero sublocales of $L$; that is, if there exist $Z_1,Z_2\in \Sz(L)$ such that 
	\[S\subseteq Z_1,\quad T\subseteq Z_2 \quad \text{and}\quad Z_1\cap Z_2=\OS.\]
	
	\begin{remark}\label{cs.closure}
		\begin{enumerate}
			\item If $S\subseteq R$, $T\subseteq D$ and $R$ and $D$ are completely separated in $L$, then so are $S$ and $T$.
			\item $S$ and $T$ are completely separated if and only if $\overline{S}$ and $\overline{T}$ are completely separated. 
			\item Disjoint zero sublocales are completely separated. In particular, disjoint clopen sublocales are completely separated.
		\end{enumerate}
	\end{remark}

	\begin{proposition}\cite[5.4.2]{GPP-Notes}\label{cs.cb}
		Let $a$ and $b$ be elements of a locale $L$, $a\cb b$ if and only if $\open(a)$ and $\close(b)$ are completely separated in $L$.  
	\end{proposition}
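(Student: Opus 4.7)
My plan is to move between the interpolating-scale definition of $\cb$ and the normality of the $\sigma$-frame $\coz L$, constructing the separating zero sublocales in one direction and the scale in the other.

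For $(\Rightarrow)$, fix a scale $\{a_q\}_{q\in\QQ\cap[0,1]}$ with $a_0=a$, $a_1=b$, and $a_p\prec a_q$ for $p<q$. I propose the two elements
\[
c := \bigvee_{q\in\QQ\cap(0,1)} a_q^{*}, \qquad d := \bigvee_{q\in\QQ\cap(0,1)} a_q,
\]
and aim to check that $c,d\in\coz L$, $a\wedge c=0$, $d\leq b$, $c\vee d=1$. The three algebraic identities drop out of the scale: $a=a_0\leq a_q$ gives $a\wedge a_q^{*}=0$; $a_q\prec a_1=b$ gives $a_q\leq b$; and for any $q\in\QQ\cap(0,1)$, picking a rational $q'\in(0,q)$ yields $a_{q'}^{*}\vee a_q=1\leq c\vee d$. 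The sublocale translation $\open(a)\subseteq\close(c)\Leftrightarrow a\wedge c=0$ and $\close(b)\subseteq\close(d)\Leftrightarrow d\leq b$, together with $\close(c)\cap\close(d)=\close(c\vee d)=\OS$, then exhibits $\close(c)$ and $\close(d)$ as the desired disjoint zero sublocales. Cozero-ness of $d$ follows by taking a cofinal increasing sequence $q_n\nearrow 1$ in $\QQ\cap(0,1)$, reparameterising the sub-scale indexed by $[q_n,q_{n+1}]\cap\QQ$ to obtain $a_{q_n}\cb a_{q_{n+1}}$, and invoking the little lemma that a countable join $\bigvee_n c_n$ with $c_n\cb c_{n+1}$ is a cozero element. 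Cozero-ness of $c$ is symmetric, using $a_p\prec a_q\Rightarrow a_q^{*}\prec a_p^{*}$ to chain the sequence $(a_{q_n}^{*})$ for $q_n\searrow 0$ via $\cb$.

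For $(\Leftarrow)$, write the disjoint zero sublocales as $\close(c_1)$ and $\close(c_2)$ with $c_1,c_2\in\coz L$. The hypotheses translate into $c_1\wedge a=0$, $c_2\leq b$, and $c_1\vee c_2=1$. Applying the normality of $\coz L$ to the cover $c_1\vee c_2=1$ produces $e,e'\in\coz L$ with $c_1\vee e=1=c_2\vee e'$ and $e\wedge e'=0$; the identity $e=e\wedge(c_2\vee e')=(e\wedge c_2)\vee(e\wedge e')=e\wedge c_2$ yields $e\leq c_2$. The witnesses then give $a\prec e$ (from $c_1\leq a^{*}$ and $c_1\vee e=1$) and $e\prec b$ (from $e'\leq e^{*}$, $c_2\vee e'=1$, and $c_2\leq b$). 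Setting $a_{1/2}=e$, I iterate: $a_{1/4}$ comes from the same procedure applied to $(c_1,a_{1/2})$ in place of $(c_1,c_2)$, while $a_{3/4}$ comes from the dual procedure using the witness $e'$ applied to $(a_{1/2},c_2)$. Continuing inductively over the dyadic rationals produces the scale realising $a\cb b$.

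The main obstacle is the cozero-ness verification in the forward direction; once the subsidiary lemma about chained countable joins is in hand, the rest is arithmetic inside $L$. The reverse direction is essentially the standard Urysohn construction adapted to the normal $\sigma$-frame $\coz L$, with the only subtle point being the careful bookkeeping of the normality witnesses through the dyadic recursion so that both a $\prec$-relation upward to the next element and a $\prec$-relation downward from the previous element are preserved at every step.
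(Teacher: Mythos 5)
Your proof is correct, and it is worth pointing out that the paper itself gives no argument for this proposition: it is imported verbatim from \cite[5.4.2]{GPP-Notes}, with a footnote indicating that complete separation is usually handled there via real-valued functions. So what you have produced is a self-contained, purely lattice-theoretic proof of a fact the paper only cites, using nothing beyond the scale definition of $\cb$ and the normality of the $\sigma$-frame $\coz~L$ (both of which the paper records). Checking the details: in the forward direction the translations $\open(a)\subseteq\close(c)\iff a\wedge c=0$, $\close(b)\subseteq\close(d)\iff d\leq b$ and $\close(c)\cap\close(d)=\close(c\vee d)$ are all right, and your ``little lemma'' is immediate because $x\cb y\leq z$ implies $x\cb z$ (replace the top of the scale by $z$), so $a_{q_n}\cb a_{q_{n+1}}\leq d$ gives $a_{q_n}\pprec d$ and $d=\bigvee_n a_{q_n}\in\coz~L$ by definition; the reparameterisation of the subscale on $[q_n,q_{n+1}]\cap\QQ$ needs no back-and-forth argument, since the affine map $t\mapsto q_n+t(q_{n+1}-q_n)$ already produces a $\QQ\cap[0,1]$-indexed subfamily, and the dual chain for $c$ works because $a_p\prec a_q$ implies $a_q^*\prec a_p^*$. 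In the backward direction your bookkeeping is exactly what makes the Urysohn-type recursion self-sustaining: each application of normality returns $e\leq c_2$ and $e'\leq c_1$, so the witness pairs $(c_1,e)$ for the lower half and $(e',c_2)$ for the upper half satisfy the same three conditions ($a\wedge c_1=0$, witness below the top element, join equal to $1$) and the induction closes. The only step you compress is the last one: the construction yields a scale indexed by the dyadic rationals of $[0,1]$, and a full write-up should add the one-line remark that such a scale can be reindexed over $\QQ\cap[0,1]$ (two countable dense linear orders with endpoints are order-isomorphic), together with the observation that $x\prec y\leq z$ and $x\leq y\prec z$ imply $x\prec z$, which gives $a_p\prec a_q$ for all non-adjacent dyadics. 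With those two sentences added, your argument is a complete and arguably more elementary proof than the functional one behind the citation, and it has the small bonus of exhibiting the interpolating scale as consisting of cozero elements.
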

	
	\begin{remark}\label{cs.normal}
		If $S$ and $T$ are completely separated in $L$ there exist $a,b\in \coz~L$ such that 
		\[S\subseteq \close(a),\quad T\subseteq \close(b) \quad \text{and}\quad \close(a)\cap \close(b)=\OS.\]
		Then, since $\coz~L$ is normal (which implies $\prec=\cb$) and $a\vee b=1$,  there exist $u,v\in \coz$ such that $u\wedge v=0$, $v\cb a $ and $u\cb b$. Hence, 
		\[S\subseteq \close(a)\subseteq \open(u),\quad T\subseteq \close(b)\subseteq \open(u)\quad\text{and}\quad \open(u)\cap \open(v)=\OS.\]
		Moreover, by Proposition~\ref{cs.cb}, $\open(u)$ and  $\close(b)$ are completely separated, and so are $\open(v)$ and  $\close(a)$. If we repeat this argument, then we can obtain another cozero sublocale $\open (y)$ such that $\open(u)$ and $\open (y)$ are completely separated and $S\subseteq \open (u)$ and $T\subseteq \open (y)$.
	\end{remark}
	
	\subsection{Special classes of localic maps}
	A localic map $f\colon L\to M$ is said to be 
	\begin{enumerate}
		\item {\em dense} if $f[L]$ is a dense sublocale of $M$. Equivalently, if $f(0_L)=0_M$. 
		\item {\em closed} if the image $f[S]$ is closed for each closed $S\in \Ss(L)$. Equivalently, if $f(a\vee h(b))=f(a)\vee b$ for every $a\in L$ and $b\in M$.
		\item {\em $\z$-closed} if the image $f[S]$ is closed for each zero sublocale $S\in \Ss(L)$. Equivalently, if $f(a\vee h(b))=f(a)\vee b$ for every $a\in \coz~L$ and $b\in M$.
		\item {\em $\rc$-closed} if the image $f[S]$ is closed for each regular closed sublocale $S\in \Ss(L)$. Equivalently, if $f(a\vee h(b))=f(a)\vee b$ for every $a\in L^*$ and $b\in M$.
		\item {\em $\z$-preserving} if the image $f[S]$ is a zero sublocale for each zero sublocale $S\in \Ss(L)$. Equivalently, if $f(a\vee h(b))=f(a)\vee b$ for every $a\in \coz~L$ and $b\in M$ and $f(a)\in \coz~M$ (\cite[Section~9]{parallel}). 
		\item {\em $\mathsf{reg}_\close$-preserving} if the image $f[S]$ is a regular closed for each regular closed sublocale $S\in \Ss(L)$. Equivalently, if $f(a\vee h(b))=f(a)\vee b$ for every $a\in L^*$ and $b\in M$ and $f(a)\in M^*$ (\cite[Section~9]{parallel}). 
		\item \emph{proper} if it is closed and preserves directed joins.
	\end{enumerate}
	Clearly, every closed map is $\z$-closed and $\rc$-closed. Moreover, $\z$-preserving (resp. $\rc$-preserving) maps are precisely the $\z$-closed (resp. $\rc$-preserving) maps such that $f(a)\in \coz~M$ for every $a\in \coz~L$ (resp. $f(a)\in M^*$ for every $a\in L^*$).
	\begin{remark}
		In general, 
		$\overline{f[S]}=\close_M(\tbigwedge f[S])=\close_M(f(\tbigwedge S))$, so $\overline{f[\close_L(a)]}=\close_M(f(a))$ for every $a\in L$. Hence, if $f[\close_L(a)]$ is closed for some $a\in L$, we have that $f[\close_L(a)]=\overline{f[\close_L(a)]}=\close_M(f(a))$.
	\end{remark}
	A localic map $f \colon L \to M$ is said to be {\em open} if the image $f[S]$ is open for each open $S\in \Ss(L)$. Other variants of openness have been studied in \cite{BanaPultr} and \cite{maps}, for instance nearly openness and $\co$-openness. Here we will use these notions and other variants. We say $f$ is 
	\begin{enumerate}
		\item {\em nearly open } if $h(a^*)=h(a)^*$ for every $a\in M$. Equivalently, $f_{-1}[\overline{\open(a)}]=\overline{f_{-1}[\open(a)]}$ for every $a\in L$.
		\item {\em $\co$-nearly open } if $h(a^*)=h(a)^*$ for every $a\in \coz~M$. Equivalently, $f_{-1}[\overline{\open(a)}]=\overline{f_{-1}[\open(a)]}$ for every $a\in \coz~L$.
		\item {\em $\ro$-nearly open } if $h(a^*)=h(a)^*$ for every $a\in M^*$. Equivalently, $f_{-1}[\overline{\open(a)}]=\overline{f_{-1}[\open(a)]}$ for every $a\in L$
		(this is condition $\mathbf{B}$ in \cite{BanaPultr}). 
		\item {\em $\co$-open} if $f[S]$ is open for each cozero sublocale $S\in \Ss(L)$.
		\item {\em $\ro$-open} if $f[S]$ is open for each regular open sublocale $S\in \Ss(L)$.
		\item {\em $\co$-preserving} if the image $f[S]$ is a cozero sublocale for each cozero sublocale $S\in \Ss(L)$. 
		\item $f$ is {\em $\mathsf{reg}_\open$-preserving} if the image $f[S]$ is regular open for each regular open sublocale $S\in \Ss(L)$. 
	\end{enumerate}
	
	Every open map is nearly open, and every nearly open map is $\co$-nearly open and $\ro$-nearly open. Furthermore, every open and every $\co$-preserving (resp. $\ro$-preserving) map is $\co$-open (resp. $\ro$-open).
	Note that $f$ is nearly open (resp. $\ro$-nearly open) if and only if  $f[\open_L(a)]\subseteq \overline{f[\open_L(a)]}^\circ$ for every $a\in L$ (resp. $a\in L^*$).

	

	\section{$\Gamma$-maps}
	
	Following the notation in \cite{open}, but adopting it to the localic setting, suppose $\gamma\colon \mathsf{Loc}\to \mathsf{CRegLoc}$ is a functor such that for each frame $L$ there is a localic map $\gamma_L\colon L\to \gamma L$ with the property that every localic map $f\colon L\to M$ lifts; that is, there is a localic map $f^\gamma\colon \gamma L\to \gamma M$ such that $\gamma_M f=f^\gamma\gamma_L$. If we consider the respective frame homorphisms (that is the left adjoints of the localic maps), we obtain the following diagram
	\begin{equation}\label{gamma1}
		\xymatrixcolsep{5pc}\xymatrixrowsep{5pc}\xymatrix{L \ar@<-1ex>[r]_{f}^\bot \ar@<1ex>[d]^{\gamma _L}_\dashv & M \ar@<-1ex>[l]_h\ar@<1ex>[d]^{\gamma _M}_\dashv\\ \gamma L \ar@<1ex>[u]^{\gamma _L^*} \ar@<-1ex>[r]_{f^\gamma}^\bot & \ar@<-1ex>[l]_{h^\gamma}\gamma M \ar@<1ex>[u]^{\gamma _M^*}} 
	\end{equation}
	where frame homomorphisms and localic maps commute respectively; that is  
	\begin{equation}\label{gamma2}
		h\cdot \gamma _M^*= \gamma _L^*\cdot h^\gamma\quad \text{and}\quad \gamma_M \cdot f = f^\gamma \cdot \gamma_L.
	\end{equation}
	We say a localic map $f\colon L\to M$ is a \emph{$\gamma$-map} if $h^\gamma\cdot \gamma_M=\gamma_L\cdot h$. Note that if $a\in M$ is such that $h^\gamma (\gamma_M(a))=\gamma_L(h(a))$, by taking the respective closed sublocales, this is equivalent to
	\begin{align*}
		f_{-1}^\gamma\left[\overline{\gamma _M[\close_M(a)]}^{\gamma M}\right] &=f_{-1}^\gamma[\close_{\gamma M} (\gamma_M(a))]=\close_{\gamma L}(h^\gamma (\gamma_M(a))) =\close_{\gamma L}(\gamma_L(h(a)))\\
		&= \overline{\gamma _L[\close_L(h(a))]}^{\gamma L}=\overline{\gamma _L\left[f_{-1}[\close_M(a)])\right]}^{\gamma L}.
	\end{align*}
	
	In general, we always have that $h^\gamma (\gamma_M(a))\leq \gamma_L(h(a))$ for any $a\in M$. Indeed, using the fact that $\gamma_L^*\dashv\gamma_L$ and \eqref{gamma2} we get
	\begin{align*}
		h^\gamma (\gamma_M(a))\leq \gamma_L(h(a)) &\iff \gamma_L^*(h^\gamma (\gamma_M(a)))\leq \gamma_L(h(a))\\
		&\iff  h(\gamma_M^*(\gamma_M(a)))\leq h(a)
	\end{align*}
	and $\gamma_M^*(\gamma_M(a))\leq a$ since $\gamma_M^*\dashv\gamma_M$.
	Equivalently, it is always true that 
	\[\overline{\gamma _L\left[f_{-1}[\close_M(a)])\right]}^{\gamma L}\subseteq f_{-1}^\gamma\left[\overline{\gamma _M[\close_M(a)]}^{\gamma M}\right].\]
	
	Instead of working only with elements and closed sublocales, we would like to extend the definition of $\gamma$-maps to other classes of sublocales. A \emph{selection function} on sublocales is a function which assigns to each locale $L$ a subset $\mathfrak{S}L$ of $\Ss(L)$. The standard selections we will use in this paper are taking closed sublocales, open sublocales, zero sublocales, cozero sublocales, regular closed sublocales and regular open sublocales. We denote them respectively as follows:
	\[\mathfrak{S}_\close \quad \mathfrak{S}_\open\quad\mathfrak{S}_\Sz\quad \mathfrak{S}_\Sc\quad \mathfrak{S}_{\mathsf{reg}\close} \quad \mathfrak{S}_{\mathsf{reg}\open}.\]
	\begin{definition}
		Given a selection function on sublocales $\mathfrak{S}$, we say a localic map $f\colon L\to M$ is a \emph{$\mathfrak{S}\gamma$-map} if 
		\begin{equation*}
			\overline{\gamma _L\left[f_{-1}[S]\right]}^{\gamma L}= f_{-1}^\gamma\left[\overline{\gamma _M[S]}^{\gamma M}\right].
		\end{equation*}
		holds for every $S\in \mathfrak{S}M$. 
	\end{definition}
	\begin{remark}\label{remark.g}
		Note that 
		\begin{align*}
			\overline{\gamma _L\left[f_{-1}[S]\right]}^{\gamma L}= \close_{\gamma L}\left(\tbigwedge \gamma_L[f_{-1}[S]]\right)= \close_{\gamma L}\left(\gamma_L(\tbigwedge f_{-1}[S])\right)
		\end{align*}
		and 
		\begin{align*}
			f_{-1}^\gamma\left[\overline{\gamma _M[S]}^{\gamma M}\right]=f_{-1}^\gamma\left[\close_{\gamma M}\left(\tbigwedge \gamma_M[S]\right)\right]=f_{-1}^\gamma\left[\close_{\gamma M}\left(\gamma_M(\tbigwedge S)\right)\right]=\close_{\gamma L}\left(h^\gamma(\gamma_M(\tbigwedge S))\right).
		\end{align*}
		Thus, $f$ is a $\mathfrak{S}\gamma$-map if and only if
		\[h^\gamma(\gamma_M (\tbigwedge S))= \gamma_L(\tbigwedge f_{-1}[S])\]
		For any sublocale $S$ of $M$ the inclusion 
		\begin{equation*}
			\overline{\gamma _L\left[f_{-1}[S]\right]}^{\gamma L}\subseteq f_{-1}^\gamma\left[\overline{\gamma _M[S]}^{\gamma M}\right].
		\end{equation*}
		always holds.  Indeed, due to the image/preimage adjunction, $f[f_{-1}[S]]\subseteq S$ thus
		\[h(\tbigwedge S)\leq h(\tbigwedge f[f_{-1}[S]])= h(f(\tbigwedge f_{-1}[S]))\leq \tbigwedge f_{-1}[S].\]
		Hence, 
		\begin{align*}
			h(\gamma_M^*(\gamma_M(\tbigwedge S)))\leq h(\tbigwedge S)\leq \tbigwedge f_{-1}[S] &\iff \gamma_L^*(h^\gamma(\gamma_M(\tbigwedge S)))\leq \tbigwedge f_{-1}[S]\\
			&\iff h^\gamma (\gamma_M(\tbigwedge S))\leq \gamma_L (\tbigwedge f_{-1}[S])\\
			&\iff \overline{\gamma _L\left[f_{-1}[S]\right]}^{\gamma L}\subseteq f_{-1}^\gamma\left[\overline{\gamma _M[S]}^{\gamma M}\right].
		\end{align*}
	\end{remark}
	
	\subsection{Two important $\gamma$-fications}
	The two $\gamma$-fications we will be working with are the Stone-\v{C}ech compactification $\beta$ and the Lindel\"{o}fication $\lambda$.  In general we do not require the locales to be completely regular. Both constructions still work with the exception that the maps $\beta_L$ and $\lambda_L$ may not be embeddings. When complete regularity is required it will be specified throughout the paper. We give a brief recap of the construction of these two $\gamma$-fications.
	
	Let us recall from \cite{BM} the Stone-\v{C}ech compatification of a frame $L$. An ideal $I$ of $L$ is regular if for every $a\in I$ there is a $b\in I$ such that $a\cb b$. For any $a\in L$, the ideal $\ddown a=\left\{x\in L\mid x\cb a\right\}$ is regular. The lattice $\mathfrak{R}(L)$ of all regular ideals of $L$ is a compact regular frame. Recall that a frame is \emph{compact} if $1=\tbigvee S$ implies $1=\tbigvee F$ for a finite $F\subseteq S$. The Stone-\v{C}ech compactification of $L$ is the dense localic map
	\begin{align*}
		\beta_L\colon L &\to \beta L=\mathfrak{R}(L)\\
		a&\mapsto \ddown a.
	\end{align*}
	The frame homorphism $\beta_L^*\colon \beta L\to L$ is given by taking joins of ideals. 
	If $L$ is completely regular $\beta_L$ is one-one localic map. 
	The functor $\beta\colon \mathsf{Loc} \to \mathsf{CRegKLoc}$ shows that compact regular locales are reflective in the category of locales. 
	For any localic map $f\colon L\to M$ its {\em Stone extension} is the localic map $ f^\beta \colon \beta L\to \beta M$ given by the functor $\beta$. The frame homomorphism $h^\beta\colon \beta M\to \beta L$, left adjoint of $f^\beta$, is easily computed by the formula:
	$h^\beta(I)= \downarrow h[I]$ 
	for any regular ideal $I$ of $M$. 
	The reflection gives the following diagram 
	\begin{equation}\label{diagram.b}
		\xymatrixcolsep{5pc}\xymatrixrowsep{5pc}\xymatrix{L \ar@<-1ex>[r]_{f}^\bot \ar@<1ex>[d]^{\beta _L}_\dashv & M \ar@<-1ex>[l]_h\ar@<1ex>[d]^{\beta _M}_\dashv\\ \beta L \ar@<1ex>[u]^{\beta _L^*} \ar@<-1ex>[r]_{f^\beta}^\bot & \ar@<-1ex>[l]_{h^\beta}\beta M \ar@<1ex>[u]^{\beta _M^*}} 
	\end{equation}
	where frame homomorphisms and localic maps commute respectively.

	Madden and Vermeer \cite{MV} show that through the functor $\lambda\colon \mathsf{Loc}\to \mathsf{CRLinLoc}$ the category of completely regular Lindel\"{o}f locales is reflective in the category of locales. A locale is \emph{Lindel\"{o}f} if $1=\tbigvee S$ implies $1=\tbigvee T$ for some countable $T\subseteq S$. 
	For any locale $L$, $\lambda L$ is the completely regular Lindel\"{0}f frame of all $\sigma$-ideals (that is, the ideals closed under countable joins) of $\coz~L$. For any $a\in L$ the set $\downarrow_{\co}a:=\left\{x\in \coz~L\mid x\leq a\right\}$ is a $\sigma$-ideal of $\coz~L$. The Lindel\"{o}fication and its respective frame homomorphism are given as follows:
	\[\begin{array}{rclcrcl}
		\lambda_L\colon L&\to &\lambda L&\quad & \lambda_L^*\colon \lambda L&\to &L\\
		a&\mapsto &\downarrow_{\co}a& & I&\mapsto &\tbigvee I
	\end{array}\]
	The map $\lambda_L$ is always dense, and it is one-one if $L$ is completely regular. For a localic map $f\colon L\to M$ the frame homorphism $h^\lambda$, left adjoint of the localic map $f^\lambda\colon \lambda L\to \lambda M$ given by $\lambda$, is described as follows:
	\begin{align*}
		h^\lambda\colon \lambda M&\to \lambda L\\
		I&\mapsto  \downarrow_{\co}h[I]
	\end{align*}
	where $\downarrow_{\co}h[I]=\left\{b\in \coz~L\mid b\leq h(x) \text{ for some } x\in I \right\}$.
	The reflection gives the following diagram 
	\begin{equation}\label{diagram.lambda}
		\xymatrixcolsep{5pc}\xymatrixrowsep{5pc}\xymatrix{L \ar@<-1ex>[r]_{f}^\bot \ar@<1ex>[d]^{\lambda _L}_\dashv & M \ar@<-1ex>[l]_h\ar@<1ex>[d]^{\lambda _M}_\dashv\\ \lambda L \ar@<1ex>[u]^{\lambda _L^*} \ar@<-1ex>[r]_{f^\lambda}^\bot & \ar@<-1ex>[l]_{h^\lambda}\lambda M \ar@<1ex>[u]^{\lambda _M^*}} 
	\end{equation}
	where frame homomorphisms and localic maps respectively, commute.
	
	\section{$\mathfrak{S}\beta$-maps}
	
	In this section we will study the $\mathfrak{S}\gamma$-maps when $\gamma$ is the Stone-\v{C}ech compactification. 
	The notion of $\mathfrak{S}\beta$-map (referred only as $\mathfrak{S}$-maps in \cite{maps}) is a generalization of the classical notions of $N$-, $WN$- and $WZ$-maps (\cite{Woods}). 
	$\mathfrak{S}_\close\beta$-maps are the right adjoints of $N$-homomorphisms of \cite{Remote} (or $\beta$-maps in \cite{open,closed}) and $\mathfrak{S}_\Sz\beta$-maps are the right adjoints of $WN$-homomorphisms in \cite{Remote}. For these two selections, the equivalence \ref{1}$\iff$ \ref{3} of the Theorem below gives as a corollary \cite[Lemma 5.2]{Remote} and \cite[6]{Remote}.
	
	\begin{theorem}\label{theorem.b}
		The following are equivalent for a localic map $f\colon L\to M$:
		\begin{enumerate}[label=\textup{(\roman*)}]
			\item\label{1} $f$ is an $\mathfrak{S}\beta$-map.
			\item\label{2} If an open sublocale $\open_L(a)$ is completely separated from $f_{-1}[S]$ for some $S\in \mathfrak{S}M$, then there is an open sublocale $\open_M(b)$ such that $\open_L(a)\subseteq f_{-1}[\open_M(b)]$ and $\open_M(b)$ is completely separated from $S$ in $M$. 
			\item\label{3}For every $a\in L$ such that $a\cb \tbigwedge f_{-1}[S]$ there exists $b\in M$ such that $a\leq h(b)$ and $b\cb\tbigwedge S$.
		\end{enumerate}
	\end{theorem}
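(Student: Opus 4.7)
My plan is to prove the chain $\ref{1}\Leftrightarrow\ref{3}\Leftrightarrow\ref{2}$. The first equivalence is essentially a direct unpacking of Remark~\ref{remark.g} for $\gamma=\beta$, while the second is a translation between the completely below relation and complete separation via Proposition~\ref{cs.cb} and Remark~\ref{cs.closure}.

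For \ref{1}$\Leftrightarrow$\ref{3}, I start from the reformulation in Remark~\ref{remark.g}: $f$ is an $\mathfrak{S}\beta$-map iff $h^{\beta}(\beta_M(\tbigwedge S))=\beta_L(\tbigwedge f_{-1}[S])$ for every $S\in\mathfrak{S}M$, and the inclusion $h^{\beta}(\beta_M(\tbigwedge S))\leq \beta_L(\tbigwedge f_{-1}[S])$ always holds. Using the explicit formulas $\beta_L(x)=\dr x=\{y\in L\mid y\cb x\}$ and $h^{\beta}(I)=\downarrow h[I]$, the reverse inclusion $\beta_L(\tbigwedge f_{-1}[S])\leq h^{\beta}(\beta_M(\tbigwedge S))$ translates into: for every $a\cb \tbigwedge f_{-1}[S]$, there is $b\cb \tbigwedge S$ with $a\leq h(b)$. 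This is precisely \ref{3}.

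For \ref{3}$\Leftrightarrow$\ref{2}, the main tool is the translation dictionary
\[ c\cb d \ \Longleftrightarrow\ \open(c)\text{ and }\close(d)\text{ are completely separated} \]
from Proposition~\ref{cs.cb}, combined with the identity $\overline T=\close(\tbigwedge T)$ from \eqref{closure} and Remark~\ref{cs.closure}(2), which tells us $T$ is completely separated from $U$ iff $\overline T$ is; and \eqref{preimage.close.open}, which gives $f_{-1}[\open_M(b)]=\open_L(h(b))$.

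Thus, assuming \ref{3}: if $\open_L(a)$ is completely separated from $f_{-1}[S]$, then it is completely separated from $\overline{f_{-1}[S]}=\close(\tbigwedge f_{-1}[S])$ by Remark~\ref{cs.closure}(2), so $a\cb \tbigwedge f_{-1}[S]$ by Proposition~\ref{cs.cb}; \ref{3} yields $b\in M$ with $b\cb\tbigwedge S$ and $a\leq h(b)$, and then $\open_L(a)\subseteq \open_L(h(b))=f_{-1}[\open_M(b)]$ while $\open_M(b)$ is completely separated from $\close(\tbigwedge S)=\overline S\supseteq S$ (invoking Remark~\ref{cs.closure}(1)). Conversely, from \ref{2}: given $a\cb\tbigwedge f_{-1}[S]$, Proposition~\ref{cs.cb} says $\open_L(a)$ is separated from $\overline{f_{-1}[S]}$, hence from $f_{-1}[S]$; \ref{2} then furnishes $b\in M$ with $\open_L(a)\subseteq f_{-1}[\open_M(b)]=\open_L(h(b))$ (so $a\leq h(b)$) and $\open_M(b)$ completely separated from $S$, hence from $\overline S=\close(\tbigwedge S)$, giving $b\cb\tbigwedge S$.

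The proof is more of an assembly than a computation: all nontrivial content is already packaged in Remark~\ref{remark.g}, Proposition~\ref{cs.cb} and Remark~\ref{cs.closure}. The only mild subtlety is being careful with the direction of the always-true inclusion in Remark~\ref{remark.g} and making sure that in the (ii)$\Rightarrow$(iii) direction the sublocale $\open_M(b)$ supplied by (ii) is separated from $S$ itself (not $\overline S$), which forces the use of Remark~\ref{cs.closure}(2) to upgrade separation from $S$ to separation from $\overline S=\close(\tbigwedge S)$ before invoking Proposition~\ref{cs.cb}.
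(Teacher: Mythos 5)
Your proof is correct and follows essentially the same route as the paper: the equivalence \ref{1}$\iff$\ref{3} by unpacking Remark~\ref{remark.g} with the explicit formulas for $\beta_L$ and $h^\beta$, and \ref{2}$\iff$\ref{3} by translating via Proposition~\ref{cs.cb}, Remark~\ref{cs.closure} and the identity $f_{-1}[\open_M(b)]=\open_L(h(b))$. The directions of the always-true inclusion and the closure upgrades are handled exactly as in the paper's argument.
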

	\begin{proof}
		\ref{1}$\iff$\ref{3}: By Remark~\ref{remark.g}, $f$ is $\mathfrak{S}\beta$-map if and only if 
		\[\beta_L(\tbigwedge f_{-1}[S])\leq h^\beta(\beta_M(\tbigwedge S)).\]
		Using the definitions of $\beta_L$ and $h^\beta$, one sees that this precisely means that
		\[\left\{y\in L\mid y\cb \tbigwedge f_{-1}[S]\right\}\subseteq \  \downarrow\left\{h(x)\mid x\in M,\ x\cb 0_S\right\}.\]
		Equivalenty, if for every $y\in L$ such that $y\cb \tbigwedge f_{-1}[S]$ there is an $x\in M$ such that $y\leq h(x)$ and $x\cb 0_S=\tbigwedge S$.\\ 
		\ref{2}$\iff$ \ref{3}: It follows from Proposition~\ref{cs.cb} and Remark~\ref{cs.cb}. Indeed, 
		\begin{align*}
			a\cb \tbigwedge f_{-1}[S]&\iff \open_L(a)\text{ is completely separated from } \close_L(\tbigwedge f_{-1}[S])=\overline{f_{-1}[S]}\\
			&\iff \open_L(a)\text{ is completely separated from } f_{-1}[S]\\ 
		\end{align*}
		and
		\begin{align*}
			b\cb \tbigwedge S&\iff \open_M(b)\text{ is completely separated from } \close_M(\tbigwedge S)=\overline{S}\\
			&\iff \open_M(b)\text{ is completely separated from } S.\\
		\end{align*}
		Furhtermore, from equations \eqref{opensub} and \eqref{preimage.close.open}, it is clear that $a\leq h(b)$ if and only if $\open_L(a)\subseteq \open_L(h(b))=f_{-1}[\open_M(b)]$.
	\end{proof}
	
	\begin{remark}\label{cs.beta}
		The fact that we are taking open sublocales in the characterization of $\mathfrak{S}\beta$-maps (Theorem~\ref{theorem.b}\ref{2}) is not relevant. In fact, one can take closed, zero, cozero, regular open or regular closed sublocales. 
		Indeed, by definition and Remark~\ref{cs.normal}, if $S$ and $T$ are completely separated there are two completely separated zero (or cozero) sublocales that contain $S$ and $T$. Furthermore, by Remarks~\ref{cs.normal} and \ref{cs.closure} one can also obtain two completely separated regular closed (or regular open) sublocales that contain $S$ and $T$. 
		Thus we get the following equivalent conditions to Theorem \ref{theorem.b}:
		\begin{itemize}
			\item[(iv)]  $Z$ is completely separated form $f_{-1}[S]$ for some $S\in \mathfrak{S}M$ and $Z\in \Sz(L)$, then there is $Z'\in \Sz(M)$ such that $Z\subseteq f_{-1}[Z']$ and $Z'$ is completely separated from $S$. 
			\item[(v)] $C$ is completely separated form $f_{-1}[S]$ for some $S\in \mathfrak{S}M$ and $C\in \Sc(L)$, then there is $C'\in \Sc(M)$ such that $C\subseteq f_{-1}[C']$ and $C'$ is completely separated from $S$. 
			\item[(vi)]  $\open_L(a^*)$ is completely separated form $f_{-1}[S]$ for some $S\in \mathfrak{S}M$, then there is a regular open sublocale $\open_M(b^*)$ such that $\open_L(a^*)\subseteq f_{-1}[\open_M(b^*)]$ and $\open_M(b^*)$ is completely separated from $S$. 
			\item[(vii)] $\close_L(a^*)$ is completely separated form $f_{-1}[S]$ for some $S\in \mathfrak{S}M$, then there is a regular closed sublocale $\close_M(b^*)$ such that $\close_L(a^*)\subseteq f_{-1}[\close_M(b^*)]$ and $\close_M(b^*)$ is completely separated from $S$. 
			\item[(viii)] $\close_L(a)$ is completely separated form $f_{-1}[S]$ for some $S\in \mathfrak{S}M$, then there is a closed sublocale $\close_M(b)$ such that $\close_L(a)\subseteq f_{-1}[\close_M(b)]$ and $\close_M(b)$ is completely separated from $S$. 
		\end{itemize}
	\end{remark}
	
	\begin{remark}
		\begin{enumerate}
			\item For any two selections $\mathfrak{S}_1$ and $\mathfrak{S}_2$, if $\mathfrak{S}_1\subseteq \mathfrak{S}_2$ (that is, if $\mathfrak{S}_1L\subseteq \mathfrak{S}_2L$ for every frame $L$) then any $\mathfrak{S}_2\beta$-map is an $\mathfrak{S}_1\beta$-map. 
			\item A localic map $f\colon L\to M$ is nearly open precisely when $f_{-1}[\overline{\open(a)}]=\overline{f_{-1}[\open(a)]}$ for every $a\in L$, and two sublocales are completely separated if and only if their closures are completely separated. Thus, if a localic map $f$ is nearly open, then it is an $\mathfrak{S}_{\rc}\beta$-map if and only if it is $\mathfrak{S}_\open\beta$.
		\end{enumerate}
	\end{remark}

	\subsection{More characterizations of variants of normality}
	Let us provide characterizations of variants of normality using $\mathfrak{S}\beta$-maps. 
	A frame $L$ is \emph{$\mathfrak{S}$-normally separated} when every $S\in\mathfrak{S}L$ is completely separated from every closed sublocale disjoint from it. For instance, $\mathfrak{S}_\close$-normally separated locales are precisely the normal locales, and $\mathfrak{S}_\Sz$-normally separated locales correspond to $\delta$-normally separated locales (see for instance \cite{Remote,maps}). 
	A further example is that of almost normal frames. Recall from \cite{perfect,Separation} that a locale $L$ is \emph{almost normal} if for every $a,b^*\in L$ such that $a\wedge b^*=0$ there are $u,v\in L$ such that $a\vee u=1=v\wedge b^*$ and $u\wedge v =1$. Almost normal frames are precisely the $\mathfrak{S}_{\mathsf{reg}\close}$-locales:
	\begin{proposition}
		The following are equivalent for a locale $L$: 
		\begin{enumerate}[label=\textup{(\roman*)}]
			\item\label{an1} $L$ is almost normal.
			\item\label{an2} The rather below relation $\prec$ interpolates (consequently, $\prec=\cb$).
			\item\label{an3} If $\close(a)\cap \close(b^*)=\OS$ then $\close(a)$ and $\close(b^*)$ are completely separated.
			\item\label{an4} If $\close(a)\subseteq\close(b)^\circ$ for $a,b\in L$, then there $x\in L$ such that \[\close(a)\subseteq \open(x)\subseteq\overline{\open(x)} \subseteq \close(b)^\circ.\]
			\item\label{an5} If $\overline{\open(b)}\subseteq\open(a)$ for $a,b\in L$, then there $x\in L$ such that 
			\[\overline{\open(b)}\subseteq \open(x)\subseteq\overline{\open(x)}\subseteq \open(a).\]
		\end{enumerate}
	\end{proposition}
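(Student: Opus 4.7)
The plan is to prove the cycle (i)$\Rightarrow$(ii)$\Rightarrow$(iii)$\Rightarrow$(i), and then obtain the equivalences (ii)$\Leftrightarrow$(iv) and (ii)$\Leftrightarrow$(v) by a direct translation between sublocale inclusions and the rather below relation via~\eqref{rb}.

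For (i)$\Rightarrow$(ii), assume almost normality and take $a\prec b$, i.e.\ $a^*\vee b=1$. Since $a^*$ is a regular element, I apply almost normality to the pair $(b,a^*)$ to obtain $u,v\in L$ with $b\vee u=1$, $v\vee a^*=1$ and $u\wedge v=0$. I then claim $c:=u^*$ interpolates: from $u\wedge v=0$ we get $v\leq u^*$, hence $u^*\vee a^*\geq v\vee a^*=1$, which is $a\prec u^*$; and from $u\leq u^{**}$ we get $b\vee u^{**}\geq b\vee u=1$, which is $u^*\prec b$. Thus $\prec$ interpolates, and by the very definition of $\cb$ as the interpolative modification of $\prec$ this forces $\prec=\cb$.

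For (ii)$\Rightarrow$(iii) I simply translate: $\close(a)\cap\close(b^*)=\OS$ is equivalent to $a\vee b^*=1$, i.e.\ $b\prec a$; by (ii), $b\cb a$, so Proposition~\ref{cs.cb} gives that $\open(b)$ and $\close(a)$ are completely separated, and Remark~\ref{cs.closure}(2) then upgrades this to complete separation of $\overline{\open(b)}=\close(b^*)$ and $\close(a)$. For (iii)$\Rightarrow$(i), start with $a\vee b^*=1$, which is disjointness of $\close(a)$ and $\close(b^*)$; apply (iii) to get complete separation, and then invoke the argument from Remark~\ref{cs.normal} (separation inside cozero elements, normality of $\coz L$) to extract $u,v\in\coz L$ with $a\vee u=1$, $b^*\vee v=1$ and $u\wedge v=0$. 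This is exactly the almost normality clause.

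For (ii)$\Leftrightarrow$(iv) and (ii)$\Leftrightarrow$(v), both inclusions in the hypotheses are, after applying $\overline{\open(b)}=\close(b^*)$ and $\close(b)^\circ=\open(b^*)$, just restatements of $b\prec a$; the conclusions of (iv) and (v) become the existence of an interpolating element $x$, modulo a pseudocomplement. Concretely, if $\prec$ interpolates and $b\prec c\prec a$, then $x:=c^*$ witnesses (iv) (since $a\vee c^*=1$ and $b^*\vee c^{**}\geq b^*\vee c=1$), and $x:=c$ witnesses (v); conversely, any $x$ furnished by (iv) yields the interpolant $x^*$, while any $x$ furnished by (v) is itself an interpolant. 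The main obstacle is purely bookkeeping: keeping the pseudocomplement exchanges $(-)^*$ and $(-)^{**}$ straight, and being careful that regular elements appear in the right slot so that almost normality applies as stated.
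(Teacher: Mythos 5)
Your proposal is correct, and it is organized differently from the paper's proof while drawing on the same toolkit (Proposition~\ref{cs.cb}, Remarks~\ref{cs.closure} and~\ref{cs.normal}, and the translation~\eqref{rb}). The paper disposes of (i)$\iff$(ii) by citing \cite[VII.1.3.2]{Separation}, proves (ii)$\iff$(iii) exactly as you do, and then chains (iii)$\implies$(iv)$\implies$(v)$\iff$(ii), producing the interpolating open sublocale in (iv) from the cozero elements supplied by Remark~\ref{cs.normal}. You instead make the argument self-contained: a direct lattice computation gives (i)$\implies$(ii) (the $u^*$ interpolant; the step ``$\prec$ interpolates, hence $\prec=\cb$'' is asserted at the same level of detail the paper allows itself, since $\cb$ is by definition the interpolative modification of $\prec$), and you close the cycle with (iii)$\implies$(i), which is where Remark~\ref{cs.normal} enters for you. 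The equivalences of (iv) and (v) with (ii) then become pure bookkeeping with~\eqref{rb} and pseudocomplements (witnesses $x=c^*$, resp.\ $x=c$, and conversely $x^*$, resp.\ $x$), which is slightly more elementary than the paper's cozero-based step (iii)$\implies$(iv), at the price of proving (i)$\implies$(ii) by hand where the paper leans on the literature. One remark: you tacitly read the definition of almost normality in its intended form, namely $a\vee b^*=1$ implies the existence of $u,v$ with $u\wedge v=0$ and $a\vee u=1=b^*\vee v$; the displayed definition in the paper contains typographical slips, and your reading agrees with the cited sources, so your (i)$\implies$(ii) and (iii)$\implies$(i) prove the right statements.
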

	\begin{proof}
		\ref{an1}$\iff$\ref{an2} is proved in \cite[VII.1.3.2]{Separation}.\\
		\ref{an2}$\iff$\ref{an3}: It suffices to note two facts. Firstly, recall that $\close(a)\cap \close(b^*)=\OS$ if and only if $a\vee b^*=1$, that is $b\prec a$. Secondly, by \ref{an2}, $b\cb a$ means $\open (b)$ and $\close(a)$ are completely separated (recall Proposition~\ref{cs.cb}). Equivalently, by Remark~\ref{cs.closure}, $\overline{\open(b)}=\close(b^*)$ and $\close(a)$ are completely separated. \\
		\ref{an3}$\implies$\ref{an4}: Let $\close(a)\subseteq \close(b)^\circ$ for some $a,b\in L$, then $\close(a)$ and $\close(b^*)$ are disjoint; by assumption, they are completely separated. Thus, by Remark~\ref{cs.normal}, there are $x,y\in \coz~L$ such that $\close(a)\subseteq \open (y)$, $\close(b^*)\subseteq \open(x)$ and $\open (x)\cap \open (y)=\OS$. Hence, \[\close_L(a)\subseteq \open _L(y)\subseteq \overline{\open (y)}\subseteq \close_L(x)\subseteq \close(b)^\circ.\] 
		\ref{an4}$\implies$\ref{an5}: Let $\overline{\open(b)}\subseteq\open(a)$ for $a,b\in L$, then $\close(a)\subseteq\close(b)^\circ$. By assumption, there is $y\in L$, such that $\close(a)\subseteq\open (y)\subseteq \overline{\open(y)}\subseteq\close(b)^\circ$. Taking complements we obtain 
		\[\overline{\open(b)}\subseteq \open (y^*)\subseteq \close(y)\subseteq\open(a)\]
		and $\open (y^*)\subseteq\overline{\open (y^*)}\subseteq \close(y)$.\\
		\ref{an5}$\iff$\ref{an2}: This is very easily seen from equation \eqref{rb}: for any $a,b\in L$ 
		\[b\prec a\iff \overline{\open(b)}\subseteq\open(a)\]
		and $\prec$ interpolates if for any $b\prec a$ there is $x\in L$ such that \[b\prec x\prec a \iff \overline{\open(b)}\subseteq\open(x)\subseteq \overline{\open(x)}\subseteq \open (a).\]
	\end{proof}

	The following Theorem was proved in \cite[Theorem 5.3]{maps} and it is a generalization of the results in \cite{Woods}, for sake of completeness we write it below:
	
	\begin{theorem}\label{theorem.bmap}
		Let $\mathfrak{S}$ be a selection on sublocales, and let $M$ be a locale such that every $T\in\mathfrak{S}M$ is complemented. The following are equivalent:
		\begin{enumerate}[label=\textup{(\roman*)}]
			\item $M$ is $\mathfrak{S}$-normally separated. 
			\item Every $\z$-closed localic map $f\colon L\to M$ is an $\mathfrak{S}\beta$-map. 
			\item Every closed localic map $f\colon L\to M$ is an $\mathfrak{S}\beta$-map. 
			\item Every proper localic map $f\colon L\to M$ is an $\mathfrak{S}\beta$-map. 
		\end{enumerate}
	\end{theorem}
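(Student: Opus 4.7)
The plan is to prove the cycle $(i)\Rightarrow(ii)\Rightarrow(iii)\Rightarrow(iv)\Rightarrow(i)$. The two middle implications are immediate, since every proper map is closed and every closed map is $\z$-closed: if every $\z$-closed map into $M$ is an $\mathfrak{S}\beta$-map, so is every closed map, and similarly for proper maps. The content lies in $(iv)\Rightarrow(i)$ and $(i)\Rightarrow(ii)$.

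For $(iv)\Rightarrow(i)$, I would fix $\sigma\in \mathfrak{S}M$ and a closed sublocale $\close_M(a)$ with $\close_M(a)\cap \sigma=\OS$, and consider the inclusion $j\colon \close_M(a)\hookrightarrow M$, which is a closed embedding and hence proper. By $(iv)$, $j$ is an $\mathfrak{S}\beta$-map, and $j_{-1}[\sigma]=\close_M(a)\cap \sigma=\OS$. Applying Theorem~\ref{theorem.b} to the whole sublocale $\close_M(a)$ (trivially completely separated from $\OS$) produces $b\in M$ with $\close_M(a)\subseteq j_{-1}[\open_M(b)]=\open_M(b)\cap \close_M(a)$, so $\close_M(a)\subseteq \open_M(b)$ with $\open_M(b)$ completely separated from $\sigma$; Remark~\ref{cs.closure}(1) then transfers complete separation to $\close_M(a)$ and $\sigma$.

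For $(i)\Rightarrow(ii)$, given a $\z$-closed $f\colon L\to M$ I verify condition (viii) of Remark~\ref{cs.beta}. Let $\sigma\in \mathfrak{S}M$ and let $\close_L(c)$ be completely separated from $f_{-1}[\sigma]$. By Remark~\ref{cs.normal}, pick $u,y\in \coz L$ with $\close_L(c)\subseteq \open(u)$, $f_{-1}[\sigma]\subseteq \open(y)$, and $u\wedge y=0$. From $c\vee u=1$ and $u\wedge y=0$, distributivity gives $y=y\wedge(c\vee u)=y\wedge c$, so $y\le c$ and $h(f(y))\le y\le c$, whence $\close_L(c)\subseteq \close_L(h(f(y)))=f_{-1}[\close_M(f(y))]$. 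On the other hand, since $\sigma$ is complemented so is $f_{-1}[\sigma]$, and $\close(y)\cap f_{-1}[\sigma]=\OS$ gives $\close(y)\subseteq f_{-1}[\sigma]^\#=f_{-1}[\sigma^\#]$; by the image/preimage adjunction, $f[\close(y)]\subseteq \sigma^\#$. Since $y\in \coz L$ and $f$ is $\z$-closed, $f[\close(y)]=\close_M(f(y))$, so $\close_M(f(y))\cap \sigma=\OS$; $\mathfrak{S}$-normal separation of $M$ then gives complete separation of $\close_M(f(y))$ from $\sigma$.

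The principal hurdle is orchestrating this transfer: $\z$-closedness is what turns $f[\close(y)]$ into a genuine closed sublocale $\close_M(f(y))$ of $M$, while the distributivity step $y\le c$ is what brings the starting sublocale $\close_L(c)$ into the preimage $f_{-1}[\close_M(f(y))]$. Without either, the chain from complete separation in $L$ to complete separation in $M$ does not close up.
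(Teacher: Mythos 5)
Your proof is correct, and it is essentially the argument one expects here: the paper itself states Theorem~\ref{theorem.bmap} without proof (citing \cite[Theorem 5.3]{maps}), and your cycle reproduces exactly the template the paper uses for the analogous results (Theorems~\ref{ch.op} and \ref{lambda2}). In particular, your (iv)$\Rightarrow$(i) via the proper closed embedding $j\colon\close_M(a)\hookrightarrow M$ and Theorem~\ref{theorem.b}, and your (i)$\Rightarrow$(ii) via the cozero refinement of Remark~\ref{cs.normal} (producing $y\in\coz L$ with $y\le c$, so that $\z$-closedness yields the closed sublocale $\close_M(f(y))$ disjoint from $\sigma$, verifying condition (viii) of Remark~\ref{cs.beta}), are sound and use the complementedness hypothesis exactly where it is needed.
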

	
	One applies Theorem~\ref{theorem.bmap} to the selections $\mathfrak{S}_\close$, $\mathfrak{S}_\Sz$ and $\mathfrak{S}_{\rc}$ and obtains a characterization for normal, $\delta$-normally separated (see Corollaries 5.4 and 5.5 of \cite{maps} or \cite{Remote}) and almost normal frames.
	
	Recall from \cite{Remote} that a locale $L$ is \emph{weakly $\delta$-normally} separated if whenever $\close(a)\cap \close(b)=\OS$ with $a\in \coz~L$ and $b\in L^*$, then $\close(a)$ and $\close(b)$ are completely separated in $L$. 
	A locale $L$ is \emph{mildly normal} if for every regular elements $a$ and $b$ in $L$ such that $a\vee b = 1$, there exist $c,d\in L$ such that $c\wedge d = 0$ and $a\vee c = b\vee d = 1$ (\cite{Mthesis}); equivalently, if disjoint regular closed sublocales are completely separated. 
	\begin{proposition}
		Let $M$ be a locale. 
		\begin{enumerate}[label=\textup{(\arabic*)}]
			\item\label{wdn} $M$ is weakly $\delta$-normally separated if and only if every $\rc$-preserving map $f\colon L\to M$ is an $\mathfrak{S}_\Sz$-map.
			\item\label{an} $M$ is almost normal if and only if every $\rc$-preserving map $f\colon L\to M$ is an $\mathfrak{S}_\close\beta$-map.
			\item\label{mn} $M$ is mildly normal if and only if every $\rc$-preserving map $f\colon L\to M$ is an $\mathfrak{S}_{\mathsf{reg}\close}$-map.
		\end{enumerate}
	\end{proposition}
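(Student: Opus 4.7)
The plan is to prove each of (1), (2), (3) by establishing the two implications separately, following a common template that parallels Theorem~\ref{theorem.bmap}.

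\textbf{Forward direction.} Fix an $\rc$-preserving localic map $f\colon L\to M$; I verify the $\mathfrak{S}\beta$-map property through characterization (vii) of Remark~\ref{cs.beta}. Given a regular closed $\close_L(a^*)$ in $L$ that is completely separated from $f_{-1}[S]$ for some $S=\close_M(z)$ in the appropriate class, I set $\close_M(b^*):=f[\close_L(a^*)]$, which is regular closed in $M$ because $f$ is $\rc$-preserving. The adjunction $f[-]\dashv f_{-1}[-]$ immediately yields $\close_L(a^*)\subseteq f_{-1}[\close_M(b^*)]$. For complete separation of $\close_M(b^*)$ and $S$: complete separation in $L$ forces disjointness, i.e.\ $a^*\vee h(z)=1_L$; applying $f$ and using $\rc$-closedness (which is automatic from $\rc$-preserving) gives $b^*\vee z=f(a^*)\vee z=f(a^*\vee h(z))=1_M$, so $\close_M(b^*)\cap S=\OS$. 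At this point $b^*\in M^*$ and $z$ is respectively in $\coz M$ for (1), arbitrary in $M$ for (2), or in $M^*$ for (3), so the normality hypothesis on $M$—weakly $\delta$-normally separated, almost normal, or mildly normal respectively—promotes disjointness to complete separation, concluding the argument uniformly.

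\textbf{Backward direction.} I would mirror the strategy from the converse in Theorem~\ref{theorem.bmap}. To establish the relevant separation property of $M$, start with a disjoint pair $S_1, S_2$ of sublocales in $M$ witnessing the hypothesis (for (2), say, $\close_M(a)\cap\close_M(b^*)=\OS$ with $b^*\in M^*$). I then apply the $\mathfrak{S}\beta$-map assumption to a suitable $\rc$-preserving map—the natural candidate being the closed embedding $j\colon\close_M(b^*)\hookrightarrow M$—together with characterization (viii) of Remark~\ref{cs.beta}: since $j^{-1}[\close_M(a)]=\close_M(b^*)\cap\close_M(a)=\OS$ is trivially completely separated from the whole locale $\close_L(0_L)=\close_M(b^*)$, the $\mathfrak{S}\beta$-map property produces a closed $\close_M(d)$ of the appropriate kind with $\close_M(b^*)\subseteq\close_M(d)$ and $\close_M(d)$ completely separated from $\close_M(a)$. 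Remark~\ref{cs.closure}(1) then transfers complete separation back to $\close_M(b^*)$ and $\close_M(a)$, which is exactly what the normality property requires. The analogous argument handles (1) and (3) with appropriate choices of the sublocale playing the role of $\close_M(b^*)$.

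\textbf{Main obstacle.} The delicate point of the backward direction is checking that the closed embedding $j\colon\close_M(c)\hookrightarrow M$ selected above is genuinely $\rc$-preserving (rather than merely closed, which is all one needs for Theorem~\ref{theorem.bmap}). Being $\rc$-preserving requires that pseudocomplements computed inside $\close_M(c)$—elements of the form $y\to c$ for $y\in M$—be regular as elements of $M$, a condition that is not automatic from regularity of $c$ alone. I expect the real technical work to lie precisely in arranging the construction (by a careful choice of $c$, by using a related open sublocale, or by composing with a suitable coframe-theoretic quotient) so that the resulting embedding is both $\rc$-preserving and still witnesses the needed complete separation via the $\mathfrak{S}\beta$-map hypothesis.
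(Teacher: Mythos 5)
Your strategy is the same as the paper's. The forward direction matches exactly: for a regular closed $\close_L(a^*)$ completely separated from $f_{-1}[S]$, take its image under the $\rc$-preserving map (regular closed, disjoint from $S$ because $\rc$-preserving maps are $\rc$-closed and $f(1)=1$), let the hypothesis on $M$ upgrade disjointness to complete separation, and get the containment from the image/preimage adjunction; this is precisely how the paper verifies condition (vii) of Remark~\ref{cs.beta}. The backward direction also has the paper's structure: embed the regular closed sublocale, apply the $\mathfrak{S}\beta$-map hypothesis to the trivially completely separated pair consisting of the whole domain and the (void) preimage of the other closed sublocale, and transfer complete separation back via Remark~\ref{cs.closure}.

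The genuine gap is the step you explicitly defer: that the embedding $j\colon\close_M(b^*)\hookrightarrow M$ is $\rc$-preserving. As written, your converse is incomplete there, and this is in fact the only technical content of the paper's converse; moreover your diagnosis that regularity of the element is "not automatic"ally enough is mistaken, and no careful re-choice of $c$, passage to an open sublocale, or quotient construction is needed. In the closed sublocale $S=\close_M(b^*)=\ur b^*$ the Heyting operation is the restriction of that of $M$ and the bottom element is $b^*$, so the pseudocomplement in $S$ of any $x\in S$ is
\[
x^{*_S}=x\to b^*=x\to(b\to 0)=(x\wedge b)\to 0=(x\wedge b)^*,
\]
a regular element of $M$; since closed sublocales of closed sublocales are closed, $j[\close_S(x^{*_S})]=\close_S(x^{*_S})=\close_M((x\wedge b)^*)$ is regular closed in $M$, so $j$ is $\rc$-preserving (the same computation handles case (1), where the sublocale embedded is $\close_M(b)$ with $b\in M^*$). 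Once this two-line Heyting computation is inserted, your argument coincides with the paper's proof for all three items.
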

	\begin{proof}
		\ref{wdn}: Let $M$ be a weakly $\delta$-normally separated frame and let $f\colon L\to M$ be a $\rc$-preserving map. Let $a\in L$ and $b\in \coz M$ such that $\overline{\open_L(a)}$ is completely separated from $f_{-1}[\close_M(b)]$. In particular we have \[\overline{\open_L(a)}\subseteq f_{-1}[\close_M(b)]^\#=f_{-1}[\open_M(b)].\]
		Thus, by the image/preimage adjunction $f[\overline{\open_L(a)}]\subseteq \open_M(b)$. That is, $f[\overline{\open_L(a)}]\cap \close_M(b)=\OS$. Since $f$ is $\rc$-preserving, $f[\overline{\open_L(a)}]$ is regular closed, and $M$ is weakly $\delta$-normally separated so $f[\overline{\open_L(a)}]$ and $\close_M(b)$ are completely separated in $M$, which proves that $f$ is a $\mathfrak{S}_\Sz$-map.\\
		Conversely, assume that every $\rc$-preserving map $f\colon L\to M$ is an $\mathfrak{S}_{\Sz}$-map. Let $a\in M$ and $b\in \coz~M$ such that $\overline{\open_M(a)}\cap \close_M(b)=\OS$. Let us right $S:=\overline{\open_M(a)}=\close_M(a^*)$.
		Consider the embedding $j\colon \overline{\open_M(a)}\hookrightarrow M$; it is $\rc$-closed. Indeed take a regular element $x^{*_S}$ of $S$. Using properties of the Heyting operator (\cite[Proposition 3.1.1]{PP12}), we know 
		\[x^{*_S}=x\rightarrow a^*=x\rightarrow (a\rightarrow 0)=(x\wedge a)\rightarrow 0=(x\wedge a)^*.\]
		Thus $x^{*_S}$ is a regular element of $M$. Then, since closed sublocales of closed sublocales are closed,
		$j[\close_S(x^{*_S})]=\close_S(x^{*_S})=\close_M(x^{*_S})$
		is a regular closed sublocale of $M$. Thus, $j$ is $\rc$-preserving and hence a $\mathfrak{S}_\Sz$-map.
		Note that $\overline{\open_M(a)}$ and $j{-1}[\close_M(b)]=\OS$ are trivially completely separated in $S$. Then there exists a zero sublocale $Z\in \Sz(M)$ such that $\overline{\open_M(a)}\subseteq Z$ and $Z$ is completely separated from $\close_M(b)$. In particular, $\overline{\open_M(a)}$ and $\close_M(b)$ are completely separated in $M$. \\
		For \ref{an} and \ref{mn} one mimics the proof of \ref{wdn} considering $b\in M$ or $b^*\in M$, and substituting $\mathfrak{S}_\Sz$ for $\mathfrak{S}_\close$ and $\mathfrak{S}_{\mathsf{reg}\close}$, respectively. 
	\end{proof}
	
	
	The following result is an improvement of Proposition 6.1 in \cite{Remote} where the authors impose two extra conditions, namely assertiveness and $coz$-closedness\footnote{A word of caution to the reader: $coz$-closed maps of \cite{Remote} are not the same as $\z$-closed maps, as one would expect. More will be commented in Remark~\ref{coz.closed}.}.
	As you can see this result is very similar to the classical one in \cite[Lemma 1.4]{Woods}:
	
	\begin{proposition}
		Let $f\colon L\to M$ be an onto $\mathfrak{S}_\Sz$-localic map.
		\begin{enumerate}[label=\textup{(\arabic*)}]
			\item\label{dw1} If $L$ is $\delta$-normally separated then so is $M$.
			\item\label{dw2} If $L$ is weakly $\delta$-normally separated then so is $M$.
		\end{enumerate}
	\end{proposition}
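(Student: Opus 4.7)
The plan for both parts is the same three-step recipe: pull back the given sublocales of $M$ via $f_{-1}$, invoke the separation hypothesis on the resulting preimages in $L$, and then push complete separation back up to $M$ using the $\mathfrak{S}_\Sz\beta$-map characterization together with the surjectivity of $f$. Recall that $f$ onto amounts to $h$ being injective, equivalently $f \circ h = \mathrm{id}_M$; this identity is the lever that turns inclusions of preimages in $L$ into inclusions of the original sublocales in $M$.

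For \ref{dw1}, write the given zero sublocale as $Z = \close_M(a)$ with $a \in \coz M$ and the given closed sublocale as $F = \close_M(b)$, so that $Z \cap F = \OS$ amounts to $a \vee b = 1$. The preimage $f_{-1}[Z] = \close_L(h(a))$ lies in $\Sz(L)$ since $h$ preserves cozero elements, while $f_{-1}[F] = \close_L(h(b))$ is closed, and the two are disjoint because $h(a)\vee h(b) = 1$. By $\delta$-normal separation of $L$ they are completely separated in $L$. I would then apply variant (viii) of Remark~\ref{cs.beta} with $S = Z$ and $\close_L(h(b)) = f_{-1}[F]$ to obtain a closed sublocale $\close_M(x)$ of $M$ with $f_{-1}[F] \subseteq f_{-1}[\close_M(x)]$ and $\close_M(x)$ completely separated from $Z$ in $M$. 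The inclusion forces $h(x) \leq h(b)$, and applying $f$ together with $f \circ h = \mathrm{id}_M$ yields $x \leq b$, hence $F \subseteq \close_M(x)$; by Remark~\ref{cs.closure}(1) the complete separation of $\close_M(x)$ from $Z$ descends to that of $F$ and $Z$ in $M$.

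Part \ref{dw2} follows the same outline but with a subtlety which is the main obstacle. Starting from $a \in \coz M$, $c^* \in M^*$ and $a \vee c^* = 1$, the preimage $f_{-1}[\close_M(c^*)] = \close_L(h(c^*))$ need not be generated by a regular element of $L$, because frame homomorphisms need not preserve pseudocomplements, so weak $\delta$-normal separation of $L$ does not apply directly to $h(c^*)$. The workaround is to replace $h(c^*)$ by the automatically regular element $h(c)^* \in L^*$: since $h(c^*) \leq h(c)^*$ we still have $h(a)\vee h(c)^* = 1$, hence $\close_L(h(a)) \cap \close_L(h(c)^*) = \OS$. With $h(a) \in \coz L$ and $h(c)^* \in L^*$, weak $\delta$-normal separation of $L$ gives that $\close_L(h(a))$ and $\close_L(h(c)^*) = \overline{\open_L(h(c))}$ are completely separated, which via Proposition~\ref{cs.cb} and Remark~\ref{cs.closure}(2) is exactly $h(c) \cb h(a)$. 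Applying condition \ref{3} of Theorem~\ref{theorem.b} with $S = \close_M(a)$ (so $\tbigwedge f_{-1}[S] = h(a)$) yields some $x \in M$ with $h(c) \leq h(x)$ and $x \cb a$; surjectivity then gives $c = f(h(c)) \leq f(h(x)) = x$, so $c \cb a$. A final appeal to Proposition~\ref{cs.cb} and Remark~\ref{cs.closure}(2) translates this back to the complete separation of $\close_M(c^*) = \overline{\open_M(c)}$ and $\close_M(a)$ in $M$, as required.
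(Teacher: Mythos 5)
Your proof is correct and follows essentially the same route as the paper's: pull the data back along $f_{-1}$, apply the ($\delta$- or weak $\delta$-)normal separation of $L$ to the preimages --- including, in part (2), the same key substitution of the regular element $h(c)^*$ for the possibly non-regular $h(c^*)$ --- and then use the $\mathfrak{S}_\Sz\beta$-map property together with surjectivity to descend to $M$. The only difference is bookkeeping: you invoke variant (viii) of Remark~\ref{cs.beta} and condition (iii) of Theorem~\ref{theorem.b} and exploit ontoness as $f\circ h=\mathrm{id}_M$ at the level of elements, whereas the paper argues with zero sublocales, images and the closure formula $\overline{f[\close_L(y)]}=\close_M(f(y))$.
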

	\begin{proof}
		\ref{dw1}: Take $Z\in \Sz(M)$ and $a\in M$ such that $Z\cap \close_L(a)=\OS$. Then $f_{-1}[Z]\cap f_{-1}[\close_M(a)]=\OS$. Since $L$ is $\delta$-normally separated, there is $D\in \Sz(L)$ such that $f_{-1}[Z]\cap D =\OS$ and $f_{-1}[\close_L(a)]\subseteq D$. Since $f$ is a $\mathfrak{S}_\Sz$-map there is $Z'\in \Sz(M)$ such that $Z$ and $Z'$ are completely separated in $M$ and $D \subseteq f_{-1}[Z']$. we have 
		\[f_{-1}[\close_M(a)]\subseteq D\subseteq f_{-1}[Z']\]
		then $f[f_{-1}[\close_M(a)]]\subseteq Z'$. Thus, $Z$ and $f[f_{-1}[\close_M(a)]]$ are completely separated in $M$. In fact, by Remark~\ref{cs.closure}, $Z$ and $\overline{f[f_{-1}[\close_M(a)]]}$ are completely separated in $M$. Notice that 
		\[\overline{f[f_{-1}[\close_M(a)]]}= \overline{f[\close_L(h(a))]]}=\close_M(f(h(a)))=\close_M(a)\]
		where the last equality holds because $f$ is onto. Hence, $Z$ and $\close_M(a)$ are completely separated in $M$, as required. \\
		The proof of \ref{dw2} follows similarly as in \ref{dw1}, but one has to be careful with regular elements.
		Let $Z\in \Sz(M)$ and $a\in M$ such that $Z\cap \close_L(a^*)=\OS$, then $f_{-1}[Z]\cap f_{-1}[\close_M(a^*)]=\OS$. Note that 
		\[\close_L(h(a)^*)=\overline{f_{-1}[\open_M(a)]}\subseteq f_{-1}[\overline{\open_M(a)}]=f_{-1}[\close_M(a^*)].\]
		Hence, $f_{-1}[Z]\cap \close_L(h(a)^*)=\OS$.
		Since $L$ is  weakly $\delta$-normally separated, there is $D\in \Sz(L)$ such that $f_{-1}[Z]\cap D =\OS$ and $\close_L(h(a)^*)\subseteq D$. Since $f$ is a $\mathfrak{S}_\Sz\beta$-map there is $Z'\in \Sz(M)$ such that $Z$ and $Z'$ are completely separated in $M$ and $D \subseteq f_{-1}[Z']$. We have that
		\[\close_L(h(a)^*)\subseteq D\subseteq f_{-1}[Z'],\]
		so using the image/preimage adjunction and the fact that $Z'$ is closed, we get $\overline{f[\close_L(h(a)^*)]}\subseteq Z'$. Consequently, $Z$ and $\overline{f[\close_L(h(a)^*)]}$ are completely separated in $M$. Notice that $\overline{f[\close_L(h(a)^*)]}=\close_M(f(h(a)^*))$ and \[f(h(a)^*)=f(h(a)\to 0)=a\to f(0)=a\to 0=a^*\]
		where the second and third equality hold because $f$ is an onto localic map. Thus, $Z$ and $\close_M(a^*)$ are completely separated in $M$, as required.
	\end{proof}
	
	It is well-known that the image of a normal locale under a closed map is normal (\cite[VIII.5.2.2]{Separation}). In fact, mimicking the proof of the previous proposition, we also obtain the following result:
	
	\begin{proposition}
		Let $f\colon L\to M$ be an onto $\mathfrak{S}_\close\beta$-localic map.
		\begin{enumerate}
			\item If $L$ is normal then so is $M$.
			\item If $L$ is almost normal then so is $M$.
		\end{enumerate}
	\end{proposition}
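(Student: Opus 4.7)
My plan is to mimic the preceding proposition about $\mathfrak{S}_\Sz\beta$-maps: pull the relevant (regular) closed sublocales of $M$ back to $L$, invoke the appropriate separation property of $L$, transfer the resulting complete separation back to $M$ via one of the equivalent characterizations of $\mathfrak{S}_\close\beta$-maps in Remark~\ref{cs.beta}, and conclude by pushing through $f$ using surjectivity (which gives $f\circ h=\mathrm{id}_M$ and $f(0_L)=0_M$, and hence $\overline{f[f_{-1}[T]]}=T$ for $T$ closed in $M$).

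For (1), I would take closed sublocales $\close_M(a)$ and $\close_M(b)$ of $M$ with empty intersection. Their preimages $\close_L(h(a))$ and $\close_L(h(b))$ are disjoint closed sublocales of $L$, hence completely separated since $L$ is normal, equivalently $\mathfrak{S}_\close$-normally separated. Applying condition~(viii) of Remark~\ref{cs.beta} with $S=\close_M(b)$ then produces a closed $\close_M(y)$ containing $f[f_{-1}[\close_M(a)]]=\close_M(a)$ (by surjectivity) and completely separated from $\close_M(b)$, so that $\close_M(a)$ and $\close_M(b)$ are completely separated in $M$ and $M$ is normal.

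For (2), the subtlety is that the preimage of a regular closed sublocale of $M$ is generally only closed in $L$, so almost normality of $L$ cannot be applied directly to the two naive preimages. Given disjoint regular closed $\close_M(a^*)$ and $\close_M(b^*)$, I would therefore pair the closed $\close_L(h(a^*))=f_{-1}[\close_M(a^*)]$ with the \emph{regular} closed $\close_L(h(b)^*)$. These two are disjoint because $h(a^*)\vee h(b)^*\geq h(a^*)\vee h(b^*)=h(a^*\vee b^*)=1$, using the general inequality $h(b^*)\leq h(b)^*$. Almost normality of $L$, in the form~\ref{an3}, then guarantees they are completely separated in $L$. Applying condition~(vii) of Remark~\ref{cs.beta} with $\close_L(x^*)=\close_L(h(b)^*)$ and $S=\close_M(a^*)$ yields a regular closed $\close_M(z^*)$ of $M$, completely separated from $\close_M(a^*)$, with $\close_L(h(b)^*)\subseteq f_{-1}[\close_M(z^*)]$. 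Taking images and closures gives $\overline{f[\close_L(h(b)^*)]}=\close_M(f(h(b)^*))\subseteq\close_M(z^*)$, and since $f$ is onto (hence dense) one has $f(h(b)^*)=f(h(b)\to 0_L)=b\to 0_M=b^*$. Thus $\close_M(b^*)\subseteq\close_M(z^*)$ and $\close_M(b^*)$ is completely separated from $\close_M(a^*)$, showing $M$ is almost normal.

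The main obstacle is exactly this loss of regularity under $f_{-1}[-]$ in part~(2): the remedy is to regularize on the $L$-side by replacing $f_{-1}[\close_M(b^*)]=\close_L(h(b^*))$ with the smaller regular closed sublocale $\close_L(h(b)^*)$, and to notice that the bound $h(b^*)\leq h(b)^*$ is already enough to preserve disjointness with $\close_L(h(a^*))$ so that almost normality of $L$ still fires. Apart from this, the argument is a direct transcription of the $\delta$-normally separated case just treated.
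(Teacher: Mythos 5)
Your part (1) is correct and is exactly the paper's intended argument (the paper itself only says to mimic the preceding proposition): pull back the two disjoint closed sublocales, use normality of $L$, apply the closed-sublocale form of the $\mathfrak{S}_\close\beta$-property, and push forward using $f(h(a))=a$; the only slip is writing $f[f_{-1}[\close_M(a)]]=\close_M(a)$ without a closure, which is harmless since the witness $\close_M(y)$ is closed and $\overline{f[\close_L(h(a))]}=\close_M(f(h(a)))=\close_M(a)$.

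In part (2), however, there is a genuine gap: you start from \emph{two regular closed} sublocales $\close_M(a^*)$ and $\close_M(b^*)$ and conclude that disjoint regular closed sublocales of $M$ are completely separated. That is mild normality of $M$, not almost normality. Almost normality (condition \ref{an3} of the proposition in the paper) requires that a disjoint pair consisting of an \emph{arbitrary} closed sublocale $\close_M(a)$ and a regular closed sublocale $\close_M(b^*)$ be completely separated, and your argument as written never treats arbitrary $a$. The good news is that your key idea --- regularize only on the $L$-side, replacing $f_{-1}[\close_M(b^*)]=\close_L(h(b^*))$ by the regular closed $\close_L(h(b)^*)$, using $h(b^*)\le h(b)^*$ to keep disjointness --- is precisely what is needed, and nothing in your chain uses regularity of the other sublocale: with $a\in M$ arbitrary and $a\vee b^*=1$ one still has $h(a)\vee h(b)^*\ge h(a)\vee h(b^*)=1$, so $\close_L(h(a))$ (closed) and $\close_L(h(b)^*)$ (regular closed) are disjoint and almost normality of $L$ makes them completely separated; condition (vii)/(viii) of Remark~\ref{cs.beta} applies with $S=\close_M(a)$ (which only needs to be closed, since $\mathfrak{S}=\mathfrak{S}_\close$), and pushing forward with $f(h(b)^*)=b^*$ (valid since $f$ onto implies $f(0)=0$) gives $\close_M(b^*)\subseteq\close_M(z)$ completely separated from $\close_M(a)$. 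So the proof is repaired simply by not regularizing the $M$-side sublocale; as written, it proves a strictly weaker property.
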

	
	\subsection{Characterizing variants of disconnectedness}
	
	
	We have generalized the notion of $\beta$-maps using selection of sublocales; it is only natural to dualize the notions and results of the previous subsection to selections of open sublocales.\\
	
	We say a frame $L$ is \emph{$\mathfrak{S}$-disconnected} if $S\in\mathfrak{S}L$ is completely separated from every open sublocale disjoint from it. 
	
	\begin{remark}\label{disc}
		A locale $L$ is $\mathfrak{S}$-disconnected if and only if $\overline{S}$ is clopen for every $S\in \mathfrak{S}L$. Indeed, in general we have $S\cap \overline{S}^\#=\OS$ for any $S\in \mathfrak{S}L$; by $\mathfrak{S}$-disconnectedness, $S$ and $\overline{S}^\#$ are completely separated. In particular, there exist $Z\in \Sz(L)$ and $C\in \Sc(L)$ such that $S\subseteq Z\subseteq C\subseteq \overline{S}$. Hence,
		\[\overline{S}^\circ\subseteq\overline{S}\subseteq Z\subseteq C\subseteq \overline{S}^\circ.\]
		Conversely, suppose $S\cap \open(a)=\OS$ for some $S\in\mathfrak{S}L$ and $a\in L$, then $\overline{S}\cap \open(a)=\OS$. Clearly $S\subseteq \overline{S}$ and $\open(a)\subseteq \overline{S}^\#$. By assumption, $\overline{S}$ and $\overline{S}^\#$ are disjoint clopens, and hence disjoint zero sublocales; thus, they completely separate $S$ and $\open(a)$.	
	\end{remark}
	We obtain the dual result of Theorem~\ref{theorem.bmap}:
	\begin{theorem}\label{ch.op}
		Let $\mathfrak{S}$ be a selection on sublocales, and let $M$ be a locale such that every $T\in\mathfrak{S}M$ is complemented. The following are equivalent:
		\begin{enumerate}[label=\textup{(\roman*)}]
			\item\label{e.1} $M$ is $\mathfrak{S}$-disconnected. 
			\item\label{e.2} Every $\co$-open localic map $f\colon L\to M$ is an $\mathfrak{S}\beta$-map. 
			\item\label{e.3} Every open localic map $f\colon L\to M$ is an $\mathfrak{S}\beta$-map. 
		\end{enumerate}
	\end{theorem}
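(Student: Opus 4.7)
The three implications split into one trivial and two with content. Since every cozero sublocale is open, every open localic map is $\co$-open, so the implication \ref{e.2}$\Rightarrow$\ref{e.3} is automatic.

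For \ref{e.3}$\Rightarrow$\ref{e.1} I would test the hypothesis against an open embedding. Fix $S\in\mathfrak{S}M$ and any open sublocale $\open_M(a)$ disjoint from $S$, and consider $j\colon \open_M(a)\hookrightarrow M$. Open sublocale embeddings are open localic maps, hence $\mathfrak{S}\beta$-maps by \ref{e.3}. In $\open_M(a)$ one has $j_{-1}[S]=\open_M(a)\cap S=\OS$, from which any sublocale is trivially completely separated. Applying Theorem~\ref{theorem.b}\ref{2} to $j$ with the open sublocale taken to be the whole locale $\open_M(a)$ yields $b\in M$ such that $\open_M(a)\subseteq j_{-1}[\open_M(b)]=\open_M(a)\cap\open_M(b)$ and $\open_M(b)$ is completely separated from $S$ in $M$. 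The first condition gives $\open_M(a)\subseteq\open_M(b)$, and monotonicity of complete separation (Remark~\ref{cs.closure}(1)) then yields $\open_M(a)$ completely separated from $S$ in $M$, witnessing $\mathfrak{S}$-disconnectedness.

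For \ref{e.1}$\Rightarrow$\ref{e.2} I would use Remark~\ref{disc} to write $\overline{S}=\close_M(c)=\open_M(c^*)$ clopen with $c=\bigwedge S$; then $c$ is complemented, $c,c^*\in\coz M$, and $c\cb c$. I verify the element-wise characterisation of Theorem~\ref{theorem.b}\ref{3} for the $\co$-open map $f$: given $y\cb \bigwedge f_{-1}[S]$, produce $x\in M$ with $x\cb c$ and $y\leq h(x)$. Proposition~\ref{cs.cb} rewrites the hypothesis as $\open_L(y)$ completely separated from $\overline{f_{-1}[S]}$, and hence from $f_{-1}[S]$ itself (Remark~\ref{cs.closure}). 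Remark~\ref{cs.normal} then delivers $u\in\coz L$ with $\open_L(y)\subseteq\open_L(u)$ and $\open_L(u)\cap f_{-1}[S]=\OS$. Since $u\in\coz L$ and $f$ is $\co$-open, the image $f[\open_L(u)]$ is an open sublocale of $M$, say $f[\open_L(u)]=\open_M(b)$.

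The concluding step hinges on the standing hypothesis that $S$ is complemented. Because $f_{-1}$ preserves complements, $f_{-1}[S]$ is complemented; combining \eqref{fsuppl} with the image/preimage adjunction transfers the disjointness $\open_L(u)\cap f_{-1}[S]=\OS$ into $\open_M(b)\cap S=\OS$. Now $\mathfrak{S}$-disconnectedness (Remark~\ref{disc}) upgrades disjointness to complete separation of $\open_M(b)$ from $S$, which Proposition~\ref{cs.cb} reads as $b\cb c$. Finally $\open_L(u)\subseteq f_{-1}[f[\open_L(u)]]=f_{-1}[\open_M(b)]=\open_L(h(b))$ yields $y\leq u\leq h(b)$, so $x:=b$ witnesses the criterion. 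The main obstacle is precisely this transport of disjointness across $f$ in both directions, and it is exactly the complementedness assumption on the members of $\mathfrak{S}M$ that makes the conversion work.
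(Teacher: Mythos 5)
Your proposal is correct and follows essentially the same route as the paper: the implication \ref{e.3}$\Rightarrow$\ref{e.1} via the open embedding $j\colon\open_M(a)\hookrightarrow M$ is identical, and your \ref{e.1}$\Rightarrow$\ref{e.2} uses the same key transfers (Remark~\ref{cs.normal} to pass to a cozero sublocale, $\co$-openness to get an open image, complementedness of $S$ plus the image/preimage adjunction to move disjointness across $f$, and $\mathfrak{S}$-disconnectedness to upgrade it to complete separation). The only cosmetic difference is that you verify the element-wise criterion of Theorem~\ref{theorem.b}\ref{3}, whereas the paper checks the cozero-sublocale variant of Remark~\ref{cs.beta}.
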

	\begin{proof}
		\ref{e.1}$\implies$\ref{e.2}: Assume $M$ is $\mathfrak{S}$-disconnected. Let $f\colon L\to M$ be a $\co$-open localic map. Let $C\in \Sc(L)$ and $S\in\mathfrak{S}M$ such that $C$ and $f_{-1}[S]$ are completely separated in $L$. In particular, $C\subseteq f_{-1}[S]^\#=f_{-1}[S^\#]$. By the image/ preimage adjunction, we have that $f[C]\subseteq S^\#$. Since every sublocale in $\mathfrak{S}M$ is complemented, we get $f[C]\cap S=\OS$. By the $\co$-openness of $f$ and the $\mathfrak{S}$-disconnectedness of $M$, we know $f[C]$ and $S$ are completely separated in $M$. In particular, by Remark~\ref{cs.normal},  there is a cozero sublocale $D\in\mathsf{CozS}(M)$ such that $D$ is completely separated from $S$ and $f[C]\subseteq D$ (equivalently, $C\subseteq f_{-1}[D]$).\\	
		Clearly \ref{e.2}$\implies$ \ref{e.3}, since very open localic map is $\co$-open.\\
		\ref{e.3}$\implies$\ref{e.1}: Let $S\in \mathfrak{S}M$ and $a\in M$ such that $S\cap \open_M(a)=\OS$. Take the open localic embedding $j\colon\open_M(a)\hookrightarrow L$; by assumption, it is an $\mathfrak{S}\beta$-map. Trivially, $\open_M(a)$  and $j_{-1}[S]$ are completely separated in $\open_M(a)$ (since $j_{-1}[S]=S\cap \open_M(a)=\OS$). Hence, there is $b\in M$ such that $\open_M(b)$ and $S$ are completely separated in $M$, and 
		$\open_M(a)\subseteq j_{-1}[\open_M(b)]=\open_M(a)\cap \open_M(b)$. Thus, $\open_M(a)\subseteq \open_M(b)$, meaning $\open_M(a)$ and $S$ are also completely separated, showing that $M$ is $\mathfrak{S}$-disconnected. 
	\end{proof}
	
	Recall that a frame $L$ is \emph{extremally disconnected} (resp. basically disconnected) if $a^*\vee a^{**}=1$ for every $a\in L$ (resp. $a\in\coz~L$). 
	Using the characterizations in \cite{parallel} and \cite[Proposition 8.4.3]{BW} one can easily see that $\mathfrak{S}_\open$-disconnectedness is just extremal disconnectedness, and a locale is $\mathfrak{S}_\Sc$-disconnected precisely if it is basically disconnected. Thus, applying Theorem~\ref{ch.op} to these selections we obtain the following two corollaries: 
	
	\begin{corollary}
		The following are equivalent for a locale $M$:
		\begin{enumerate}[label=\textup{(\roman*)}]
			\item $M$ is extremally disconnected.
			\item Every $\co$-open localic map $f\colon L\to M$ is an $\mathfrak{S}_\open\beta$-map.
			\item Every open localic map $f\colon L\to M$ is an $\mathfrak{S}_\open\beta$-map.
		\end{enumerate}
	\end{corollary}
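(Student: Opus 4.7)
The plan is to derive this as a direct instance of Theorem~\ref{ch.op} specialized to the selection $\mathfrak{S}_\open$. Two things must be verified to make the theorem applicable: first, that every $T\in\mathfrak{S}_\open M$ is complemented in $\Ss(M)$, and second, that $\mathfrak{S}_\open$-disconnectedness of $M$ is literally the same condition as extremal disconnectedness.

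The first point is immediate: an open sublocale $\open_M(a)$ is complemented by the closed sublocale $\close_M(a)$, as recorded in the preliminaries. So the hypothesis of Theorem~\ref{ch.op} is satisfied for this selection on any locale.

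The second point is essentially stated in the paragraph preceding the corollary, but I would spell it out using Remark~\ref{disc}. By that remark, $M$ is $\mathfrak{S}_\open$-disconnected iff $\overline{\open_M(a)}$ is clopen for every $a\in M$. Since $\overline{\open_M(a)}=\close_M(a^*)$, being clopen is equivalent to the element $a^*$ being complemented in $M$; as its complement must coincide with its pseudocomplement $a^{**}$, this is exactly $a^*\vee a^{**}=1$ for every $a\in M$, i.e., extremal disconnectedness.

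With those two observations in hand, the biconditionals \textup{(i)}$\iff$\textup{(ii)}$\iff$\textup{(iii)} are precisely the content of Theorem~\ref{ch.op} applied to $\mathfrak{S}=\mathfrak{S}_\open$. There is no real obstacle here; the entire point of having developed Theorem~\ref{ch.op} at the level of general selections is that corollaries of this kind reduce to recognizing the disconnectedness condition for the chosen class of sublocales. I would therefore keep the proof to a single short paragraph that (a) notes openness-implies-complementedness, (b) identifies $\mathfrak{S}_\open$-disconnectedness with extremal disconnectedness via Remark~\ref{disc}, and (c) invokes Theorem~\ref{ch.op}.
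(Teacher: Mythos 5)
Your proof is correct and follows essentially the same route as the paper: the corollary is obtained by specializing Theorem~\ref{ch.op} to the selection $\mathfrak{S}_\open$, after checking that open sublocales are complemented and that $\mathfrak{S}_\open$-disconnectedness coincides with extremal disconnectedness. The only difference is cosmetic — the paper cites \cite{parallel} and \cite[Proposition 8.4.3]{BW} for that identification, while you derive it directly from Remark~\ref{disc} via $\overline{\open_M(a)}=\close_M(a^*)$ and the fact that a complement of $a^*$ must be $a^{**}$, which is a perfectly sound self-contained alternative.
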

	\begin{corollary}
		The following are equivalent for a locale $M$:
		\begin{enumerate}[label=\textup{(\roman*)}]
			\item $M$ is basically disconnected.
			\item Every $\co$-open localic map $f\colon L\to M$ is an $\mathfrak{S}_\Sc\beta$-map.
			\item Every open localic map $f\colon L\to M$ is an $\mathfrak{S}_\Sc\beta$-map.
		\end{enumerate}
	\end{corollary}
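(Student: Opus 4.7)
The plan is to apply Theorem~\ref{ch.op} with the selection $\mathfrak{S}=\mathfrak{S}_\Sc$. Two ingredients must be verified in order for the theorem to deliver the desired equivalence.

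First, the hypothesis of Theorem~\ref{ch.op} requires every $T\in\mathfrak{S}_\Sc M$ to be complemented. This is immediate: each cozero sublocale $\open(a)$ (with $a\in\coz~M$) is complemented in $\Ss(M)$ by the corresponding zero sublocale $\close(a)$, via the standard open/closed complementation recalled in Section~1.

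Second, $\mathfrak{S}_\Sc$-disconnectedness of $M$ must be identified with basic disconnectedness. By Remark~\ref{disc}, $M$ is $\mathfrak{S}_\Sc$-disconnected iff $\overline{S}$ is clopen for each $S\in\mathfrak{S}_\Sc M$. Writing $S=\open(a)$ with $a\in\coz~M$, we have $\overline{\open(a)}=\close(a^*)$, and a closed sublocale $\close(x)$ is clopen exactly when $x$ is complemented in $M$, i.e.\ $x\vee x^*=1$. Taking $x=a^*$ yields $a^*\vee a^{**}=1$ for every $a\in\coz~M$, which is precisely the definition of basic disconnectedness. (The identification is in fact already noted in the paragraph immediately preceding the corollary, so this step is essentially a bookkeeping reminder.)

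With both ingredients in place, Theorem~\ref{ch.op} specialised to $\mathfrak{S}_\Sc$ delivers the equivalence of the three listed conditions at once, and the corollary follows. No genuine obstacle arises: the argument is a direct specialisation. The only conceptually nontrivial point—hence the candidate for the main (mild) obstacle—is the translation of $\mathfrak{S}_\Sc$-disconnectedness into the elemental condition $a^*\vee a^{**}=1$, which, as indicated, is a one-line calculation via $\overline{\open(a)}=\close(a^*)$ and Remark~\ref{disc}.
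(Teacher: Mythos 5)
Your proposal is correct and follows the paper's own route: the corollary is obtained by specialising Theorem~\ref{ch.op} to the selection $\mathfrak{S}_\Sc$, after noting that cozero sublocales are complemented and that $\mathfrak{S}_\Sc$-disconnectedness is exactly basic disconnectedness. The only (harmless) difference is that you justify this last identification internally via Remark~\ref{disc} and $\overline{\open(a)}=\close(a^*)$, whereas the paper refers to characterizations in the cited literature.
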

	
	A locale $L$ is said to be an \emph{$F$-frame} if every pair of disjoint cozero sublocales are completely separated in $L$ (\cite[Proposition 8.4.10]{BW}).
	\begin{proposition}
		Let $M$ be a locale.
		\begin{enumerate}[label=\textup{(\arabic*)}]
			\item\label{f} $M$ is an $F$-frame if and only if every $\co$-preserving map is an $\mathfrak{S}_\Sc\beta$-map. 
			\item\label{bd} $M$ is basically disconnected if and only if every $\co$-preserving map is an $\mathfrak{S}_\open\beta$-map. 
		\end{enumerate}
	\end{proposition}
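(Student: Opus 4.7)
The plan is to run both parts in parallel, following the dual pattern of Theorem~\ref{ch.op} and leveraging the reformulation of $\mathfrak{S}\beta$-maps given in Remark~\ref{cs.beta}(v): $f\colon L\to M$ is an $\mathfrak{S}\beta$-map if and only if for every $S\in \mathfrak{S}M$ and every $C\in \Sc(L)$ completely separated from $f_{-1}[S]$ there exists $C'\in \Sc(M)$ with $C\subseteq f_{-1}[C']$ and $C'$ completely separated from $S$. Both forward directions will exploit the $\co$-preserving condition on $f$ together with the frame-theoretic assumption on $M$, while both converses will probe $M$ with the embedding $j\colon \open_M(a)\hookrightarrow M$ for $a\in \coz M$.

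For the forward implications, let $f\colon L\to M$ be $\co$-preserving and take $C\in \Sc(L)$ completely separated from $f_{-1}[S]$. Complete separation forces $C\cap f_{-1}[S]=\OS$, and since $S$ is complemented the image/preimage adjunction yields $f[C]\cap S=\OS$; the $\co$-preserving hypothesis places $f[C]$ in $\Sc(M)$. In~(1), $M$ being an $F$-frame makes the disjoint cozero sublocales $f[C]$ and $S$ completely separated, and Remark~\ref{cs.normal} produces $C'\in \Sc(M)$ containing $f[C]$ and completely separated from $S$. In~(2), write $f[C]=\open_M(c)$ with $c\in \coz M$ and $S=\open_M(b)$; then $b\leq c^*$, and basic disconnectedness $c^*\vee c^{**}=1$ makes $\overline{\open_M(c)}=\close_M(c^*)$ clopen, hence both zero and cozero. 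Taking $C':=\close_M(c^*)$ gives $f[C]\subseteq C'$, while $\open_M(b)\subseteq \open_M(c^*)=(C')^\#$; since $C'$ and $(C')^\#$ are complementary clopens (and thus disjoint zero sublocales), $C'$ and $\open_M(b)$ are completely separated.

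For the converse implications, fix $a\in \coz M$ and consider the embedding $j\colon \open_M(a)\hookrightarrow M$. The crucial preliminary step is that $j$ is $\co$-preserving: every cozero sublocale of $\open_M(a)$ is of the form $\open_M(a)\cap \open_M(c)=\open_M(a\wedge c)$ for some $c\in \coz M$, and $a\wedge c\in \coz M$ because $\coz M$ is closed under finite meets, so $j$ sends cozero sublocales of $\open_M(a)$ to cozero sublocales of $M$. The hypothesis then makes $j$ an $\mathfrak{S}_\Sc\beta$-map in~(1) (resp.\ an $\mathfrak{S}_\open\beta$-map in~(2)). For~(1), given disjoint cozero sublocales $\open_M(a)$ and $\open_M(b)$, apply the characterization with $S:=\open_M(b)$ and $C:=\open_M(a)$, the whole domain viewed as its own top cozero sublocale. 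Disjointness gives $j_{-1}[S]=\open_M(a)\cap \open_M(b)=\OS$, so $C$ is trivially completely separated from $j_{-1}[S]$, and we extract $C'\in \Sc(M)$ with $\open_M(a)\subseteq C'$ and $C'$ completely separated from $\open_M(b)$; Remark~\ref{cs.closure}(1) then yields complete separation of $\open_M(a)$ from $\open_M(b)$, showing $M$ to be an $F$-frame. Part~(2) runs verbatim with $S=\open_M(b)$ any open sublocale disjoint from $\open_M(a)$: the same extraction produces a cozero sublocale of $M$ completely separating $\open_M(a)$ from $\open_M(b)$, and by Remark~\ref{disc} this shows $M$ is $\mathfrak{S}_\Sc$-disconnected, i.e., basically disconnected.

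The main technical point I expect to require care is the auxiliary claim that $j\colon \open_M(a)\hookrightarrow M$ is $\co$-preserving when $a\in \coz M$; this is the pointfree avatar of the classical fact that cozero subspaces are $z$-embedded. A direct verification shows that each cozero element of $\open_M(a)$ descends from a cozero element of $M$ by lifting the interpolating scale through the surjective frame homomorphism $j^*\colon M\twoheadrightarrow \open_M(a)$, which is well behaved on pseudocomplements via the identity $j^*(x)^{*_{\open_M(a)}}=(x\wedge a)^*=j^*(x^*)$, and by exploiting a scale witnessing $a\in \coz M$ to upgrade the rather below relation inside $\open_M(a)$ to the rather below relation in $M$. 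Once this lemma is in place, the remainder of the argument is a formal combination of the image/preimage adjunction with Theorem~\ref{theorem.b}, Remark~\ref{cs.beta}, and Remark~\ref{disc}.
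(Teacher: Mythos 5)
Your proposal is correct and follows essentially the same route as the paper's proof: the forward implications use the image/preimage adjunction to get $f[C]\cap S=\OS$ with $f[C]$ a cozero sublocale (the $\co$-preserving hypothesis) and then invoke the separation property of $M$, while the converses probe $M$ with the embedding of a cozero sublocale, which is $\co$-preserving because cozero sublocales of cozero sublocales are cozero sublocales of the ambient locale. The only cosmetic deviations are that in part (2) you work from the element-wise definition $c^*\vee c^{**}=1$ and build the clopen $\close_M(c^*)$ instead of quoting directly that basically disconnected means $\mathfrak{S}_\Sc$-disconnected, and that you justify the $\co$-preservation of $j\colon\open_M(a)\hookrightarrow M$ via $z$-embeddability of cozero sublocales where the paper cites the Ball--Walters-Wayland result.
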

	\begin{proof}
		Let $M$ be an $F$-frame (resp. basically disconnected frame) and $f\colon L\to M$ be a $\co$-preserving localic map. Let $C\in \Sc(L)$ and $D\in \Sc(M)$ (resp. $D$ and open sublocale of $M$) such that $C$ and $f_{-1}[D]$ are completely separated. In particular, $C\cap f_{-1}[D]=\OS$. Hence, $C\subseteq f_{-1}[D^\#]$. By the image/preimage adjunction $f[C]\subseteq D^\#$, equivalently $f[C] \cap D=\OS$, and $f[C]$ is a cozero sublocale ($f$ is $\co$-preserving).
		Since $M$ is an $F$-frame (resp. basically disconnected), $f[C]$ and $D$ are completely separated and $C\subseteq f_{-1}[f[C]]$.\\
		Conversely, suppose every $\co$-preserving map is an $\mathfrak{S}_\Sc\beta$-map (resp. $\mathfrak{S}_\open\beta$-map). Let $C_1,C_2\in \Sc(L)$ (resp. $C_1\in \Sc(L)$ and $C_2$ an open sublocale) such that $C_1\cap C_2=\OS$. Consider the localic embedding $j\colon C_1\hookrightarrow L$. It is a $\co$-preserving map. Indeed, every $C\in \Sc(C_1)$, since $C_1$ is a cozero sublocale, is a cozero sublocale of $L$ (recall \cite[Proposition ]{BW}); that is, $j[C]=C\in \Sc(L)$. Trivially $C_1$ and $j_{-1}[C_2]$ are completely separated in $C_1$, since $j_{-1}[C_2]=C_1\cap C_2=\OS$. By assumption, there is $ D\in \Sc(L)$ such that $C_1\subseteq D$ and $D$ is completely separated from $C_2$. In particular, $C_1$ and $C_2$ are completely separated, as required. 
		
	\end{proof}
	
	A localic map $f\colon L\to M$ is \emph{$\cb$-preserving} if $a\cb b$ implies $f(a)\cb f(b)$ for every $a,b\in L$. The frame homomorphisms corresponding to these maps were introduced in \cite{Remote} and are called assertive. It seems natural (recall Theorem~\ref{theorem.b}~\ref{3}) to ask if $\mathfrak{S}\beta$-maps are related to $\cb$-presserving maps. In \cite[Examples 3.13]{closed} the authors give examples showing that the notions of $\mathfrak{S}_\close\beta$-maps and $\cb$-preserving are not related. Nevertheless, $\cb$-preserving are precisely the $\mathfrak{S}_\open\beta$-maps.
	\begin{theorem}\label{ass}
		Let $f\colon L\to M$ be a localic map, $f$ is an $\mathfrak{S}_\open\beta$-map if and only if it is $\cb$-preserving.
	\end{theorem}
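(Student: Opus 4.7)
My approach will be to reformulate the $\mathfrak{S}_\open\beta$-map condition into a form that is directly comparable with $\cb$-preservation. The key preparatory move is to invoke Remark~\ref{cs.beta}(viii), which allows me to replace the open-sublocale condition of Theorem~\ref{theorem.b}(ii) by its closed-sublocale counterpart. This is essential because Proposition~\ref{cs.cb} characterises $a\cb b$ as complete separation of $\open(a)$ and $\close(b)$, so I want one sublocale of each kind appearing on each side of the condition.

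Specialising Remark~\ref{cs.beta}(viii) to $\mathfrak{S}_\open$ with $S=\open_M(c)$, and using \eqref{preimage.close.open} to compute $f_{-1}[\open_M(c)]=\open_L(h(c))$ and $f_{-1}[\close_M(b)]=\close_L(h(b))$, together with the identification $\close_L(a)\subseteq\close_L(h(b))\iff h(b)\le a$ and two applications of Proposition~\ref{cs.cb}, the condition for $f$ to be an $\mathfrak{S}_\open\beta$-map becomes
\[
(\star)\qquad \forall c\in M,\ \forall a\in L,\quad h(c)\cb a\ \Longrightarrow\ \exists\,b\in M\text{ with }h(b)\le a\text{ and }c\cb b.
\]

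It then suffices to verify $(\star)\Longleftrightarrow f$ is $\cb$-preserving, which is routine manipulation of the adjunction $h\dashv f$ combined with the elementary fact that $\cb$ is downward-closed in its first argument and upward-closed in its second. If $f$ is $\cb$-preserving and $h(c)\cb a$, I set $b:=f(a)$: the counit $h\circ f\le\mathrm{id}_L$ gives $h(b)\le a$, $\cb$-preservation applied to $h(c)\cb a$ gives $f(h(c))\cb f(a)=b$, and the unit $c\le f(h(c))$ then yields $c\cb b$. Conversely, if $(\star)$ holds and $x\cb y$ in $L$, I set $c:=f(x)$ and $a:=y$: the counit gives $h(c)\le x\cb y$, hence $h(c)\cb a$, so $(\star)$ produces $b\in M$ with $h(b)\le y$ and $f(x)\cb b$; the adjunction converts $h(b)\le y$ into $b\le f(y)$, and $f(x)\cb b\le f(y)$ gives $f(x)\cb f(y)$. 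The only non-routine step of the whole argument is the initial passage through Remark~\ref{cs.beta}(viii); once the condition is in the closed-sublocale form, Proposition~\ref{cs.cb} and the unit/counit of the adjunction handle the rest automatically.
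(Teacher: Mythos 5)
Your proof is correct and follows essentially the same route as the paper's: both arguments pass through the closed-sublocale reformulation of the $\mathfrak{S}_\open\beta$-condition (Remark~\ref{cs.beta}(viii) together with Proposition~\ref{cs.cb}), choose the same witnesses ($b=f(a)$, respectively $f(b)$ for the converse), and close the argument with the unit/counit of $h\dashv f$ and monotonicity of $\cb$. The only difference is cosmetic: you translate everything into element-level statements, while the paper phrases the same steps in the language of sublocales.
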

	\begin{proof}
		Let $a,b\in L$ such that $a\cb b$; in other words, $\open_L(a)$ is completely separated from $\close_L(b)$ (Proposition~\ref{cs.cb}). We will show $\open_M(f(a))$ and $\close_M(f(a))$ are completely separated in $M$. Note that:
		\[\close_L(a)\subseteq f_{-1}[f[\close_L(a)]]\subseteq f_{-1}[\overline{f[\close_L(a)]}]=f_{-1}[\close_M(f(a))]\]
		By taking complements, we obtain $f_{-1}[\open_M(f(a))]\subseteq \open_L (a)$.
		Thus, $f_{-1}[\open_M(f(a))]$ and $\close_L(b)$ are completely separated in $L$. Since $f$ is an $\mathfrak{S}_\open\beta$-map, there is $x\in M$ such that $\close_M(x)$ is completely separated from $\open_M(f(a))$ in $M$, and $\close_L(b)\subseteq f_{-1}[\close_M(x)]$. Furthermore, 
		\[\close_L(b)\subseteq f_{-1}[\close_M(x)]\iff f[\close_L(b)]\subseteq \close_M(x)\iff \close_M(f(b))=\overline{ f[\close_L(b)]}\subseteq \close_M(x).\]
		Hence, $\close_M(f(b))$ and $\open_M(f(a))$ are completely separated in $M$, as required. \\
		Conversely, assume $f$ is $\cb$-preserving and $b\in L$ and $a\in M$ such that $\close_L(b)$ and $f_{-1}[\open_M(a)]$ are completely separated. Equivalently, $h(a)\cb b$, where $h$ is the left adjoint (frame homomorphism) of $f$. By assumption, $ f(h(a))\cb f(b)$. Since $a\leq f(h(a))$, we have $a\cb f(b)$ meaning $\open_L(a)$ is completely separated from $\close_M(f(b))$. Furtheremore, 
		\[\close_L(b)\subseteq \close_L(h(f(b)))= f_{-1}[\close_M(f(b))]\]
		again for the inclusion above we are using the fact that $f$ and $h$ are adjoints.
	\end{proof}
	
	Using the definition of $\mathfrak{S}\beta$-map and Theorem~\ref{theorem.b}, we put together all the characterizing conditions of $\cb$-preserving maps. Recall the notation in diagram~\eqref{diagram.b} and the fact that $\tbigwedge \open (a)=a^*$,
	\begin{corollary}
		Let $f\colon L\to M$ be a localic map, the following are equivalent:
		\begin{enumerate}[label=\textup{(\roman*)}]
			\item $f$ is $\cb$-preserving.
			\item $\overline{\beta _L\left[f_{-1}[\open_M(a)]\right]}^{\beta L}= f_{-1}^\beta\left[\overline{\beta _M[\open_M(a)]}^{\beta M}\right]$ for every $a\in M$.
			\item If an open sublocale $\open_L(a)$ is completely separated from $f_{-1}[\open_M(x)]$ for some $x\in M$, then there is an open sublocale $\open_M(b)$ such that $\open_L(a)\subseteq f_{-1}[\open_M(b)]$ and $\open_M(b)$ is completely separated from $\open_M(x)$ in $M$. 
			\item $h^\beta(\beta_M(x^*))=\beta_L(h(x)^*)$ for every $x\in M$.
			\item For every $a\in L$ such that $a\cb h(x)^*$ there exists $b\in M$ such that $a\leq h(b)$ and $b\cb x^*$.
		\end{enumerate}
	\end{corollary}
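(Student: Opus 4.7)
The plan is to read this corollary as a direct specialization of the preceding machinery to the selection $\mathfrak{S}_\open$ of open sublocales, using only one non-trivial identity: $\tbigwedge \open_M(x) = x^*$ for every $x \in M$ (recall $\overline{\open(x)} = \close(x^*)$, so $\tbigwedge \open(x) = \tbigwedge \close(x^*) = x^*$).

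First, I would invoke Theorem~\ref{ass} to identify condition (i) with the statement that $f$ is an $\mathfrak{S}_\open \beta$-map. Condition (ii) is then literally the definition of $\mathfrak{S}_\open \beta$-map evaluated on the sublocale $\open_M(a) \in \mathfrak{S}_\open M$, so (i)$\iff$(ii) is immediate. To get (i)$\iff$(iv), I would use Remark~\ref{remark.g}: it says $f$ is an $\mathfrak{S}\gamma$-map iff $h^\gamma(\gamma_M(\tbigwedge S)) = \gamma_L(\tbigwedge f_{-1}[S])$ for every $S \in \mathfrak{S}M$. Applied with $\gamma = \beta$, $S = \open_M(x)$, and the identity $\tbigwedge \open_M(x) = x^*$, together with $\tbigwedge f_{-1}[\open_M(x)] = \tbigwedge \open_L(h(x)) = h(x)^*$ (using \eqref{preimage.close.open}), this collapses to $h^\beta(\beta_M(x^*)) = \beta_L(h(x)^*)$, which is (iv).

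For the remaining equivalences (ii)$\iff$(iii) and (ii)$\iff$(v), I would apply Theorem~\ref{theorem.b} with $\mathfrak{S} = \mathfrak{S}_\open$. Condition (iii) of the corollary is exactly Theorem~\ref{theorem.b}\ref{2} when $S = \open_M(x)$, since complete separation is phrased in terms of the sublocale $\open_M(x)$ itself (no rewriting needed). Condition (v) of the corollary is Theorem~\ref{theorem.b}\ref{3} after substituting $\tbigwedge f_{-1}[\open_M(x)] = h(x)^*$ and $\tbigwedge \open_M(x) = x^*$.

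There is essentially no obstacle: every equivalence follows by substitution into an already proved general theorem, and the only point that requires even a moment's thought is translating infima of open sublocales into pseudocomplements via $\tbigwedge \open(x) = x^*$. I would present the argument as a short chain citing Theorem~\ref{ass}, Remark~\ref{remark.g}, and Theorem~\ref{theorem.b} in turn, with one line noting the pseudocomplement identity.
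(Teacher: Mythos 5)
Your proposal is correct and follows essentially the same route as the paper: the paper also obtains the corollary by combining Theorem~\ref{ass} with the definition of $\mathfrak{S}_\open\beta$-map, Remark~\ref{remark.g}, and Theorem~\ref{theorem.b} specialized to $\mathfrak{S}=\mathfrak{S}_\open$, using precisely the identity $\tbigwedge\open(a)=a^*$ (and hence $\tbigwedge f_{-1}[\open_M(x)]=h(x)^*$) to translate infima into pseudocomplements.
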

	
	\begin{remark}\label{clop.ass}
		Clearly, if a map is $\cb$-preserving the image of complemented elements is complemented. Thus, if $f\colon L\to M$ is an $\mathfrak{S}_\open\beta$-map then $\overline{f[S]}$ is clopen for every clopen sublocale $S$ of $L$. 
	\end{remark}
	\begin{proposition}\label{no}
		Let $L$ be a an $\mathfrak{S}$-disconnected locale. If $f\colon L\to M$ is an $\mathfrak{S}_\open\beta$-map then $\overline{f[S]}$ is clopen for every $S\in \mathfrak{S}L$.
	\end{proposition}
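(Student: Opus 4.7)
The strategy is to reduce to the clopen case already handled by Remark~\ref{clop.ass}, using $\mathfrak{S}$-disconnectedness to replace $S$ by its closure.

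First I would invoke Remark~\ref{disc}: since $L$ is $\mathfrak{S}$-disconnected, for every $S\in\mathfrak{S}L$ the closure $\overline{S}$ is a clopen sublocale of $L$. Next, by Theorem~\ref{ass}, an $\mathfrak{S}_\open\beta$-map is the same thing as a $\cb$-preserving map, and Remark~\ref{clop.ass} records that $\cb$-preserving localic maps send clopens to sublocales whose closure is clopen. Applying this to the clopen sublocale $\overline{S}$ gives that $\overline{f[\overline{S}]}$ is clopen in $M$.

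It remains to identify $\overline{f[S]}$ with $\overline{f[\overline{S}]}$. Using the closure formula \eqref{closure} together with the fact that $f$ preserves arbitrary meets, we have
\[
\overline{f[S]}=\close_M\bigl(\tbigwedge f[S]\bigr)=\close_M\bigl(f(\tbigwedge S)\bigr).
\]
On the other hand $\overline{S}=\close_L(\tbigwedge S)=\ur(\tbigwedge S)$ so $\tbigwedge\overline{S}=\tbigwedge S$, and therefore
\[
\overline{f[\overline{S}]}=\close_M\bigl(f(\tbigwedge\overline{S})\bigr)=\close_M\bigl(f(\tbigwedge S)\bigr)=\overline{f[S]}.
\]
Combining the two observations, $\overline{f[S]}$ is clopen, which is the claim.

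I do not expect any real obstacle: all the heavy lifting was done in Theorem~\ref{ass} and in the characterization of $\mathfrak{S}$-disconnectedness via clopenness of closures (Remark~\ref{disc}). The only minor point to be careful about is the little identity $\overline{f[S]}=\overline{f[\overline{S}]}$, which follows immediately from meet-preservation of $f$ and the explicit description of the closure as $\close_L(\tbigwedge S)$.
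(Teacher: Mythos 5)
Your proof is correct and follows essentially the same route as the paper: Remark~\ref{disc} gives that $\overline{S}$ is clopen, Theorem~\ref{ass} together with Remark~\ref{clop.ass} gives that $\overline{f[\overline{S}]}$ is clopen, and the identification $\overline{f[\overline{S}]}=\overline{f[S]}$ (which the paper uses implicitly, and which you justify correctly via $\overline{f[T]}=\close_M(f(\tbigwedge T))$ and $\tbigwedge\overline{S}=\tbigwedge S$) finishes the argument.
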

	\begin{proof}
		Let $S\in \mathfrak{S}L$, by Remark~\ref{disc}, $\overline{S}$ is clopen. Hence, using Theroem~\ref{ass} (in particular, Remark~\ref{clop.ass}), we get $\overline{f[\overline{S}]}=\overline{f[S]}$ is clopen.
	\end{proof}
	
	From Proposition~\ref{no} one gets that if $L$ is extremally diconnected, then every localic $\mathfrak{S}_\open\beta$-map $f\colon L\to M$ is nearly open.\\ 
	
	
	We know that the image of an extremally disconnected frame under an open localic map is extremally disconnected. In fact, we obtain the following result:
	
	\begin{proposition}\label{ed.onto}
		Let $f\colon L\to M$ be a dense $\mathfrak{S}_\open\beta$-localic map.
		\begin{enumerate}
			\item If $L$ extremally disconnected then so is $M$.
			\item If $L$ basically disconnected then so is $M$.
		\end{enumerate}
	\end{proposition}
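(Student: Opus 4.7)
The plan is to reduce both statements to showing that for any relevant $b \in M$ (arbitrary in case~(1), cozero in case~(2)), the closure $\overline{\open_M(b)}$ is clopen in $M$, which, via the usual reformulation of $\mathfrak{S}_\open$- and $\mathfrak{S}_\Sc$-disconnectedness (Remark~\ref{disc} together with the equality $\overline{\open(b)}=\close(b^*)$), characterises extremal and basic disconnectedness, respectively. The bridge between hypotheses on $L$ and conclusions on $M$ will be Proposition~\ref{no} applied to the sublocale $f_{-1}[\open_M(b)]=\open_L(h(b))$ of $L$.

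The key computation is the identity $\overline{f[f_{-1}[\open_M(b)]]}=\overline{\open_M(b)}$ for dense $f$. One direction is immediate from the image/preimage adjunction. For the reverse, I would use the closure formula
\[
\overline{f[\open_L(h(b))]}=\close_M\bigl(\tbigwedge f[\open_L(h(b))]\bigr)=\close_M\bigl(f(h(b)^*)\bigr),
\]
since $f$ preserves meets and $\tbigwedge \open_L(h(b))=h(b)^*$. Now the localic-map identity $f(h(b)\to 0_L)=b\to f(0_L)$ together with density ($f(0_L)=0_M$) yields $f(h(b)^*)=b^*$, so $\overline{f[\open_L(h(b))]}=\close_M(b^*)=\overline{\open_M(b)}$. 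This is the only real calculation in the argument and, I expect, the main potential obstacle if one is not careful about the role played by density.

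For part~(1): since $L$ is extremally disconnected (equivalently $\mathfrak{S}_\open$-disconnected) and $f$ is an $\mathfrak{S}_\open\beta$-map, Proposition~\ref{no} applied to the open sublocale $S=\open_L(h(b))$ gives that $\overline{f[S]}$ is clopen in $M$. By the identity just established, this closure equals $\overline{\open_M(b)}$, so $\overline{\open_M(b)}$ is clopen and hence $M$ is extremally disconnected.

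For part~(2): take $b\in \coz M$. Since frame homomorphisms preserve cozero elements, $h(b)\in \coz L$, so $\open_L(h(b))\in \mathfrak{S}_\Sc L$. Because $\mathfrak{S}_\Sc L\subseteq \mathfrak{S}_\open L$, any $\mathfrak{S}_\open\beta$-map is in particular an $\mathfrak{S}_\Sc\beta$-map (this is the first item of the Remark following Theorem~\ref{theorem.b}). Thus Proposition~\ref{no} applies with $\mathfrak{S}=\mathfrak{S}_\Sc$ and yields that $\overline{f[\open_L(h(b))]}$ is clopen, which by the same identity is $\overline{\open_M(b)}$. So every cozero sublocale of $M$ has clopen closure, and therefore $M$ is basically disconnected.
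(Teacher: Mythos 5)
Your proof is correct, but it takes a genuinely different route from the paper's. The paper verifies the separation definition directly: given disjoint $\open_M(a)$ and $\open_M(b)$ (with $b\in\coz~M$ in the basically disconnected case), it pulls them back, uses the disconnectedness of $L$ to completely separate $\overline{f_{-1}[\open_M(a)]}$ from $f_{-1}[\open_M(b)]$, invokes the $\mathfrak{S}_\open\beta$-map property in the form of Theorem~\ref{theorem.b}/Remark~\ref{cs.beta} to produce $\close_M(x)$ completely separated from $\open_M(b)$ with $\overline{f_{-1}[\open_M(a)]}\subseteq f_{-1}[\close_M(x)]$, and then uses density to identify $\overline{f[\overline{f_{-1}[\open_M(a)]}]}$ with $\close_M(a^*)=\overline{\open_M(a)}$, concluding via Remark~\ref{cs.closure} that $\open_M(a)$ and $\open_M(b)$ are completely separated. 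You instead reduce disconnectedness of $M$ to the clopen-closure criterion of Remark~\ref{disc} and quote Proposition~\ref{no} (hence, implicitly, Theorem~\ref{ass}: $\mathfrak{S}_\open\beta$-maps are $\cb$-preserving, so closures of images of clopen sublocales are clopen), applied to $S=\open_L(h(b))=f_{-1}[\open_M(b)]$; both arguments hinge on the same density computation $f(h(b)^*)=b^*$, giving $\overline{f[\open_L(h(b))]}=\close_M(b^*)=\overline{\open_M(b)}$. What your route buys is brevity, since it recycles Proposition~\ref{no}; what the paper's self-contained argument buys is portability, since it uses only the defining separation property of $\mathfrak{S}\beta$-maps and therefore transfers essentially verbatim to Proposition~\ref{F.onto}, where the map is only assumed to be an $\mathfrak{S}_\Sc\beta$-map and Proposition~\ref{no} (which needs the $\mathfrak{S}_\open\beta$ hypothesis) is unavailable. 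One small simplification: in part (2) your remark that an $\mathfrak{S}_\open\beta$-map is an $\mathfrak{S}_\Sc\beta$-map is superfluous, because the hypothesis on the map in Proposition~\ref{no} is that it be an $\mathfrak{S}_\open\beta$-map for an arbitrary selection $\mathfrak{S}$; you only need $L$ to be $\mathfrak{S}_\Sc$-disconnected and $\open_L(h(b))$ to be a cozero sublocale of $L$.
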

	\begin{proof}
		Let $f\colon L\to M$ be an onto $\mathfrak{S}_\open\beta$-localic map, and let $L$ be an extremally disconnected (resp. basically disconnected) locale.
		Let $a,b\in M$ (resp. $a\in M$ and $b\in \coz~M$) such that $\open_M(a)\cap \open_M(b)=\OS$, then $ f_{-1}[\open_M(a)]\cap f_{-1}[\open_M(b)]=\OS$. Since $L$ is extremally disconnected (resp. basically disconnected), $f_{-1}[\open_M(a)]$ and  $f_{-1}[\open_M(b)]$ are completely separated in $L$. By Remark~\ref{cs.closure}, $\overline{f_{-1}[\open_M(a)]}$ and  $f_{-1}[\open_M(b)]$ are completely separated. We know $f$ is an $\mathfrak{S}_\open\beta$-localic map, so there is $x\in M$ such that $\close_M(x)$ is completely separated from $\open_M(b)$ and 
		\begin{equation*}\label{ppp}
			\overline{f_{-1}[\open_M(a)]}\subseteq f_{-1}[\close_M(x)].
		\end{equation*}
		Thus, by the image/preimage adjunction and the fact that $\close_M(x)$ is closed, we have that  \[\overline{f[\overline{f_{-1}[\open_M(a)]}]}~\subseteq~\close_M(x). \]Note that 
		\begin{equation*}
			\overline{f[\overline{f_{-1}[\open_M(a)]}]}= \overline{f[\overline{\open_L(h(a))}]}= \overline{f[\close_L(h(a)^*)]}=\close_M(f(h(a)^*))=\close_M(a^*)=\overline{\open_M(a)}
		\end{equation*}
		where the fourth equality holds because $f$ is dense which implies that $f(0)=0$ and $f(h(a)^*))=f(h(a)\rightarrow 0)=a\rightarrow f(0)= a\rightarrow 0=a^*$. 
		Hence, since $\open_M(a)\subseteq \overline{\open_M(a)}\subseteq \close_M(x)$, the sublocales $\open_M(a)$ and $\open_M(b)$ are completely separated in $M$ showing that $M$ is extremally disconnected (resp. basically disconnected).
	\end{proof}
	
	
	Since the localic preimage preserves cozero sublocales, the proof of  Proposition~\ref{F.onto} follows similarly as the proof of Proposition~\ref{ed.onto}. One takes $a,b\in \coz~L$ (resp. $a\in L$ and $b\in \coz~L$) and uses the fact that $L$ is an $F$-frame (resp. basically disconnected frame) and that $f$ is an $\mathfrak{S}_\Sc\beta$-localic map.
	
	\begin{proposition}\label{F.onto}
		Let $f\colon L\to M$ be an dense $\mathfrak{S}_\Sc\beta$-localic map.
		\begin{enumerate}
			\item If $L$ basically disconnected then so is $M$.
			\item If $L$ is an $F$-frame then so is $M$.
		\end{enumerate}
	\end{proposition}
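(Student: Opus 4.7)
The plan is to adapt the argument of Proposition~\ref{ed.onto} by replacing open sublocales of $M$ with cozero sublocales where the $\mathfrak{S}_\Sc\beta$-hypothesis demands it. The crucial observation is that the sublocale of $M$ handed to the hypothesis via Remark~\ref{cs.beta}(viii) must lie in $\Sc(M)$; the role played by the other sublocale is unrestricted.

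For part~(1), I take $a\in\coz~M$ and $b\in M$ with $\open_M(a)\cap\open_M(b)=\OS$. Since $h$ preserves cozero elements, $f_{-1}[\open_M(a)]=\open_L(h(a))$ is a cozero sublocale of $L$ and $f_{-1}[\open_M(b)]=\open_L(h(b))$ is open; they are disjoint (because $h(a)\wedge h(b)=h(0)=0$), so by basic disconnectedness of $L$ they are completely separated, and hence so are $\overline{f_{-1}[\open_M(b)]}$ and $f_{-1}[\open_M(a)]$ (Remark~\ref{cs.closure}). Now $\open_M(a)\in\Sc(M)$, so the $\mathfrak{S}_\Sc\beta$-property, in the form of Remark~\ref{cs.beta}(viii), yields $x\in M$ such that $\overline{f_{-1}[\open_M(b)]}\subseteq f_{-1}[\close_M(x)]$ and $\close_M(x)$ is completely separated from $\open_M(a)$ in $M$. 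Pushing this inclusion forward through the image/preimage adjunction and using density of $f$ to get $f(h(b)^*)=b^*$ exactly as in Proposition~\ref{ed.onto}, one obtains $\overline{\open_M(b)}=\close_M(b^*)\subseteq \close_M(x)$, so $\open_M(a)$ and $\open_M(b)$ are completely separated in $M$. By Remark~\ref{disc} this suffices to conclude that $M$ is basically disconnected.

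Part~(2) is handled identically after starting with $a,b\in\coz~M$: both preimages $\open_L(h(a))$ and $\open_L(h(b))$ are cozero sublocales, so the $F$-frame hypothesis on $L$ provides their complete separation; one then invokes the $\mathfrak{S}_\Sc\beta$-property with the cozero sublocale $\open_M(a)$ as the selected one and finishes exactly as in part~(1). No additional ideas beyond those of Proposition~\ref{ed.onto} should be required; the only bookkeeping obstacle is making sure that the sublocale of $M$ playing the role of the selected one lies in $\Sc(M)$, which forces the \emph{other} preimage (open in part~(1), cozero in part~(2)) to be fed to the hypothesis, and verifying that both preimages land in the class to which the disconnectedness or $F$-frame hypothesis on $L$ applies.
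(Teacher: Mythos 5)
Your proposal is correct and follows essentially the same route as the paper, which itself only sketches the proof as "mimic Proposition~\ref{ed.onto}, taking the appropriate elements in $\coz~M$ and using that localic preimages preserve cozero sublocales"; your only deviations are a harmless relabelling of which element is the cozero one and the (unneeded but valid) appeal to Remark~\ref{disc} at the end of part (1).
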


	\subsection{Special cases}
	In this section we take a look at when the notions of disconnectedness and normality coincide. 
	A locale is \emph{Boolean} if every element is complemented; equivalently if every closed and every open sublocale is clopen. On the other hand, a locale $L$ is a \emph{$P$-frame} if every cozero element is complemented (\cite{BW}); again, this is equivalent to saying that every cozero or every zero sublocale is clopen (in a $P$-frame the classes of cozero, zero and clopen sublocales coincide). 
	\begin{proposition}
		$L$ is boolean if and only if it is $\mathfrak{S}_\open$-normally separated if and only if it is $\mathfrak{S}_\close$-disconnected. 
	\end{proposition}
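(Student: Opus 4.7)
The plan is to observe first that the two middle conditions are trivially the same statement: complete separation is symmetric in its two arguments, so requiring every open sublocale to be completely separated from every disjoint closed sublocale (the $\mathfrak{S}_\open$-normally separated condition) coincides word-for-word with requiring every closed sublocale to be completely separated from every disjoint open sublocale (the $\mathfrak{S}_\close$-disconnected condition). So it suffices to establish that $L$ is Boolean if and only if every pair consisting of an open sublocale and a closed sublocale whose intersection is $\OS$ is completely separated.

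For the forward implication, assume $L$ is Boolean and take any $\open_L(a)$ and $\close_L(b)$ with $\open_L(a)\cap \close_L(b)=\OS$. Since $b$ is complemented, $\close_L(b)=\open_L(b^*)$ is clopen, and likewise $\open_L(a)$ is clopen. By Remark~\ref{cs.closure}(3) disjoint clopen sublocales are completely separated, which finishes this direction.

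For the converse, assume the complete separation condition and fix an arbitrary $a\in L$; the goal is to produce a complement of $a$. The sublocales $\open_L(a)$ and $\close_L(a)$ are complementary in $\Ss(L)$, so in particular $\open_L(a)\cap\close_L(a)=\OS$. By hypothesis they are completely separated in $L$, so Remark~\ref{cs.normal} supplies cozero elements $u,v\in\coz\,L$ with
\[
\open_L(a)\subseteq \open_L(u),\qquad \close_L(a)\subseteq \open_L(v),\qquad \open_L(u)\cap\open_L(v)=\OS.
\]
Unpacking the inclusions via \eqref{opensub} and \eqref{closedsub}: the first gives $a\le u$; the second gives $\close_L(a)\cap\close_L(v)=\OS$, equivalently $a\vee v=1$; and the third gives $u\wedge v=0$. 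Combining, $a\wedge v\le u\wedge v=0$, so $v$ is a complement of $a$ in $L$. Since $a$ was arbitrary, $L$ is Boolean.

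The only potentially delicate step is invoking Remark~\ref{cs.normal} to pass from abstract complete separation to the witnessing cozero elements $u,v$; once this is in hand, the rest is a direct translation between the sublocale inequalities and frame-theoretic identities using \eqref{opensub} and \eqref{closedsub}.
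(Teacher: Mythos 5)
Your proof is correct and follows essentially the same route as the paper: the opening observation (that $\mathfrak{S}_\open$-normal separation and $\mathfrak{S}_\close$-disconnectedness coincide by symmetry of complete separation) is exactly the paper's first step. For the equivalence with Booleanness the paper just invokes Remark~\ref{disc} (a locale is $\mathfrak{S}$-disconnected iff $\overline{S}$ is clopen for each $S\in\mathfrak{S}L$, which for $\mathfrak{S}_\close$ says every $\close(a)$ is clopen), whereas you unfold the same content directly --- disjoint clopens are completely separated for one direction, and Remark~\ref{cs.normal} together with the translations $a\le u$, $a\vee v=1$, $u\wedge v=0$ giving a complement of $a$ for the other --- so your argument is a correct, self-contained version of the cited remark rather than a genuinely different proof.
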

	\begin{proof}
		It is obvious by the definitions that $\mathfrak{S}_\open$-normally separated and $\mathfrak{S}_\close$-disconnected coincide. 
		Now, by Remark~\ref{disc}, $L$ is  $\mathfrak{S}_\close$-disconnected if and only if $\overline{\close(a)}=\close(a)$ is clopen for every $a\in L$. 
	\end{proof}
	\begin{proposition}The following are equivalent for a locale $L$:
		\begin{enumerate}[label=\textup{(\roman*)}]
			\item\label{p1} $L$ is a $P$-frame.
			\item\label{p2} $L$ is $\mathfrak{S}_\Sc$-normally separated.
			\item\label{p3} $L$ is $\mathfrak{S}_\Sz$-disconnected
		\end{enumerate}
	\end{proposition}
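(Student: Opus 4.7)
The plan is to prove the cycle (i)$\iff$(iii) and (i)$\iff$(ii) using the prior remarks and the standard facts about zero and cozero sublocales in a $P$-frame.

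For (i)$\iff$(iii), the key observation is that every zero sublocale $Z\in\Sz(L)$ is closed, so $\overline Z=Z$. By Remark~\ref{disc}, $\mathfrak{S}_\Sz$-disconnectedness is equivalent to $\overline Z$ being clopen for every $Z\in\Sz(L)$, i.e.\ every zero sublocale being clopen. This is exactly the $P$-frame condition recalled in the paragraph preceding the statement.

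For (i)$\implies$(ii), I would argue that in a $P$-frame every cozero sublocale $\open(a)$ (with $a\in\coz L$) is clopen, and hence its complement $\close(a)$ is a clopen closed sublocale, so in particular a zero sublocale. Given any closed $K$ disjoint from $\open(a)$, we have $K\subseteq\open(a)^\#=\close(a)$, and the two disjoint zero sublocales $\open(a)$ and $\close(a)$ witness that $\open(a)$ and $K$ are completely separated.

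For (ii)$\implies$(i), fix $a\in\coz L$. The sublocales $\open(a)$ and $\close(a)$ are complements in $\Ss(L)$, so in particular disjoint, so $\mathfrak{S}_\Sc$-normal separatedness gives disjoint zero sublocales $Z_1,Z_2$ with $\open(a)\subseteq Z_1$ and $\close(a)\subseteq Z_2$. Since $L=\open(a)\vee\close(a)\subseteq Z_1\vee Z_2$, we get $Z_1\vee Z_2=L$, so $Z_1,Z_2$ are themselves complements. By uniqueness of complements in the distributive lattice $\Ss(L)$ the inclusions must be equalities, so $\open(a)=Z_1$ is a zero sublocale, and therefore $a$ is complemented. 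The mild obstacle is this last step: one must notice that two pairs of complements, one containing the other coordinatewise, must coincide — a short consequence of distributivity (from $\open(a)\vee Z_2\supseteq Z_1\vee Z_2=L$ and $\open(a)\cap Z_2\subseteq Z_1\cap Z_2=\OS$ one concludes $\open(a)$ is the complement of $Z_2$, forcing $\open(a)=Z_1$).
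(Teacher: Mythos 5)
Your argument is correct, but it is organized differently from the paper's, which proves the cycle (i)$\Rightarrow$(ii)$\Rightarrow$(iii)$\Rightarrow$(i). Your (i)$\Rightarrow$(ii) coincides with the paper's step (there $C$ and $C^\#$ play the roles of your $\open(a)$ and $\close(a)$). After that the treatments diverge: the paper goes (ii)$\Rightarrow$(iii) by observing that a zero sublocale disjoint from an open sublocale $\open(a)$ forces $\open(a)\subseteq\open(b)$ for the corresponding cozero $\open(b)$, so the separation of $\open(b)$ from $\close(b)$ does the job, and then closes the cycle with a one-line (iii)$\Rightarrow$(i): complete separation of $\close(a)$ and $\open(a)$ gives $\close(a)\cap\overline{\open(a)}=\OS$, i.e.\ $a\vee a^*=1$. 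You instead use (i) as a hub: (i)$\iff$(iii) by combining Remark~\ref{disc} with the fact that zero sublocales are closed and the clopen characterization of $P$-frames (this mirrors how the paper handles the Boolean analogue in the preceding proposition, so it is a legitimate shortcut), and (ii)$\Rightarrow$(i) directly, via uniqueness of complements in the distributive coframe $\Ss(L)$ to force $\open(a)=Z_1$. That last argument is sound; the only rough spot is the inclusion $\open(a)\vee Z_2\supseteq Z_1\vee Z_2$, which does not follow from $\open(a)\subseteq Z_1$ alone --- the quickest justification of $\open(a)\vee Z_2=L$ is simply $\close(a)\subseteq Z_2$, whence $\open(a)\vee Z_2\supseteq\open(a)\vee\close(a)=L$. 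Alternatively, as in the paper, $\overline{\open(a)}\subseteq Z_1$, $\close(a)\subseteq Z_2$ and $Z_1\cap Z_2=\OS$ already give $a\vee a^*=1$ with no appeal to uniqueness of complements. Your decomposition buys a shorter (i)$\iff$(iii); the paper's cycle buys an explicit (ii)$\Rightarrow$(iii), exhibiting directly how cozero--closed separation yields zero--open separation.
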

	\begin{proof}
		\ref{p1}$\implies$\ref{p2}: Let $C\in \Sc(L)$ and $a\in L$ such that $C\cap \close(a)=\OS$. Since $L$ is a $P$-frame $C$ is also a zero sublocale; thus, $C$ and $C^\#$ are disjoint zero sublocales and $\close(a)\subseteq C^\#$. \\
		\ref{p2}$\implies$\ref{p3}: Let $\close(b)\in \Sz(L)$ and $a\in L$ such that $\close(a)\cap \open (b)=\OS$. We have that, $\open(a)\in \Sc(L)$, $\open (b)\subseteq \open (a)$, and $\close(a)\cap \open (a)=\OS$. By assumption, $\close(b)$ and $\open (b)$ are completeley separated; in particular, so are $\close(b)$ and $\open (a)$, as required.\\
		\ref{p3}$\implies$\ref{p1}: Let $a\in \coz~L$, then $\close(a)\cap \open(a)=\OS$. By assumption, they are completely separated; in particular, we have $\close(a)\cap \overline{\open(a)}=\OS$. Equivalentely, $a\vee a^*=1$. Thus every cozero element of $L$ is complemented. 
	\end{proof}
	
	Then we obtain as corollaries of Theorems~\ref{theorem.b} and \ref{ch.op}:
	
	\begin{corollary}\label{pframe}
		The following are equivalent for a locale $M$:
		\begin{enumerate}[label=\textup{(\roman*)}]
			\item $M$ is a $P$-frame. 
			\item Every closed localic map $f\colon L\to M$ is a $\mathfrak{S}_\Sc\beta$-map.
			\item Every $\z$-closed localic map $f\colon L\to M$  is a $\mathfrak{S}_\Sc\beta$-map.
			\item Every proper localic map $f\colon L\to M$  is a $\mathfrak{S}_\Sc\beta$-map.
			\item  Every open localic map $f\colon L\to M$ is a $\mathfrak{S}_\Sz\beta$-map.
			\item Every $\co$-open localic map $f\colon L\to M$ is a $\mathfrak{S}_\Sz\beta$-map.
		\end{enumerate}
	\end{corollary}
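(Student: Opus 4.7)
The plan is to derive this as an immediate corollary of the two main classification theorems of the section, namely Theorem~\ref{theorem.bmap} (for $\mathfrak{S}\beta$-maps characterizing $\mathfrak{S}$-normal separation) and Theorem~\ref{ch.op} (for $\mathfrak{S}\beta$-maps characterizing $\mathfrak{S}$-disconnectedness), combined with the preceding proposition that identifies $P$-frames with both $\mathfrak{S}_\Sc$-normally separated and $\mathfrak{S}_\Sz$-disconnected locales. No new calculation is really needed; the content is in packaging the existing equivalences correctly.

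First I would verify that the complementedness hypothesis in each theorem is automatic for the two selections involved. Every cozero sublocale of $M$ is open, hence complemented in $\Ss(M)$, so $\mathfrak{S}_\Sc$ meets the hypothesis of Theorem~\ref{theorem.bmap}; dually every zero sublocale of $M$ is closed and therefore complemented, so $\mathfrak{S}_\Sz$ meets the hypothesis of Theorem~\ref{ch.op}. With this out of the way, applying Theorem~\ref{theorem.bmap} to the selection $\mathfrak{S}_\Sc$ gives the chain of equivalences between $M$ being $\mathfrak{S}_\Sc$-normally separated and each of the conditions \textit{``every $\z$-closed (resp.\ closed, resp.\ proper) localic map $f\colon L\to M$ is an $\mathfrak{S}_\Sc\beta$-map''}. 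Dually, applying Theorem~\ref{ch.op} to the selection $\mathfrak{S}_\Sz$ yields the equivalence between $M$ being $\mathfrak{S}_\Sz$-disconnected and each of the conditions \textit{``every open (resp.\ $\co$-open) localic map $f\colon L\to M$ is an $\mathfrak{S}_\Sz\beta$-map''}.

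Finally I would invoke the preceding proposition, which states that a locale is a $P$-frame if and only if it is $\mathfrak{S}_\Sc$-normally separated if and only if it is $\mathfrak{S}_\Sz$-disconnected. Splicing this three-way equivalence together with the two chains produced in the previous step yields all the listed conditions as mutually equivalent, which is the statement of the corollary.

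There is no real obstacle here: the only point where one has to be slightly careful is to check that the complementedness hypothesis of Theorem~\ref{theorem.bmap} and Theorem~\ref{ch.op} is satisfied by the two selections at hand, and this follows at once because cozero sublocales are open and zero sublocales are closed. The proof is therefore a one-paragraph appeal to these three results.
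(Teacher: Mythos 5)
Your proposal is correct and matches the paper's approach exactly: the corollary is obtained by applying Theorem~\ref{theorem.bmap} to the selection $\mathfrak{S}_\Sc$ and Theorem~\ref{ch.op} to $\mathfrak{S}_\Sz$ (both hypotheses are met since cozero sublocales are open and zero sublocales are closed, hence complemented), then splicing these chains with the preceding proposition identifying $P$-frames with $\mathfrak{S}_\Sc$-normally separated and $\mathfrak{S}_\Sz$-disconnected locales. The paper states this as an immediate corollary with no further argument, so your packaging is precisely the intended proof.
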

	
	\begin{corollary}\label{char.boolean}The following are equivalent for a locale $M$:
		\begin{enumerate}[label=\textup{(\roman*)}]
			\item $M$ is a boolean. 
			\item Every closed localic map $f\colon L\to M$ is a $\mathfrak{S}_\open\beta$-map.
			\item Every $\z$-closed localic map $f\colon L\to M$  is a $\mathfrak{S}_\open\beta$-map.
			\item Every proper localic map $f\colon L\to M$  is a $\mathfrak{S}_\open\beta$-map.
			\item  Every open localic map $f\colon L\to M$ is a $\mathfrak{S}_\close\beta$-map.
			\item Every $\co$-open localic map $f\colon L\to M$ is a $\mathfrak{S}_\close\beta$-map.
		\end{enumerate}
	\end{corollary}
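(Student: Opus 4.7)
The plan is to recognize this corollary as a direct application of Theorem~\ref{theorem.bmap} and Theorem~\ref{ch.op}, combined with the equivalences in the preceding proposition. The key observation that makes everything align is that the two selections $\mathfrak{S}_\open$ and $\mathfrak{S}_\close$ automatically satisfy the complementation hypothesis of both theorems: for any $a \in L$, the open sublocale $\open(a)$ and the closed sublocale $\close(a)$ are complements of each other in $\Ss(L)$. Therefore the side condition ``every $T \in \mathfrak{S}M$ is complemented'' is trivially met in both theorems for these two selections.

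First, I would handle the equivalences (i)$\Leftrightarrow$(ii)$\Leftrightarrow$(iii)$\Leftrightarrow$(iv). By the preceding proposition, $M$ is Boolean if and only if $M$ is $\mathfrak{S}_\open$-normally separated. Since $\mathfrak{S}_\open M$ consists of complemented sublocales, Theorem~\ref{theorem.bmap} applied to the selection $\mathfrak{S} = \mathfrak{S}_\open$ gives precisely that $M$ is $\mathfrak{S}_\open$-normally separated if and only if every $\z$-closed (equivalently, closed, equivalently, proper) localic map $f \colon L \to M$ is an $\mathfrak{S}_\open\beta$-map. Chaining these equivalences yields (i)$\Leftrightarrow$(ii)$\Leftrightarrow$(iii)$\Leftrightarrow$(iv).

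Next, I would handle (i)$\Leftrightarrow$(v)$\Leftrightarrow$(vi). The same preceding proposition also gives that $M$ is Boolean if and only if $M$ is $\mathfrak{S}_\close$-disconnected. Since $\mathfrak{S}_\close M$ consists of complemented sublocales, Theorem~\ref{ch.op} applied to the selection $\mathfrak{S} = \mathfrak{S}_\close$ gives that $M$ is $\mathfrak{S}_\close$-disconnected if and only if every open (equivalently, every $\co$-open) localic map $f \colon L \to M$ is an $\mathfrak{S}_\close\beta$-map. This yields (i)$\Leftrightarrow$(v)$\Leftrightarrow$(vi).

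There is essentially no obstacle in this proof: the corollary is a ``plug-and-play'' assembly of the two master theorems with the previously established characterization of Boolean locales. The only minor point worth making explicit in the write-up is the verification of the complementation hypothesis, which is immediate from the fact that $\open(a)$ and $\close(a)$ are mutual complements in the coframe $\Ss(M)$.
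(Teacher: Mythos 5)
Your proposal is correct and matches the paper's own derivation: the corollary is obtained by combining the preceding proposition (Boolean $\iff$ $\mathfrak{S}_\open$-normally separated $\iff$ $\mathfrak{S}_\close$-disconnected) with Theorem~\ref{theorem.bmap} for the selection $\mathfrak{S}_\open$ and Theorem~\ref{ch.op} for the selection $\mathfrak{S}_\close$, the complementation hypothesis holding since open and closed sublocales are complemented in $\Ss(M)$. Your explicit check of that hypothesis is the only point the paper leaves tacit.
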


	\begin{remark}\label{remarkoc}
		Furthermore, since every closed and every open sublocale in a Boolean frame is clopen, a map $f\colon L\to M$ with $M$ boolean is an $\mathfrak{S}_\close\beta$-map if and only if it is a $\mathfrak{S}_\open\beta$-map. 
		Similarly, since zero sublocales and cozero sublocales coincide in a $P$-frame $M$, then $f\colon L\to M$ is an  $\mathfrak{S}_\Sz\beta$-map if an only if it is an $\mathfrak{S}_\Sc\beta$-map.
	\end{remark}
	
	The following proposition is an improvement of \cite[Proposition 4.4]{Socle}, in the sense that we are adding another equivalent condition.
	\begin{proposition}\label{char.boolean2} The following are equivalent for a frame $M$:
		\begin{enumerate}[label=\textup{(\roman*)}]
			\item\label{clo1} $M$ is boolean (resp. $P$-frame)
			\item\label{clo2} Every localic map $f\colon L\to M$ is an $\mathfrak{S}_\open\beta$-map (resp. $\mathfrak{S}_\Sc\beta$-map).
			\item\label{clo3} Every localic map $f\colon L\to M$ is an $\mathfrak{S}_\close\beta$-map. (resp. $\mathfrak{S}_\Sz\beta$-map)			
		\end{enumerate}
	\end{proposition}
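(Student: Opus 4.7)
The plan is to prove the two nontrivial directions separately, noting that all four implications can be handled uniformly once one observes that in a Boolean (resp.\ $P$-)frame the relevant sublocales are all clopen. One direction will be essentially free from the corollaries already established, and the other will be a short direct verification via Theorem~\ref{theorem.b}.

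First, the implications \ref{clo2}$\implies$\ref{clo1} and \ref{clo3}$\implies$\ref{clo1} follow immediately from Corollary~\ref{char.boolean} (for the Boolean case) and Corollary~\ref{pframe} (for the $P$-frame case). If every localic map into $M$ is an $\mathfrak{S}_\open\beta$-map, then in particular every closed localic map is, and then $M$ is Boolean by Corollary~\ref{char.boolean}. Similarly, if every localic map is an $\mathfrak{S}_\close\beta$-map, then every open (or $\co$-open) localic map is, again giving $M$ Boolean. The parenthetical $P$-frame versions use the same trick applied to Corollary~\ref{pframe}.

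For \ref{clo1}$\implies$\ref{clo2} in the Boolean case, let $f\colon L\to M$ be an arbitrary localic map and let $S\in\mathfrak{S}_\open M$. Using condition \ref{2} of Theorem~\ref{theorem.b}, suppose $\open_L(a)$ is completely separated from $f_{-1}[S]$. Completely separated sublocales are disjoint, so $\open_L(a)\cap f_{-1}[S]=\OS$; since the image/preimage adjunction makes $f_{-1}[-]$ a coframe homomorphism that preserves complements, this gives
\[\open_L(a)\subseteq f_{-1}[S]^\#=f_{-1}[S^\#].\]
Because $M$ is Boolean every open sublocale is clopen, hence $S^\#$ is also clopen and of the form $\open_M(b)$ for some complemented $b\in M$. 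The sublocales $S$ and $S^\#$ are then disjoint clopens, and therefore completely separated in $M$ by Remark~\ref{cs.closure}(3). This verifies Theorem~\ref{theorem.b}\ref{2}, so $f$ is an $\mathfrak{S}_\open\beta$-map.

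The implication \ref{clo1}$\implies$\ref{clo3} in the Boolean case is then automatic by Remark~\ref{remarkoc}, which identifies $\mathfrak{S}_\open\beta$- and $\mathfrak{S}_\close\beta$-maps whenever the codomain is Boolean. For the $P$-frame case, the same clopen-complementation argument applies verbatim with $\mathfrak{S}_\open$ replaced by $\mathfrak{S}_\Sc$, since in a $P$-frame every cozero (and every zero) sublocale is clopen; Remark~\ref{remarkoc} then equates $\mathfrak{S}_\Sc\beta$- and $\mathfrak{S}_\Sz\beta$-maps into a $P$-frame, completing the proof. There is no serious obstacle here: the only point that must be invoked carefully is the preservation of complements by $f_{-1}[-]$, together with the structural fact about clopen sublocales in Boolean or $P$-frames.
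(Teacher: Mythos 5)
Your proof is correct and follows essentially the same route as the paper: the backward implications come from Corollaries~\ref{char.boolean} and~\ref{pframe}, the bridge between the two selections is Remark~\ref{remarkoc}, and the direct implication is verified through Theorem~\ref{theorem.b}\ref{2} using that disjoint clopen sublocales are completely separated. The only (immaterial) difference is that you verify \ref{clo1}$\implies$\ref{clo2} directly with $S$ open (resp.\ cozero), while the paper verifies \ref{clo1}$\implies$\ref{clo3} with $S$ closed (resp.\ zero); the two are interchangeable via Remark~\ref{remarkoc}.
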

	\begin{proof}
		Clearly, by Remark~\ref{remarkoc} \ref{clo2}$\iff$\ref{clo3} holds. By Corollary~ \ref{char.boolean} and Corollary~\ref{pframe}, \ref{clo3}$\implies$\ref{clo1}.
		Let us show \ref{clo1}$\implies$\ref{clo3}. Let $a\in L$ and $b\in M$ (resp. $b\in \coz M$) such that 
		\[\open_L(a)\cap f_{-1}[\close_M(b)]=\OS.\]
		Then $\open_L(a)\subseteq f_{-1}[\open_M(b)]$, and since $M$ is boolean (resp. $P$-frame), $\open_M(b)$ and $\close_M(b)$ are clopen and hence zero sublocales. Thus $\open_M(b)$ and $\close_M(b)$ are completely separated in $M$.
	\end{proof}
	
	To close this section we offer a table showing the different frames that $\mathfrak{S}$-normally separation and $\mathfrak{S}$-disconnectedness describe:
	\begin{table}[h!]
		\begin{center}
			\begin{tabular}{c|c|c}
				\hline
				Selection $\mathfrak{S}$& $\mathfrak{S}$-normally separated & $\mathfrak{S}$-disconnected\\
				\hline
				$\mathfrak{S}_\close $ & normal & Boolean\\
				$\mathfrak{S}_\Sz$&$\delta$-normally separated & $P$-frame\\
				$\mathfrak{S}_\open$&Boolean & Extremally disconnected\\
				$\mathfrak{S}_\Sc $ & $P$-frame & Basically disconnected\\
				$\mathfrak{S}_{\mathsf{reg}\close}$ & Almost normal & Extremally disconnected\\
				$\mathfrak{S}_{\mathsf{reg}\open} $ & Extremally disconnected & Extremally disconnected
			\end{tabular}
		\end{center}
	\end{table}
	
	\section{$\mathfrak{S}\lambda$-maps}
	
	
	Now we will look at the $\mathfrak{S}\gamma$-maps when $\gamma$ is the Lindel\"{o}fication $\lambda$. 
	These maps are a generalization of the $\lambda$-maps introduced in \cite{open,closed}. Indeed, the $\mathfrak{S}_\close\lambda$-maps are precisely the right adjoints of the $\lambda$-maps, and the characterization in \cite[Example 1]{openErratum} comes as a corollary of the equivalence \ref{L1}$\iff$\ref{L4} in the theorem below.
	
	\begin{theorem}\label{Lambdamap}
		Let $f\colon L\to M$ be a localic map, then the following are equivalent:
		\begin{enumerate}[label=\textup{(\roman*)}]
			\item\label{L1} $f$ is a $\mathfrak{S}\lambda$-map.
			\item\label{L2} If $f_{-1}[S]\subseteq Z$ for $S\in\mathfrak{S}M$ and $Z\in \Sz(L)$ then there is $D\in \Sz(M) $ such that $S\subseteq D$ and $f_{-1}[D]\subseteq Z$. 			
			\item\label{L3} If $f_{-1}[S]\cap C=\OS$ for $S\in\mathfrak{S}M$ and $C \in \Sc (L)$ then there is $D\in \Sc(M) $ such that $S\cap D=\OS$ and $C\subseteq f_{-1}[D]$.
			\item\label{L4} If $x\leq \tbigwedge f_{-1}[S]$ for $x\in\coz~L$ and $S\in\mathfrak{S}L$, there exists $y\in \coz~M$ such that $y\leq \tbigwedge S$ and $x\leq h(y)$.
		\end{enumerate}
		Furthermore, if every sublocale in $\mathfrak{S}M$ is complemented the following conditions are also equivalent:
		\begin{enumerate}[label=\textup{(v)}]
			\item\label{44} If $C\subseteq f_{-1}[S^\#]$ for $S\in\mathfrak{S}M$ and $C\in \Sc(L)$ then there is $D\in \Sc(M)$ such that $D\subseteq S^\#$ and $C\subseteq f_{-1}[D]$.
		\end{enumerate}
		\begin{enumerate}[label=\textup{(vi)}]
			\item\label{55} If $Z\vee f_{-1}[S^\#]=L$ for $S\in\mathfrak{S}M$ and $Z\in \Sz(L)$ then there is $Z'\in \Sz(M)$ such that $Z'\vee S^\#=M$ and $f_{-1}[Z']\subseteq Z$.
		\end{enumerate}
	\end{theorem}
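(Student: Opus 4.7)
The plan is to prove (i)$\Leftrightarrow$(iv) directly using the concrete description of the Lindel\"{o}fication, and then to derive (ii), (iii), (v), (vi) as sublocale-theoretic rewordings of (iv).

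First, by Remark~\ref{remark.g}, $f$ is an $\mathfrak{S}\lambda$-map precisely when $\lambda_L(\tbigwedge f_{-1}[S])\le h^{\lambda}(\lambda_M(\tbigwedge S))$ for every $S\in\mathfrak{S}M$, the reverse inequality being automatic. Substituting $\lambda_L(a)=\downarrow_{\co} a$ and $h^{\lambda}(I)=\downarrow_{\co}h[I]$, this containment of $\sigma$-ideals of $\coz~L$ reads verbatim as condition (iv).

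Next, to establish (ii)$\Leftrightarrow$(iv), I would write zero sublocales as $Z=\close_L(x)$ and $D=\close_M(y)$ with $x\in\coz~L$ and $y\in\coz~M$, and invoke $\close_L(x)=\ur x$ together with \eqref{preimage.close.open}: the three conditions $f_{-1}[S]\subseteq Z$, $S\subseteq D$, and $f_{-1}[D]\subseteq Z$ translate respectively into $x\le\tbigwedge f_{-1}[S]$, $y\le\tbigwedge S$, and $x\le h(y)$ (the last because $\close_L(h(y))\subseteq\close_L(x)$ iff $x\le h(y)$), which is exactly (iv). For (iii)$\Leftrightarrow$(iv), I would write $C=\open_L(x)$ and $D=\open_M(y)$ with $x,y$ cozero; the complementarity of $\close_L(x)$ and $\open_L(x)$ in $\Ss(L)$ turns $f_{-1}[S]\cap\open_L(x)=\OS$ into $f_{-1}[S]\subseteq\close_L(x)$, i.e.\ $x\le\tbigwedge f_{-1}[S]$, and the same for the $M$-side; finally $\open_L(x)\subseteq f_{-1}[\open_M(y)]=\open_L(h(y))$ is $x\le h(y)$, using that $\open_L$ is an order embedding (immediate from the order-reversing embedding property of $\close_L$ combined with the complementarity between $\close_L(a)$ and $\open_L(a)$, or directly from \eqref{opensub}).

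Finally, under the additional hypothesis that every $S\in\mathfrak{S}M$ is complemented, (v) and (vi) are just (iii) and (ii) written in terms of supplements. Since $f_{-1}[-]$ is a coframe homomorphism preserving complements, $f_{-1}[S^{\#}]$ is the complement of $f_{-1}[S]$ in $\Ss(L)$ whenever $S$ is complemented, so $C\subseteq f_{-1}[S^{\#}]\iff C\cap f_{-1}[S]=\OS$ and $D\subseteq S^{\#}\iff D\cap S=\OS$, matching (iii); that gives (v)$\Leftrightarrow$(iii). For (vi), zero sublocales are closed, hence complemented; by \eqref{fsuppl}, $Z\vee f_{-1}[S^{\#}]=L\iff Z^{\#}\subseteq f_{-1}[S^{\#}]\iff f_{-1}[S]\subseteq Z$ and analogously $Z'\vee S^{\#}=M\iff S\subseteq Z'$, which turns (vi) into (ii). The argument presents no real conceptual obstacle; the whole thing is a dictionary between frame inequalities, sublocale inclusions, and supplement identities. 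The one point that deserves explicit verification is the order-embedding property of $\open_L$, which I would derive from the fact that the order on complemented elements of a lattice matches complement reversal, together with the order-reversing embedding $\close_L(a)=\ur a$.
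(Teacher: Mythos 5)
Your proposal is correct and follows essentially the same route as the paper: both reduce (i) to the element-level condition (iv) via Remark~\ref{remark.g} and the explicit formulas $\lambda_L(a)=\downarrow_{\co}a$, $h^{\lambda}(I)=\downarrow_{\co}h[I]$, and then obtain (ii), (iii), (v), (vi) by the standard dictionary between cozero elements, zero/cozero sublocales, and complements/supplements (the paper merely states the (v), (vi) translations as "readily seen", which you spell out). No gaps; only the ordering of the equivalences differs.
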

	\begin{proof}
		\ref{L1}$\iff$\ref{L2}: By Remark~\ref{remark.g} $f$ is $\mathfrak{S}\lambda$-map if and only if  \[\lambda_L(\tbigwedge f_{-1}[S])\leq h^\lambda \lambda_M(\tbigwedge S).\]
		Since $\lambda_L(\tbigwedge f_{-1}[S])=\left\{b\in \coz L\mid b\leq \tbigwedge f^{-1}[S]\right\}$ and 
		\begin{align*}
			h^\lambda \lambda_M(\tbigwedge S)= \left\{c\in \coz L\mid c\leq h(a)\text{ for some } a\in \coz M \text{ with } a\leq \tbigwedge S\right\},
		\end{align*}
		then $f$ is $\mathfrak{S}\lambda$-map if and only if for every $b\in \coz L $ such that $b\leq \tbigwedge f_{-1}[S]$ there is $a\in \coz M$ with $a\leq \tbigwedge S$ and $b\leq h(a)$. Equivalently, using closed sublocales and \eqref{preimage.close.open}, for every $Z\in \Sz(L)$ such that $\overline{f_{-1}[S]}\subseteq Z$ there is $D\in \Sz(M)$ such that $\overline{S}\subseteq D$ and $f_{-1}[D]\subseteq Z$. Since zero sublocales are closed and a sublocale is contained in a closed sublocale if and only if its closure is, we get \ref{L2}.  \\
		\ref{L2}$\iff$\ref{L3} is obvious. \\
		\ref{L2}$\iff$\ref{L4} is immediate by taking closed sublocales and using equations \eqref{closedsub} and \eqref{preimage.close.open}.\\
		When every $S\in \mathfrak{S}M$ is complemented, using the fact that $f_{-1}[-]$ preserves complements, one can readily see that \ref{L2}$\iff$\ref{L3}$\iff$\ref{44}$\iff$\ref{55}.
	\end{proof}
	
	%

	We say a locale $L$ is \emph{ $\mathfrak{S}$-coz separating} if whenever $\open(a)\cap S=\OS$ for $a\in L$ and $S\in \mathfrak{S}L$, then there is $C\in \Sc(L)$ such that $\open (a)\subseteq C$ and $C\cap S=\OS$. Equivalently, if $S\subseteq \close(a)$ for some $a\in L$ and $S\in \mathfrak{S}L$, then there is $Z\in \Sz(L)$ such that $S\subseteq Z\subseteq \close(a)$.
	
	\begin{remark}
		Clearly, since completely separated sublocales are disjoint, for a selection $\mathfrak{S}$ we have that:
		\begin{itemize}
			\item $\mathfrak{S}\beta$-map $\implies \mathfrak{S}\lambda$-map;
			\item $\mathfrak{S}$-disconnected $\implies \mathfrak{S}$-coz separating. 
		\end{itemize}
	\end{remark}
	
	\begin{example}
		A localic map $f\colon L\to M$ is \emph{$\z$-heavy} if $f_{-1}[\close_M(a)]=\OS$ for some $a\in M$, then there is $Z\in \Sz(M)$ such that $\close_M(a)\subseteq Z$ and $f_{-1}[Z]=\OS$ (these are the right adjoints of $coz$-heavy frame homomorphisms \cite{Mdube}).
		Every $\mathfrak{S}_\close\lambda$-map is $\z$-heavy. Indeed, if $f\colon L\to M$ is a $\mathfrak{S}_\close\lambda$-map and let $a\in M$ such that $f_{-1}[\close_M(a)]=\OS$. By \ref{L2} of \ref{Lambdamap}, there is $Z\in \Sz(M)$ such that $\close_M(a)\subseteq Z$ and $f_{-1}[Z]\subseteq \OS$.
		
		Following classical topology, we say a sublocale $S$ of $L$ is \emph{normally placed} whenever the embedding $j\colon S\hookrightarrow L$ is $\z$-heavy. That is, if $\close_L(a)\cap S=\OS$ for some $a\in L$, then there is $Z\in \Sz(L)$ such that $\close_L(a)\subseteq Z$ and $Z\cap S=\OS$. Equivlently, if $ S\subseteq \open_L(a)$ for some $a\in L$, then there is $C\in \Sc(L)$ such that $S\subseteq C\subseteq \open_L(a)$.
		Obviously, every cozero sublocale is normally placed. Note that every zero sublocale is normally placed if and only if $L$ is $\delta$-normally separated. 
		Moreover, if every open sublocale is coz-heavy then $L$ is perfectly normal. Recall that a locale $L$ is \emph{perfectly normal} if $L=\coz~L$; that is every open (resp. closed) sublocale is a cozero (resp. zero) sublocale. \\
	\end{example}
	
	Let us now look at the analogous result of \ref{ch.op} for $\mathfrak{S}\lambda$-maps:
	
	\begin{theorem}\label{coz.sep}
		Let $\mathfrak{S}$ be a selection of sublocales, and let $M$ be a locale such that every $T\in\mathfrak{S}M$ is complemented. The following are equivalent:
		\begin{enumerate}[label=\textup{(\roman*)}]
			\item\label{l.1} $M$ is $\mathfrak{S}$-coz separating. 
			\item\label{l.2} Every $\co$-open map is a $\mathfrak{S}\lambda$-map.
			\item\label{l.3} Every open map is a $\mathfrak{S}\lambda$-map.
		\end{enumerate}
	\end{theorem}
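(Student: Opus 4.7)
The proof will mirror the structure of Theorem~\ref{ch.op}, using the $\mathfrak{S}\lambda$ characterization via condition~\ref{L3} of Theorem~\ref{Lambdamap} in place of the complete-separation condition, and the trivial implication (iii)$\Rightarrow$(ii) being replaced by (ii)$\Rightarrow$(iii) since every open map is $\co$-open.

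For \ref{l.1}$\Rightarrow$\ref{l.2}, assume $M$ is $\mathfrak{S}$-coz separating and let $f\colon L\to M$ be $\co$-open. I will verify the criterion \ref{L3} of Theorem~\ref{Lambdamap}: take $S\in\mathfrak{S}M$ and $C\in\Sc(L)$ with $f_{-1}[S]\cap C=\OS$. Since $S$ is complemented and $f_{-1}[-]$ preserves complements, $C\subseteq f_{-1}[S^\#]$, and then the image/preimage adjunction gives $f[C]\cap S=\OS$. Because $f$ is $\co$-open, $f[C]$ is an open sublocale $\open_M(a)$ of $M$. By $\mathfrak{S}$-coz separation applied to $\open_M(a)$ and $S$, there is a cozero sublocale $D\in\Sc(M)$ with $\open_M(a)\subseteq D$ and $D\cap S=\OS$. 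Finally $C\subseteq f_{-1}[f[C]]\subseteq f_{-1}[D]$, so \ref{L3} holds and $f$ is an $\mathfrak{S}\lambda$-map.

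The implication \ref{l.2}$\Rightarrow$\ref{l.3} is immediate, since every open localic map is $\co$-open.

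For \ref{l.3}$\Rightarrow$\ref{l.1}, I will test the definition on an arbitrary witness: take $S\in\mathfrak{S}M$ and $a\in M$ with $\open_M(a)\cap S=\OS$, and consider the open embedding $j\colon \open_M(a)\hookrightarrow M$; this is an open localic map because open sublocales of an open sublocale are open in the ambient locale, so by assumption $j$ is an $\mathfrak{S}\lambda$-map. The plan is to apply \ref{L3} of Theorem~\ref{Lambdamap} with $C$ equal to the whole sublocale $\open_M(a)$ (which is the top of $\Sc(\open_M(a))$, since the top element is always a cozero element). Since $j_{-1}[S]=\open_M(a)\cap S=\OS$, the hypothesis $C\cap j_{-1}[S]=\OS$ is trivial, so we obtain $D\in\Sc(M)$ with $D\cap S=\OS$ and $\open_M(a)=C\subseteq j_{-1}[D]=\open_M(a)\cap D$, hence $\open_M(a)\subseteq D$. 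This is exactly the witness required for $\mathfrak{S}$-coz separation. I expect no real obstacle; the only subtlety to flag is that $f[C]$ in the first implication is automatically open (not just a priori a cozero sublocale) under the weaker hypothesis of $\co$-openness, which is why $\mathfrak{S}$-coz separation, rather than the stronger $\mathfrak{S}$-disconnectedness, is the right condition on $M$.
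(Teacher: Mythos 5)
Your proposal is correct and follows essentially the same route as the paper's proof: both directions use criterion \ref{L3} of Theorem~\ref{Lambdamap}, with the image/preimage adjunction and complementedness of $S$ for \ref{l.1}$\Rightarrow$\ref{l.2}, and the open embedding $j\colon\open_M(a)\hookrightarrow M$ tested on the cozero sublocale $\open_M(a)$ of itself for \ref{l.3}$\Rightarrow$\ref{l.1}. Your explicit remarks (that $f[C]$ is open rather than cozero, and that open sublocales of an open sublocale are open in $M$) are points the paper leaves implicit, but the argument is the same.
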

	\begin{proof}
		\ref{l.1}$\implies$\ref{l.2}: Let $f \colon L\to M$ be a $\co$-open localic map. We will use Theorem~\ref{Lambdamap} to show $f$ is a $\mathfrak{S}\lambda$-map. Let $S\in \mathfrak{S}$ and $C\in \Sc(L)$ such that $C\cap f_{-1}[S]=\OS$, then $C\subseteq f_{-1}[S^\#]$. By the image/preimage adjunction we have $f[C]\subseteq S^\#$; equivalently, since $S$ is complemented, $f[C]\cap  S=\OS$. By assumption, there is $D\in \Sc(M)$ such that $D\cap S=\OS$ and $f[C]\subseteq D$ (equiv. $C\subseteq f_{-1}[D]$), as required.\\
		Since every open map is $\co$-open then clearly \ref{l.2}$\implies$\ref{l.3}.\\
		\ref{l.3}$\implies$\ref{l.1}: Let $a\in M$ and $S\in \mathfrak{S}M$ such that $\open (a)\cap S=\OS$. Consider the open localic embedding $j\colon\open(a)\hookrightarrow M$. By assumption, it is an $\mathfrak{S}\lambda$-map. Furthermore, the sublocale $\open(a)$ is a cozero sublocale of $\open(a)$ and $\open(a)\cap j_{-1}[S]=\open(a)\cap S=\OS$. Thus, there is $D\in \Sc(M)$ such that $D\cap S=\OS$ and $\open (a)\subseteq j_{-1}[D]=\open (a)\cap D$ (that is, $\open(a)\subseteq D$).
	\end{proof}
	
	Let us look at examples of $\mathfrak{S}$-coz separating locales. 
	Recall from \cite{BDGWW} that a locale $L$ is an \emph{$Oz$-frame} if every regular element is a cozero element\footnote{This is not how $Oz$-frames are traditionally defined, but we use one of its characterizations to make it easier for the reader.}, and it is a \emph{weakly $Oz$-frame} if every regular element $x\in L$ is equal to $c^*$ for some $c\in \coz~L$. By taking open sublocales in \cite[Proposition 2.2]{BDGWW} and \cite[Proposition 5.3]{BDGWW} (c.f. \cite[Section 4.3]{tesisA}) one can readily check that indeed $Oz$- and weakly $Oz$-frames coincide with $\mathfrak{S}_\open$- and $\mathfrak{S}_\Sc$-coz separating frames respectively. Thus, we get the following two corollaries of Theorem~\ref{coz.sep}.

	\begin{corollary}
		The following are equivalent for a locale $M$:
		\begin{enumerate}
			\item $M$ is an $Oz$-frame.
			\item Every $\co$-open map is a $\mathfrak{S}_\open\lambda$-map.
			\item Every open map is a $\mathfrak{S}_\open\lambda$-map.
		\end{enumerate}
	\end{corollary}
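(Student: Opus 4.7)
The plan is to derive this corollary as a direct specialization of Theorem~\ref{coz.sep} to the selection $\mathfrak{S} = \mathfrak{S}_\open$. Two preliminary facts must be checked: that every $T \in \mathfrak{S}_\open M$ is complemented in $\Ss(M)$, and that $Oz$-frames coincide with $\mathfrak{S}_\open$-coz separating locales. The first is immediate since $\open(a)$ has complement $\close(a)$ in $\Ss(M)$, so the standing hypothesis of Theorem~\ref{coz.sep} is automatic for $\mathfrak{S}_\open$.

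For the second, I would simply unfold the definitions. By \eqref{opensub}, $\open(a) \cap \open(b) = \open(a \wedge b) = \OS$ amounts to $a \leq b^*$; and any $C \in \Sc(M)$ with $\open(a) \subseteq C$ and $C \cap \open(b) = \OS$ (equivalently, $C \subseteq \close(b)$) must be of the form $\open(c)$ for some $c \in \coz M$ with $a \leq c \leq b^*$. Hence $\mathfrak{S}_\open$-coz separation of $M$ translates to the statement: for every $b \in M$ and every $a \leq b^*$, there is $c \in \coz M$ with $a \leq c \leq b^*$. Setting $a = b^*$ forces $b^* \in \coz M$, recovering the defining condition of an $Oz$-frame; conversely, when every regular element is cozero, the choice $c = b^*$ works for any $a \leq b^*$.

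With these two observations in place, Theorem~\ref{coz.sep} applied to $\mathfrak{S}_\open$ yields exactly the equivalence of the three conditions in the statement. The only substantive step is the translation between the sublocale formulation of $\mathfrak{S}_\open$-coz separation and the element-level $Oz$ condition, which the paper attributes to \cite[Proposition~2.2]{BDGWW}; everything else is a direct invocation of the already-proved theorem, so I do not anticipate any real obstacle here.
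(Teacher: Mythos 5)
Your proposal is correct and takes essentially the same route as the paper: specialize Theorem~\ref{coz.sep} to $\mathfrak{S}=\mathfrak{S}_\open$ (open sublocales being complemented, the standing hypothesis is automatic) and identify $Oz$-frames with $\mathfrak{S}_\open$-coz separating locales. The only difference is that you verify this identification directly by the element-level interpolation argument ($a\le b^*$ admits a cozero interpolant for all $a,b$ iff every regular element $b^*$ is cozero), whereas the paper delegates it to \cite[Proposition 2.2]{BDGWW}; your verification is sound.
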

	
	\begin{corollary}
		The following are equivalent for a locale $M$:
		\begin{enumerate}
			\item $M$ is a weakly $Oz$-frame.
			\item Every $\co$-open map is a $\mathfrak{S}_\Sc\lambda$-map.
			\item Every open map is a $\mathfrak{S}_\Sc\lambda$-map.
		\end{enumerate}
	\end{corollary}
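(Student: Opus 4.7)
The plan is to derive the corollary as a direct application of Theorem~\ref{coz.sep} specialized to the selection $\mathfrak{S}=\mathfrak{S}_\Sc$. First I would check that the hypothesis of Theorem~\ref{coz.sep} is met: every cozero sublocale of $M$ is complemented, its complement being the associated zero sublocale (as recalled in Subsection 1.4, where $\Sz(M)$ and $\Sc(M) = \mathsf{CoZS}(M)$ are explicitly noted to be complements of each other).

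Given the theorem, the whole content of the corollary is the equivalence ``$M$ is a weakly $Oz$-frame $\iff$ $M$ is $\mathfrak{S}_\Sc$-coz separating''. This is precisely the claim that the author flags in the paragraph preceding the corollary, obtained from \cite[Proposition~5.3]{BDGWW} by reading it through open sublocales. To make this explicit I would unwind $\mathfrak{S}_\Sc$-coz separating at the element level: using $\open_M(a)\cap \open_M(c)=\OS \iff a\wedge c =0 \iff a\leq c^*$ and $\open_M(a)\subseteq \open_M(d)\iff a\leq d$, the condition becomes
\[
\forall c\in \coz~M,\ \forall a\in M\ \text{with}\ a\leq c^*,\ \exists d\in \coz~M\ \text{with}\ a\leq d\leq c^*,
\]
i.e.\ $c^* = \tbigvee\{d\in \coz~M\mid d\leq c^*\}$ for every $c\in \coz~M$. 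Matching this element-level condition against the characterization of weakly $Oz$-frames in \cite[Proposition~5.3]{BDGWW} yields the equivalence with the stated definition (every regular element $x$ has the form $c^*$ for some $c\in \coz~M$): the nontrivial direction uses that each regular element is $c^*$ for some $c\in \coz~M$ to supply the required cozero witness $d$, while the other direction exploits that $c^*$ is itself regular.

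Once the equivalence ``weakly $Oz \iff \mathfrak{S}_\Sc$-coz separating'' is in place, Theorem~\ref{coz.sep} delivers the chain (i) $\iff$ (ii) $\iff$ (iii) verbatim (via the substitution $\mathfrak{S}=\mathfrak{S}_\Sc$). The only real step is the bookkeeping to translate between the sublocale-theoretic formulation of coz-separating and the $\coz~M$-element formulation of weakly $Oz$; everything else is a mechanical invocation of the already-proved Theorem~\ref{coz.sep}, and I would expect no further obstacles.
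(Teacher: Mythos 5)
Your overall strategy coincides with the paper's: check that every cozero sublocale of $M$ is complemented (correct, $\open(c)$ and $\close(c)$ are complements), identify ``weakly $Oz$'' with ``$\mathfrak{S}_\Sc$-coz separating'', and then quote Theorem~\ref{coz.sep} with $\mathfrak{S}=\mathfrak{S}_\Sc$. The paper does exactly this, obtaining the identification by reading \cite[Proposition 5.3]{BDGWW} through open sublocales. However, your explicit unwinding of ``$\mathfrak{S}_\Sc$-coz separating'' contains a genuine error at the ``i.e.''\ step. The quantified condition you correctly derive, namely that for every $c\in\coz M$ and every $a\leq c^*$ there is $d\in\coz M$ with $a\leq d\leq c^*$, is \emph{not} equivalent to $c^*=\tbigvee\{d\in\coz M\mid d\leq c^*\}$: taking $a=c^*$ shows it is equivalent to the much stronger statement that $c^*$ is itself a cozero element for every $c\in\coz M$ (cozero elements are only closed under countable joins, so being a join of cozero elements below it does not suffice). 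In fact, in any completely regular frame every element is the join of the cozero elements below it, so your reformulated condition holds vacuously and cannot characterize weakly $Oz$ frames.

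Because of this, the sketched equivalence with weak $Oz$ does not go through as described. For ``weakly $Oz$ $\Rightarrow$ separating'' you propose to use that the regular element $c^*$ equals $e^*$ for some $e\in\coz M$; but $e^*$ is not known to be a cozero element, so this does not supply the required cozero witness $d$ with $a\leq d\leq c^*$ (what is really needed is that $c^*\in\coz M$). Conversely, the separating condition only controls pseudocomplements of \emph{cozero} elements, so deducing that an arbitrary regular element $y^*$ has the form $c^*$ with $c\in\coz M$ is not the triviality you suggest; this bridge is precisely the content of the characterization in \cite[Proposition 5.3]{BDGWW} (cf.\ \cite[Section 4.3]{tesisA}) that the paper invokes. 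If you simply cite that result for the identification ``weakly $Oz$ $\iff$ $\mathfrak{S}_\Sc$-coz separating'', the remainder of your argument (complementedness of cozero sublocales and the application of Theorem~\ref{coz.sep}) is correct and matches the paper.
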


	\subsection{Generalizing $\mathfrak{S}\lambda$-maps}
	In the characterization of $\mathfrak{S}\beta$-maps (Theorem~\ref{theorem.b}\ref{2}), since we are working with complete separation, one can choose to work with open, closed, regular open, regular closed, zero or cozero sublocales (recall Remark~\ref{cs.beta}). This is not the case for $\mathfrak{S}\lambda$-maps. Thus, it is worth introducing more general classes of localic maps.\\
	Let $\mathfrak{S}$ and $\mathfrak{T}$ be two selections of sublocales. 
	We say that a localic map $f\colon L\to M$ is \emph{$\mathfrak{S}\mathfrak{T}\lambda$-map} if whenever $f_{-1}[S]\cap T=\OS$ for $S\in \mathfrak{S}M$ and $T\in \mathfrak{T}L$, there exists $T'\in \mathfrak{T}M$ such that $S\cap T'=\OS$ and $T\subseteq f_{-1}[T']$.
	
	\begin{remark}
		\begin{enumerate}[label=\textup{(\arabic*)}]
			\item Clearly, $\mathfrak{S}\mathfrak{T}_\Sc\lambda$-maps are precisely the $\mathfrak{S}\lambda$-maps.
			\item For $\mathfrak{T}=\mathfrak{T}_\open,\mathfrak{T}_\close,\mathfrak{T}_\Sc,\mathfrak{T}_\Sz,\mathfrak{T}_{\mathsf{reg}\open},\mathfrak{T}_{\mathsf{reg}\close}$ every $\mathfrak{S}\beta$-map is a $\mathfrak{S}\mathfrak{T}\lambda$-map. 
			\item If every $S\in \mathfrak{S}M$ is complemented and $S^\#\in \mathfrak{T}M$, then any map $f\colon L\to M$ is a $\mathfrak{S}\mathfrak{T}\lambda$-map. In particular, if $\mathfrak{S}$ selects complemented sublocales and $(\mathfrak{S}L)^\#\subseteq \mathfrak{T}L$ for every frame $L$ (where $(\mathfrak{S}L)^\#=\{S^\#\mid S\in\mathfrak{S}L\}$), then every localic map is an $\mathfrak{S}\mathfrak{T}\lambda$-map.
			\item For two selections $\mathfrak{S}_1$ and $\mathfrak{S_2}$, if $\mathfrak{S}_2\subseteq \mathfrak{S}_1$ then every $\mathfrak{S}_1\mathfrak{T}\lambda$-map is an $\mathfrak{S}_2\mathfrak{T}\lambda$-map.
			\item Two sublocales are completely separated if they are contained in disjoint zero sublocales; hence, $\mathfrak{S}_\Sz\mathfrak{T}_\Sz\lambda$-maps coincide with $\mathfrak{S}_\Sz\beta$-maps. 
			\item If a localic map $f\colon L\to M$ is $\mathfrak{T}$-preserving ($f[T]\in \mathfrak{T}M$ for every $T\in \mathfrak{T}L$), then it is an $\mathfrak{S}\mathfrak{T}\lambda$-map.
		\end{enumerate}
	\end{remark}
	
	\begin{proposition}
		Let $f\colon L\to M$ be a localic map, then 
		\begin{enumerate}[label=\textup{(\arabic*)}]
			\item\label{no1} $f$ is a $\mathfrak{S}_\open\mathfrak{T}_\open\lambda$-map if and only if $f$ is nearly open. 
			\item\label{no2} $f$ is a $\mathfrak{S}_\Sc\mathfrak{T}_\open\lambda$-map if and only if $f$ is $\co$-nearly open.
			\item\label{no3} $f$ is a $\mathfrak{S}_{\mathsf{reg}\open}\mathfrak{T}_\open\lambda$-map if and only if $f$ is $\ro$-nearly open.
		\end{enumerate}
	\end{proposition}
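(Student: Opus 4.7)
My plan is to treat all three statements uniformly by unwinding the $\mathfrak{S}\mathfrak{T}_\open\lambda$-map condition at the level of frame elements and showing that it coincides with the appropriate restriction of the identity $h(a^*)=h(a)^*$.

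First I would translate the hypothesis $f_{-1}[S]\cap T=\OS$ (where $T=\open_L(b)$ with $b\in L$, and $S=\open_M(a)$) into an inequality in $L$. By \eqref{preimage.close.open} one has $f_{-1}[\open_M(a)]=\open_L(h(a))$, and \eqref{opensub} gives $\open_L(h(a))\cap \open_L(b)=\open_L(h(a)\wedge b)$, which equals the void sublocale precisely when $h(a)\wedge b=0$, i.e.\ $b\leq h(a)^*$. Likewise $\open_M(a)\cap \open_M(c)=\OS$ is equivalent to $c\leq a^*$, and $\open_L(b)\subseteq f_{-1}[\open_M(c)]=\open_L(h(c))$ to $b\leq h(c)$. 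Thus the $\mathfrak{S}\mathfrak{T}_\open\lambda$-map condition reads: for every $a$ in the prescribed class (all of $M$ in \ref{no1}, $a\in\coz M$ in \ref{no2}, $a\in M^*$ in \ref{no3}) and every $b\in L$ with $b\leq h(a)^*$, there exists $c\in M$ with $c\leq a^*$ and $b\leq h(c)$.

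The next step is to show this is equivalent to the single inequality $h(a)^*\leq h(a^*)$ for every $a$ in the prescribed class. Necessity follows by testing with $b=h(a)^*$: any witness $c$ provided by the hypothesis satisfies $h(a)^*\leq h(c)\leq h(a^*)$. Conversely, given $b\leq h(a)^*$, the choice $c=a^*$ trivially satisfies $c\leq a^*$ and $b\leq h(a)^*\leq h(a^*)=h(c)$.

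Finally, because $h$ preserves finite meets, $h(a)\wedge h(a^*)=h(0)=0$, so $h(a^*)\leq h(a)^*$ always holds; hence the inequality $h(a)^*\leq h(a^*)$ is equivalent to the equality $h(a^*)=h(a)^*$. Comparing with the definitions of nearly open, $\co$-nearly open and $\ro$-nearly open from Section~1 (which demand $h(a^*)=h(a)^*$ for $a\in M$, $a\in\coz M$ and $a\in M^*$, respectively) then yields \ref{no1}, \ref{no2} and \ref{no3}. The only delicate point is the bookkeeping: $\mathfrak{S}$ controls which elements $a\in M$ are tested while $\mathfrak{T}_\open$ imposes no restriction on $b\in L$, which is precisely why the three parts differ only in the class over which $a$ ranges.
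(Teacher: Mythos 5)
Your proposal is correct and follows essentially the same route as the paper: it rewrites the $\mathfrak{S}\mathfrak{T}_\open\lambda$-condition elementwise as ``$h(a)\wedge b=0$ implies there is $c$ with $a\wedge c=0$ and $b\leq h(c)$'', then tests $b=h(a)^*$ for necessity and takes $c=a^*$ for sufficiency, with $a$ ranging over $M$, $\coz M$ or $M^*$ according to the selection. Your explicit remark that $h(a^*)\leq h(a)^*$ always holds (so the derived inequality upgrades to the defining equality of near openness) is a small clarification that the paper leaves implicit, but the argument is the same.
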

	\begin{proof}
		Let us first note that a localic map $f\colon L\to M$ is a $\mathfrak{S}_\open\mathfrak{T}_\open\lambda$-map if and only if 
		\begin{equation}\label{nop}
			h(a)\wedge b=0 \implies \exists \ d\in L\quad a\wedge d=0 \text{ and }b\leq h(d)
		\end{equation}
		for every $a\in M$ and $b\in L$.
		Similarly, $f$ is a $\mathfrak{S}_\Sc\mathfrak{T}_\open\lambda$-map (resp. $\mathfrak{S}_{\ro}\mathfrak{T}_\open\lambda$-map) if and only if \eqref{nop} holds for every $a\in \coz~L$ (resp. $a\in L^*$).\\
		\ref{no1}: Let $f\colon L\to M$ be an $\mathfrak{S}_\open\mathfrak{T}_\open$ and let $a\in M$.
		We always have that $h(a)\wedge h(a)^*=0$. By assumption, there is $d\in L$ such that $a\wedge d=0$ and $h(a)^*\leq h(d)$. Putting these two equations together we get that $h(a)^*\leq h(d)\leq h(a^*)$.
		Conversely, assume $f$ is nearly open, and let $a\in M$ and $b\in L$ such that $h(a)\wedge b=0$. Hence, $b\leq h(a)^*=h(a^*)$ and $a\wedge a^*=0$. Thus $f$ is an $\mathfrak{S}_\open\mathfrak{T}_\open\lambda$-map.\\
		Mimicking the argument above for $a\in \coz~M$ and $a\in L^*$ one gets the proof of \ref{no2} and \ref{no3} respectively. 
	\end{proof}
	
	\begin{remark}
		Note that $\mathfrak{S}_\open\mathfrak{T}_{\ro}$-are precisely the maps where 
		\[ h(a)\wedge b^*=0 \implies \exists d\in L\quad a\wedge d^*=0 \text{ and } b^*\leq h(d^*)\]
		for every $a\in M$ and $b\in L$. 
		Hence, if one take $b=h(a)$ one gets that $h(a)^*\leq h(d^*)\leq h(a^*)$, which proves that $\mathfrak{S}_\open\mathfrak{T}_{\ro}$-maps are nearly open. Similarly, one can see that $\mathfrak{S}_\Sc\mathfrak{T}_{\ro}$-maps are $\co$-nearly open, and that $\mathfrak{S}_{\ro}\mathfrak{T}_{\ro}$-maps are $\ro$-nearly open. \\
		Furthermore, assume $f$ is nearly open and $h(a)\wedge b^*=0$ for $a\in M$ and $b\in L$, then we get: 
		\begin{align*}
			h(a)\wedge b^*=0&\iff h(a)\leq b^{**}\iff a\leq f(b^{**})\implies a\leq f(b^{**})^{**}\\
			&\iff a\wedge f(b^{**})^*=0.
		\end{align*}
		Moreover, $b^*\leq h(f(b^{**})^*)$. Indeed, we know that $b^*\leq h(f(b^{**}))^*$ (because $ h(f(b^{**}))\leq b^{**}$) and since $f$ is nearly open $h(f(b^{**}))^*= h(f(b^{**})^*)$. Thus, we have shown $f$ is $\mathfrak{S}_\open\mathfrak{T}_{\ro}\lambda$-map.
	\end{remark}
	
	The following Lemma will allow us to compare some of the localic maps that we have introduced with closed and related localic maps. It is an easy and probably known result among point-free topologists, but no explicit register of it was found in the literature. In \cite[3]{Remote} the authors prove \ref{c1}, but require regularity; in fact, only subfitness in the codomain is needed. Recall from \cite{PP12} that a local is \emph{subfit} if 
	\[a\nleq b\implies \exists c \quad a\vee c=1\neq b\vee c\]
	for every $a,b\in L$.
	\begin{lemma}\label{close}
		Let $M$ be subfit locale and $f\colon L\to M$ a localic map then 
		\begin{enumerate}[label=\textup{(\arabic*)}]
			\item\label{c1} $f$ closed if and only if for every $a\in L$ and $b\in L$ 
			\[f(a\vee h(b))=1 \implies f(a)\vee b=1. \]
			\item\label{c2} $f$ is $\z$-closed if and only if for every $a\in \coz~L$ and $b\in L$ 
			\[f(a\vee h(b))=1 \implies f(a)\vee b=1. \]
			\item\label{c3} $f$ is $\rc$-closed if and only if for every $a\in L ^*$ and $b\in L$ 
			\[f(a\vee h(b))=1 \implies f(a)\vee b=1. \]
		\end{enumerate}
	\end{lemma}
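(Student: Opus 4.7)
The plan is straightforward since each of (1), (2), (3) has the forward direction immediate from the equivalent reformulations recorded in Section~1.5: closedness (respectively $\z$-closedness, $\rc$-closedness) of $f$ is equivalent to the equality $f(a\vee h(b))=f(a)\vee b$ for all $a$ in the relevant class and all $b\in M$, and this equality trivially implies the weaker displayed condition. So everything reduces to the reverse implication, and I will handle all three parts with a single uniform argument.

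For the reverse direction, I first observe that the inequality $f(a)\vee b\leq f(a\vee h(b))$ is automatic: $f$ is order preserving, giving $f(a)\leq f(a\vee h(b))$, while the unit of the adjunction $h\dashv f$ gives $b\leq f(h(b))\leq f(a\vee h(b))$. Hence it suffices to establish the converse inequality $f(a\vee h(b))\leq f(a)\vee b$ under the stated implication hypothesis.

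This is where subfitness enters. I would argue by contradiction: assuming $f(a\vee h(b))\not\leq f(a)\vee b$, subfitness of $M$ yields some $c\in M$ with $f(a\vee h(b))\vee c=1$ and $f(a)\vee b\vee c\neq 1$. Using $c\leq f(h(c))$ from the adjunction, monotonicity of $f$, and the fact that $h$ preserves binary joins, one obtains the chain
\[
1 \;=\; f(a\vee h(b))\vee c \;\leq\; f(a\vee h(b))\vee f(h(c)) \;\leq\; f\bigl(a\vee h(b)\vee h(c)\bigr) \;=\; f\bigl(a\vee h(b\vee c)\bigr).
\]
Now the implication hypothesis, applied to the pair $(a,b\vee c)$, gives $f(a)\vee b\vee c=1$, which contradicts the choice of $c$. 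The argument is identical in parts (1)--(3) because we never modify $a$; only $b$ gets replaced by the larger element $b\vee c\in M$, and in each case the hypothesis allows $b$ to range freely over $M$.

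I do not foresee a real obstacle: the only point requiring care is that the replacement $b\rightsquigarrow b\vee c$ stays within the admissible range of the hypothesis, which works precisely because in all three parts the restriction is placed on $a$ and not on $b$. The conceptual content is simply that subfitness of the codomain converts a single ``$1$-level'' implication into a full-blown order equality.
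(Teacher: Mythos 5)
Your proposal is correct and follows essentially the same route as the paper: contradiction via subfitness, replacing $b$ by $b\vee c$ to get $f(a\vee h(b\vee c))=1$, then applying the hypothesis to contradict the choice of $c$. The only cosmetic difference is that you derive $f(a\vee h(b\vee c))=1$ on the $M$-side via the unit $c\leq f(h(c))$ and monotonicity of $f$, whereas the paper applies $h$ and uses the counit $h(f(x))\leq x$; both computations are equivalent.
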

	\begin{proof}
		\ref{c1}: Clearly every closed map satisfies the equation in \ref{c1}. Conversely, note it only suffices to show that $f(a\vee h(b))\leq f(a)\vee b)$ since the other inequality always holds. Suppose it is not true, that is $f(a\vee h(a))\nleq f(a)\vee b$. Since $M$ is subfit there is $c\in M$ such that 
		$f(a\vee h(b))\vee c=1\neq c\vee f(a)\vee b$. 
		Hence, 
		\[1= h(f(a\vee h(b)))\vee  h(c)\leq a\vee h(b)\vee h(c)=a \vee h(b\vee c)\]
		so $1=f(a\vee h(b\vee c))$. By assumption, we get $b\vee c\vee f(a)=1$ contradicting the fact that $1\neq c\vee f(a)\vee b$. Consequently, $f$ is closed.\\	
		Clearly, every $\z$-closed (resp. $\rc$-closed) map satisfies the equation in \ref{c2} (resp. \ref{c3}). The converse follows exactly as the proof of \ref{c1}, but one takes $a\in \coz~L$ (resp. $a\in L^*$).
	\end{proof}
	Note that $\mathfrak{S}_\close\mathfrak{T}_\close$-maps  are precisely those maps where 
	\[a\vee h(b)=1\implies \exists d\in M \quad h(x)\leq a, \  x\vee b=1.\]
	Since $h(f(a))\leq a$, this is equivalent to 
	\[a\vee h(b)=1\implies  f(a)\vee b=1.\]
	From the adjunction between $f$ and $h$ we have that $1\leq f(a\vee h(b))\iff 1\leq a\vee h(b)$. Thus, a map is a $\mathfrak{S}_\close\mathfrak{T}_\close$-map if and only if 
	\[ f(a\vee h(b))=1\implies f(a)\vee b=1\]
	for every $a\in L$ and $b\in M$. Consequently, by Proposition~\ref{close}~\ref{c1}, under subfitness $\mathfrak{S}_\close\mathfrak{T}_\close$-maps and closed localic maps coincide. 
	
	Using equations \eqref{closedsub} and \eqref{preimage.close.open}, one can easily see that a localic map is an $\mathfrak{S}_\close\mathfrak{T}_\Sz$-map (resp. $\mathfrak{S}_\close\mathfrak{T}_{\rc}$-map) if and only if
	\[a\vee h(b)=1 \implies \exists d\in \coz~M \quad b\vee d \ \text{ and } \ h(d)\leq a\]
	\[(resp. \  a\vee h(b)=1 \implies \exists d\in M^* \quad b\vee d \  \text{ and } \ h(d)\leq a) \]
	for every $a\in \coz~L$ (resp. $a\in L^*$) and $b\in M$. Hence, by  Proposition~\ref{close}, the following implications hold for a localic map $f\colon L\to M$ when $M$ is subfit:
	\[ \z\text{-preserving}\implies \mathfrak{S}_\close\mathfrak{T}_\Sz\text{-map}\implies \z\text{-closed}\]
	\[ \rc\text{-preserving}\implies \mathfrak{S}_\close\mathfrak{T}_{\rc}\text{-map}\implies \rc\text{-closed}.\]
	
	\begin{remark}\label{coz.closed}
		A localic map $f\colon L\to M$ is a $\mathfrak{S}_\Sz\mathfrak{T}_\close\lambda$-map if and only if 
		\begin{equation*}
			h(a)\vee b =1\implies \exists d\in L\quad a\vee d=1 \text{ and } h(d)\leq b
		\end{equation*}
		for every $a\in \coz~M$ and $b\in L$. Equivalently, since $f\vdash h$, 
		\begin{equation*}
			f(h(a)\vee b) =1\implies a\vee f(b)=1 
		\end{equation*}
		for every $a\in \coz~M$ and $b\in L$. Assuming complete regularity (see \cite[Lemma 3.2]{closed}), these are precisely the right adjoints of the weakly closed homomorphisms of \cite{closed} (or $coz$-closed in \cite{Remote}). 
		Nevertheless, if one looks closely at the definition, these are not the same as the $\z$-closed maps. The latter generalize the notion of $Z$-maps between topological spaces \cite{Zenor}, which are the continuous maps where images of zero sets are closed. 
		
	\end{remark}
	
	
	Following the definition of $\mathfrak{S}$-coz separating, we say a locale is \emph{$\mathfrak{S}$-$\mathfrak{T}$ separating} if $S\cap \open (a)=\OS$ for some $S\in \mathfrak{S}L$, then ther is $T\in \mathfrak{T}L$ such that $S\cap T=\OS$ and $\open(a)\subseteq T$. 
	
	A localic map $f\colon L\to M$ is a \emph{$\mathfrak{T}$-open} map if $f[T]$ is open for every $T\in \mathfrak{T}L$. For the selections $\mathfrak{T}_\open$,$\mathfrak{T}_{\ro}$ and $\mathfrak{T}_\Sc$, these are the open, $\ro$-open and $\co$-open maps respectively. 
	
	\begin{theorem}\label{lambda1}
		Let $\mathfrak{S}$ and $\mathfrak{T}$ be two selections of sublocales such that $\mathfrak{T}L\subseteq \open (L)$ and $L\in \mathfrak{T}L$ for every locale $L$. Let $M$ be a locale such that every $S\in \mathfrak{S}M$ is complemented, then the following are equivalent:
		\begin{enumerate}[label=\textup{(\roman*)}]
			\item\label{oo1} $M$ is $\mathfrak{S}$-$\mathfrak{T}$ separating.
			\item\label{oo2} Every $\mathfrak{T}$-open localic map $f\colon L\to M$ is an $\mathfrak{S}\mathfrak{T}\lambda$-map.
			\item\label{oo3} Every open localic map $f\colon L\to M$ is an $\mathfrak{S}\mathfrak{T}\lambda$-map.
		\end{enumerate}
	\end{theorem}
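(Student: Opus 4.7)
The plan is to replicate the cyclic three-step proof of Theorem~\ref{coz.sep} verbatim, reading $\mathfrak{T}$ in the role there played by $\mathfrak{S}_\Sc$. This generalization goes through precisely because the two structural hypotheses on $\mathfrak{T}$ (namely $\mathfrak{T}L\subseteq \open(L)$ and $L\in \mathfrak{T}L$) supply exactly what the earlier proof was using about cozero sublocales.

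For \ref{oo1}$\implies$\ref{oo2}, suppose $M$ is $\mathfrak{S}$-$\mathfrak{T}$ separating and $f\colon L\to M$ is $\mathfrak{T}$-open. Given $S\in \mathfrak{S}M$ and $T\in \mathfrak{T}L$ with $f_{-1}[S]\cap T=\OS$, I would first use that $S$ is complemented (and that $f_{-1}[-]$ preserves complements) to rewrite this as $T\subseteq f_{-1}[S^\#]$. The image/preimage adjunction gives $f[T]\cap S=\OS$, and $\mathfrak{T}$-openness of $f$ promotes $f[T]$ to an open sublocale $\open_M(a)$ of $M$. Feeding $\open_M(a)\cap S=\OS$ into the $\mathfrak{S}$-$\mathfrak{T}$ separating hypothesis yields $T'\in \mathfrak{T}M$ with $\open_M(a)\subseteq T'$ and $S\cap T'=\OS$; hence $T\subseteq f_{-1}[f[T]]\subseteq f_{-1}[T']$, as required.

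The implication \ref{oo2}$\implies$\ref{oo3} is immediate: because $\mathfrak{T}L\subseteq \open(L)$, every open localic map automatically sends members of $\mathfrak{T}L$ to open sublocales and is therefore $\mathfrak{T}$-open. For \ref{oo3}$\implies$\ref{oo1}, given $S\in \mathfrak{S}M$ and $a\in M$ with $\open_M(a)\cap S=\OS$, I would apply the hypothesis to the open embedding $j\colon \open_M(a)\hookrightarrow M$, which is open and hence an $\mathfrak{S}\mathfrak{T}\lambda$-map. The axiom $L\in \mathfrak{T}L$ (applied to $L=\open_M(a)$) is used here, since it guarantees $\open_M(a)\in \mathfrak{T}(\open_M(a))$ and thereby makes $(S,\open_M(a))$ admissible input to the defining property of $j$. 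Since $j_{-1}[S]=\open_M(a)\cap S=\OS$, we obtain $T'\in \mathfrak{T}M$ with $S\cap T'=\OS$ and $\open_M(a)\subseteq j_{-1}[T']=\open_M(a)\cap T'$, whence $\open_M(a)\subseteq T'$ and $M$ is $\mathfrak{S}$-$\mathfrak{T}$ separating.

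No step should be substantially difficult; the main thing to keep track of is the asymmetric role of the two axioms on $\mathfrak{T}$. The inclusion $\mathfrak{T}L\subseteq \open(L)$ is what allows us to apply $\mathfrak{S}$-$\mathfrak{T}$ separation to $f[T]$ in the first step (and is what makes \ref{oo2}$\implies$\ref{oo3} trivial), while the condition $L\in \mathfrak{T}L$ is precisely what lets us pick the ambient sublocale $\open_M(a)$ itself as the witnessing $T$ in the embedding argument of the last step. The complementedness of members of $\mathfrak{S}M$ is used only to convert $f_{-1}[S]\cap T=\OS$ into $T\subseteq f_{-1}[S^\#]$ so that the image/preimage adjunction can be applied.
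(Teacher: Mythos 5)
Your proposal is correct and follows essentially the same argument as the paper: the same complementation/adjunction manipulation for (i)$\Rightarrow$(ii), the same trivial observation for (ii)$\Rightarrow$(iii), and the same open-embedding $j\colon \open_M(a)\hookrightarrow M$ with $\open_M(a)\in\mathfrak{T}\open_M(a)$ for (iii)$\Rightarrow$(i). The only nitpick is bookkeeping: complementedness of $S$ is also what turns $f[T]\subseteq S^\#$ into $f[T]\cap S=\OS$, not just the initial rewriting, but this does not affect the validity of the argument.
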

	\begin{proof}
		\ref{oo1}$\implies$\ref{oo2}:
		Assume $M$ is  $\mathfrak{S}$-$\mathfrak{T}$-separating, and let  $f\colon L\to M$ be a $\mathfrak{T}$-open localic map. Let $S\in \mathfrak{S}M$ and $T\in \mathfrak{T}L$ such that $T\cap f_{-1}[S]=\OS$. Then, since the localic preimage preserves complementes, $T\subseteq f_{-1}[S]^\#=f_{-1}[S^\#]$. By the image/preimage adjunction, this means $f[T]\subseteq S^\#$. Since $S$ is complemented this is equivalent to $f[T]\cap S =\OS$. By assumption, and since $f[T]$ is open, there is $T'\in \mathfrak{T}M$ such that $f[T]\subseteq T'$ (equiv. $T\subseteq f_{-1}[T']$) and $T'\cap S=\OS$. Thus, $f$ is an $\mathfrak{S}\mathfrak{T}\lambda$-map.\\
		The implication \ref{oo2}$\implies$\ref{oo3} is immediate, since every open map is $\mathfrak{T}$-open (we are assuming $\mathfrak{T}L\subseteq \open (L)$).\\ 
		\ref{oo3}$\implies$\ref{oo1}: Let $a\in M$ and $S\in \mathfrak{S}M$ such that $\open_M(a)\cap S=\OS$. Consider the open localic embedding $j\colon \open_M(a)\hookrightarrow M$. By assumption, $j$ is a $\mathfrak{S}\mathfrak{T}\lambda$-map. Since $\open_M(a)\cap S=\OS=\open_M(a)\cap j_{-1}[S]$ and $\open_M(a)\in \mathfrak{T}\open_M(a)$, there exists $T\in\mathfrak{T}M$ such that $\open_M(a)\subseteq j_{-1}[T]=T\cap \open_M(a)$ (equiv. $\open_M(a)\subseteq T$) and $T\cap S=\OS$. Consequently, $M$ is $\mathfrak{S}$-$\mathfrak{T}$ separating.
	\end{proof}
	
	Similarly, now using closed sublocales, we say that a locale is \emph{$\mathfrak{S}$-$\mathfrak{T}$* separating} if $S\cap \close (a)=\OS$ for some $S\in \mathfrak{S}L$, then there is $T\in \mathfrak{T}L$ such that $S\cap T=\OS$ and $\close(a)\subseteq T$. 
	A localic map $f\colon L\to M$ is a \emph{$\mathfrak{T}$-closed map} if $f[T]$ is closed for every $T\in \mathfrak{T}L$. For the selections $\mathfrak{T}_\close$,$\mathfrak{T}_{\rc}$ and $\mathfrak{T}_\Sz$, these are the closed, $\rc$-closed and $\z$-closed maps respectively. 
	\begin{theorem}\label{lambda2}
		Let $\mathfrak{S}$ and $\mathfrak{T}$ be two selections of sublocales such that $\mathfrak{T}L\subseteq \close (L)$ and $L\in \mathfrak{T}L$ for every locale $L$. Let $M$ be a locale such that every $S\in \mathfrak{S}M$ is complemented, then the following are equivalent:
		\begin{enumerate}[label=\textup{(\roman*)}]
			\item\label{cc1} $M$ is $\mathfrak{S}$-$\mathfrak{T}$* separating.
			\item\label{cc2} Every $\mathfrak{T}$-closed localic map $f\colon L\to M$ is an $\mathfrak{S}\mathfrak{T}\lambda$-map.
			\item\label{cc3} Every closed localic map $f\colon L\to M$ is an $\mathfrak{S}\mathfrak{T}\lambda$-map.
			\item\label{cc4} Every proper localic map $f\colon L$ is an $\mathfrak{S}\mathfrak{T}\lambda$-map.
		\end{enumerate}
	\end{theorem}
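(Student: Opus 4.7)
The proof should follow the cycle \ref{cc1}$\Rightarrow$\ref{cc2}$\Rightarrow$\ref{cc3}$\Rightarrow$\ref{cc4}$\Rightarrow$\ref{cc1}, mirroring closely the proof of Theorem~\ref{lambda1} but replacing open sublocales with closed ones and the ``open embedding'' step with a ``closed embedding''. The middle two implications \ref{cc2}$\Rightarrow$\ref{cc3} and \ref{cc3}$\Rightarrow$\ref{cc4} are immediate from inclusions between classes of maps: every closed map is $\mathfrak{T}$-closed (since $\mathfrak{T}L\subseteq\close(L)$ by hypothesis, so $\mathfrak{T}$-closedness is a weakening of closedness), and every proper map is, by definition, closed. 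So the only real work is in \ref{cc1}$\Rightarrow$\ref{cc2} and \ref{cc4}$\Rightarrow$\ref{cc1}.

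For \ref{cc1}$\Rightarrow$\ref{cc2}, I would take a $\mathfrak{T}$-closed localic map $f\colon L\to M$, and pick $S\in\mathfrak{S}M$ and $T\in\mathfrak{T}L$ with $T\cap f_{-1}[S]=\OS$. Since $S$ is complemented in $M$ and the preimage $f_{-1}[-]$ preserves complements, $f_{-1}[S]^\#=f_{-1}[S^\#]$, so $T\subseteq f_{-1}[S^\#]$. By the image/preimage adjunction, $f[T]\subseteq S^\#$, i.e.\ $f[T]\cap S=\OS$. Because $f$ is $\mathfrak{T}$-closed, $f[T]$ is a closed sublocale of $M$; then $\mathfrak{S}$-$\mathfrak{T}$* separation applied to $S$ and to the element $a\in M$ with $f[T]=\close_M(a)$ yields $T'\in\mathfrak{T}M$ with $\close_M(a)\subseteq T'$ (equivalently $T\subseteq f_{-1}[T']$) and $T'\cap S=\OS$, which is exactly what is needed for $f$ to be an $\mathfrak{S}\mathfrak{T}\lambda$-map.

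For \ref{cc4}$\Rightarrow$\ref{cc1}, given $a\in M$ and $S\in\mathfrak{S}M$ with $\close_M(a)\cap S=\OS$, I would consider the closed localic embedding $j\colon\close_M(a)\hookrightarrow M$. Such an embedding is proper (it is closed, and its left adjoint $j^*$ preserves all joins, in particular directed ones), so by hypothesis $j$ is an $\mathfrak{S}\mathfrak{T}\lambda$-map. The entire sublocale $\close_M(a)$ belongs to $\mathfrak{T}\close_M(a)$ by the standing assumption $L\in\mathfrak{T}L$ for every locale $L$. Since $j_{-1}[S]=\close_M(a)\cap S=\OS$, we get $\close_M(a)\cap j_{-1}[S]=\OS$, so there exists $T\in\mathfrak{T}M$ with $\close_M(a)\subseteq j_{-1}[T]=\close_M(a)\cap T$ (that is, $\close_M(a)\subseteq T$) and $T\cap S=\OS$, which is exactly $\mathfrak{S}$-$\mathfrak{T}$* separation.

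The main obstacle I anticipate is a minor one: checking that closed embeddings are indeed proper in the sense of the paper (closed and directed-join preserving). This is standard — the left adjoint to a closed embedding into a locale is the quotient map sending $x$ to $x\vee a$, which preserves all joins — but it is the only step requiring care beyond the mechanical translation of the proof of Theorem~\ref{lambda1} from open to closed sublocales.
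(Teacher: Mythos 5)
Your proposal is correct and takes essentially the same route as the paper's proof: the same cycle of implications, the same use of complement-preservation of $f_{-1}[-]$ together with the image/preimage adjunction for \ref{cc1}$\Rightarrow$\ref{cc2}, the same observation that closed maps are $\mathfrak{T}$-closed and proper maps are closed for the middle implications, and the same closed embedding $j\colon\close_M(a)\hookrightarrow M$ (using $\close_M(a)\in\mathfrak{T}\close_M(a)$) for \ref{cc4}$\Rightarrow$\ref{cc1}. The only nitpick is in your side remark on properness: the paper's definition requires the localic map itself to preserve directed joins, which holds because joins in $\close_M(a)$ are computed as in $M$, rather than following from the fact that the left adjoint $x\mapsto x\vee a$ preserves joins (which every left adjoint does); the paper simply takes the properness of closed embeddings for granted.
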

	\begin{proof}
		\ref{cc1}$\implies$\ref{cc2}:
		Assume $M$ is  $\mathfrak{S}$-$\mathfrak{T}$* separating, and let  $f\colon L\to M$ be a $\mathfrak{T}$-closed localic map. Let $S\in \mathfrak{S}M$ and $T\in \mathfrak{T}L$ such that $T\cap f_{-1}[S]=\OS$. Then, since the localic preimage preserves complementes, $T\subseteq f_{-1}[S]^\#=f_{-1}[S^\#]$. By the image/preimage adjunction, this means $f[T]\subseteq S^\#$. Since $S$ is complemented this is equivalent to $f[T]\cap S =\OS$. By assumption, and since $f[T]$ is closed, there is $T'\in \mathfrak{T}M$ such that $f[T]\subseteq T' $(equiv. $T\subseteq f_{-1}[T']$) and $T'\cap S=\OS$. Thus, $f$ is an $\mathfrak{S}\mathfrak{T}\lambda$-map.\\
		The implications \ref{cc2}$\implies$\ref{cc3} and \ref{cc3}$\implies$\ref{cc4} are immediate, since every closed map is $\mathfrak{T}$-closed (we are assuming $\mathfrak{T}L\subseteq \close (L)$), and every proper map is closed.\\ 
		\ref{cc4}$\implies$\ref{cc1}: Let $a\in M$ and $S\in \mathfrak{S}M$ such that $\close_M(a)\cap S=\OS$. Consider the proper localic embedding $j\colon \close_M(a)\hookrightarrow M$. By assumption, $j$ is a $\mathfrak{S}\mathfrak{T}\lambda$-map. Since $\close_M(a)\cap S=\OS= \close_M(a)\cap j_{-1}[S]$ and $\close_M(a)\in \mathfrak{T}\close_M(a)$, there exists $T\in\mathfrak{T}M$ such that $\close_M(a)\subseteq j_{-1}[T]=T\cap \close_M(a)$ (equiv. $\close_M(a)\subseteq T$) and $T\cap S=\OS$. Consequently, $M$ is $\mathfrak{S}$-$\mathfrak{T}$* separating.
	\end{proof}
	
	The following two tables show all the $\mathfrak{S}\mathfrak{T}$-separating and $\mathfrak{S}\mathfrak{T}$*-separating that describe our usual selections. Thus, Theorem~\ref{lambda1} and Theorem~\ref{lambda2} offer characterizations for some of the respective frames below.  
	\begin{table}[h!]\label{table.l1}
		\caption {$\mathfrak{S}\mathfrak{T}$-separating} \label{STsep} 
		\begin{center}
			\begin{tabular}{c|c|c|c|c|c|c}
				\hline
				$\mathfrak{S}\diagdown \mathfrak{T}$ &$\mathfrak{T}_\open$&$\mathfrak{T}_\close$&$\mathfrak{T}_\Sc$&$\mathfrak{T}_\Sz$ & $\mathfrak{T}_{\mathsf{reg}\open}$& $\mathfrak{T}_{\mathsf{reg}\close}$ \\
				\hline
				$\mathfrak{S}_\open$ & -- & -- & Oz & Oz & -- & -- \\
				$\mathfrak{S}_\close$&--&Boolean&Perfectly Normal& Boolean &Boolean & Boolean \\
				$\mathfrak{S}_\Sc$&--&--&Weak Oz& --& --&--\\
				$\mathfrak{S}_\Sz$&--&$P$-frame & --& $P$-frame & Almost $P$&$P$-frame\\
				$\mathfrak{S}_{\mathsf{reg}\open}$&--&-- &Oz& Oz& --&--\\
				$\mathfrak{S}_{\mathsf{reg}\close}$&--&ED&Oz&ED&--&ED\\
			\end{tabular}
		\end{center}
		\small
		\hskip -\fontdimen2{Note}:  --  denotes every frame
	\end{table}
	\begin{table}[h!]\label{table.l2}
		\caption {$\mathfrak{S}\mathfrak{T}$*-separating} \label{STsep*} 
		\begin{center}
			\begin{tabular}{c|c|c|c|c|c|c}
				\hline
				$\mathfrak{S}\diagdown \mathfrak{T}$ &$\mathfrak{T}_\open$&$\mathfrak{T}_\close$&$\mathfrak{T}_\Sc$&$\mathfrak{T}_\Sz$ & $\mathfrak{T}_{\mathsf{reg}\open}$& $\mathfrak{T}_{\mathsf{reg}\close}$ \\
				\hline
				$\mathfrak{S}_\open$&Boolean &--&Boolean &Perfectly Normal&Boolean&Boolean\\
				$\mathfrak{S}_\close$&--&--&Normal&Normal&& \begin{minipage}{3cm}
					\tiny{Disjoint closed sublocales are contianed in disjoint regular closed sublocales.}
				\end{minipage}\\
				$\mathfrak{S}_\Sc$&$P$-frame&--&$P$-frame&--&$P$-frame&Almost$P$\\
				$\mathfrak{S}_\Sz$&--&--&--&$\delta$-NS&&\\
				$\mathfrak{S}_{\mathsf{reg}\open}$&ED&--&ED&Oz&ED&--\\
				$\mathfrak{S}_{\mathsf{reg}\close}$&--&--&&&--&\\
			\end{tabular}
		\end{center}
		\small
		\hskip -\fontdimen2{Note}:  --  denotes every frame, and no obvious characterizing class of locales was found  when the cell is left blank. 
	\end{table}
	
	\section{Embeddings}
	
	This section offers some examples and characterizations as to when is a localic embedding an $\mathfrak{S}\beta$- or $\mathfrak{S}\mathfrak{T}\lambda$-map. 
	
	
	In \cite{Mdube}, the authors introduce the notions of $C_1$- and strong $C_1$-quotients. Here we present them in terms of sublocales.
	We say a sublocale is \emph{$C_1$-embedded} if for every $Z\in \Sz(S)$ and $F\in \Sz(L)$ such that $Z\cap (F\cap S)=\OS$, then $Z$ and $F$ are completely separated in $L$ (equiv. the left adjoint of $j\colon S\hookrightarrow L$ is a $C_1$-quotient).  \\
	A sublocale $S$ is \emph{strongly $C_1$-embedded} if for every $Z\in \Sz(S)$ and $a\in L$ such that $\close_L(a)\cap S\in \Sz(S)$ and $Z\cap (\close_L(a)\cap S)=\OS$, then $Z$ and $\close_L(a)$ are completely separated in $L$ (equiv. the left adjoint of $j\colon S\hookrightarrow L$ is a strong $C_1$-quotient). 
	
	It is already shown in \cite[Corollary 3.3]{Mdube} that $j\colon S\hookrightarrow L$ is an $\mathfrak{S}_\Sz\beta$-map if and only if $S$ is $C_1$-embedded. For strong $C_1$-embeddings we have the following result: 
	\begin{proposition}
		Let $S$ be a sublocale of $L$. The localic embedding $j\colon S\hookrightarrow L$ is $\mathfrak{S}_\close\beta$-map if and only if $S$ is strongly $C_1$.
	\end{proposition}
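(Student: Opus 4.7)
The plan is to use characterization~(iv) of Remark~\ref{cs.beta} specialised to the embedding $j\colon S\hookrightarrow L$ with selection $\mathfrak{S}_\close$. Since $j_{-1}[T] = S\cap T$ for every sublocale $T$ of $L$, the $\mathfrak{S}_\close\beta$-map condition becomes: for every $a\in L$ and every $Z\in\Sz(S)$ completely separated from $S\cap\close_L(a)$ in $S$, there exists $Z'\in\Sz(L)$ with $Z\subseteq Z'$ completely separated from $\close_L(a)$ in $L$. By Remark~\ref{cs.closure}(1), the existence of such a $Z'$ is in turn equivalent to $Z$ itself being completely separated from $\close_L(a)$ in $L$. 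This reformulation mirrors the shape of strong $C_1$-embeddedness; the difference is that the latter requires the stronger hypothesis that $S\cap\close_L(a)$ itself is a zero sublocale of $S$ disjoint from $Z$, rather than merely completely separated from $Z$ in $S$.

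The forward direction is then immediate. Assume $j$ is an $\mathfrak{S}_\close\beta$-map and take $Z\in\Sz(S)$ and $a\in L$ satisfying the strong $C_1$ hypothesis. Then $Z$ and $\close_L(a)\cap S$ are two disjoint zero sublocales of $S$, hence completely separated in $S$ by Remark~\ref{cs.closure}(3), and the reformulation above yields the required complete separation of $Z$ and $\close_L(a)$ in $L$.

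For the reverse direction, assume $S$ is strongly $C_1$-embedded and take $Z\in\Sz(S)$ completely separated in $S$ from $T := S\cap\close_L(a)$. Unpack this to obtain disjoint $Z_1,Z_2\in\Sz(S)$ with $Z\subseteq Z_1$ and $T\subseteq Z_2$, and write $Z_2 = \close_S(y)$ for some $y\in\coz~S\subseteq L$. Then $\close_L(y)\cap S = \close_S(y) = Z_2\in\Sz(S)$ and $Z\cap(\close_L(y)\cap S) = Z\cap Z_2 = \OS$, so strong $C_1$-embeddedness applied to $Z$ and $y\in L$ yields that $Z$ and $\close_L(y)$ are completely separated in $L$. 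The main obstacle is the final step: transferring this complete separation from $\close_L(y)$ to $\close_L(a)$ itself. Although $\close_L(a)\cap S\subseteq\close_L(y)$ by construction, $\close_L(y)$ and $\close_L(a)$ are in general incomparable as sublocales of $L$, so the argument will need to exploit the precise form of the zero-sublocale witnesses supplied by strong $C_1$ together with the normality of the cozero $\sigma$-frame of $L$ to produce a zero sublocale of $L$ containing $\close_L(a)$ that is disjoint from a zero-sublocale witness for $Z$.
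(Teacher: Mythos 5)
Your reformulation of the $\mathfrak{S}_\close\beta$ condition for the embedding and your forward direction are correct and coincide with the paper's argument. The converse, however, has a genuine gap, and it is exactly at the point you flag. Applying strong $C_1$-embeddedness to the element $y$ (with $\close_S(y)=Z_2\supseteq T=\close_L(a)\cap S$) only yields that $Z$ and $\close_L(y)$ are completely separated in $L$, and from this one cannot in general reach $\close_L(a)$: outside of $S$ the sublocale $\close_L(a)$ is not controlled by $\close_L(y)$ at all, and what you know about $\close_L(a)$ globally is only that it is disjoint from $Z$, which is far weaker than complete separation. The sketched repair via normality of $\coz L$ does not get off the ground, because normality of the cozero $\sigma$-frame upgrades a cover $u\vee v=1$ by cozero elements, and you have no such pair of cozero/zero witnesses in $L$ separating $Z$ from $\close_L(a)$ to begin with; producing them is precisely what remains to be proved.

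The missing idea (which is how the paper concludes) is to apply strong $C_1$-embeddedness not to $\close_L(y)$ but to a closed sublocale that \emph{contains} $\close_L(a)$ and still traces to a zero sublocale of $S$: take any closed $D'$ of $L$ with $D'\cap S=Z_2$ (your $\close_L(y)$ serves) and consider $\close_L(a)\vee D'=\close_L(a\wedge y)$. By distributivity, $(\close_L(a)\vee D')\cap S=(\close_L(a)\cap S)\vee Z_2=Z_2\in\Sz(S)$ since $T\subseteq Z_2$, and $Z\cap Z_2=\OS$, so the strong $C_1$ hypothesis applies to $Z$ and the element $a\wedge y$, giving that $Z$ and $\close_L(a)\vee D'$ are completely separated in $L$. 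Since $\close_L(a)\subseteq\close_L(a)\vee D'$, Remark~\ref{cs.closure}(1) then gives the complete separation of $Z$ and $\close_L(a)$ in $L$, which by your own reformulation is the $\mathfrak{S}_\close\beta$ condition. So your argument becomes correct after a single substitution: invoke strong $C_1$ at $a\wedge y$ rather than at $y$.
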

	\begin{proof}
		Let $j$ be an $\mathfrak{S}_\close\beta$-map and let $a\in L$ and $Z\in \Sz(S)$ such that $\close_L(a)\cap S\in \Sz(S)$ and $Z\cap (\close_L(a)\cap S)=\OS$. In particular, $Z$ and $j_{-1}[\close_L(a)]=\close_L(a)\cap S$ are completely separated in $S$ (they are disjoint zero sublocales of $S$). By assumption there is $D\in \Sz(L)$ completely separated from $\close_L(a)$ and $Z\subseteq j_{-1}[D]= D\cap S$. In particular, $Z$ and $\close_L(a)$ are completely separated in $L$. \\
		Conversely, assume $S$ is strongly $C_1$-embedded in $L$. Let $Z\in \Sz(S)$ and $a\in L$ such that $Z$ and $\close_L(a)\cap S$ are completely separated in $S$. Then there is $D\in \Sz(S)$ such that $Z\cap D=\OS$ and $\close_L(a)\cap S\subseteq D$. Since $D$ is a closed sublocale of $S$, there is a closed sublocale $D'$ of $L$ such that $D=D'\cap S$. Then 
		\[(\close_L(a)\vee D')\cap S=(\close_L(a)\cap S)\vee (D'\cap S)=D.\]
		Using the fact tha $S$ is strongly $C_1$-embedded we can conclude $\close_L(a)\vee D'$ and $Z$ are completely separated in $L$. This means there is $Z'\in \Sz(L)$ such that $Z\subseteq Z'$ (in particular, $Z\subseteq Z'\cap S$), and $Z'$ is completely separated from $\close_L(a)\vee D'$; thus, it is also completely separated from $\close_L(a)$.
	\end{proof}

	Recall that a sublocale $S$ of $L$ is \emph{$C^*$-embedded} if every bounded continuous real-valued function on the locale $S$ can be extended to $L$ (see for instance \cite{BW,tesisA,parallel}). For our purposes it is more useful to use one of its well-known characterizations: $S$ is $C^*$-embedded iff whenever two sublocales of $S$ are completely separated in $S$ then they are completely separated in $L$ \cite[Theorem 6.1]{extension}. 
	
	\begin{proposition}\label{strongc1}
		Let $S$ be a sublocale of $L$ and $j\colon S\hookrightarrow L$ its localic embedding, $j$ is an $\mathfrak{S}_\open\beta$-map if and only if  $S$ is $C^*$-embedded and $\overline{S}$ is clopen.
	\end{proposition}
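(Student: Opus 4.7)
The plan is to leverage Theorem~\ref{ass}, which tells us that $\mathfrak{S}_\open\beta$-maps are precisely the $\cb$-preserving localic maps, together with Proposition~\ref{cs.cb}, which rephrases $a \cb b$ as complete separation of $\open(a)$ and $\close(b)$. Since $C^*$-embedding is itself phrased in terms of complete separation, this provides the common language to bridge the two sides of the equivalence.

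For the forward direction, assume $j$ is $\mathfrak{S}_\open\beta$, hence $\cb$-preserving. The clopen claim drops out quickly: the constant family $\{0_S\}_{q \in [0,1]\cap\QQ}$ witnesses $0_S \cb_S 0_S$, because trivially $0_S^{*_S} \vee 0_S = 1_S$, i.e.\ $0_S \prec_S 0_S$; $\cb$-preservation then gives $0_S \cb_L 0_S$, whence $0_S \prec_L 0_S$, so $0_S$ is complemented in $L$ and $\overline{S} = \close_L(0_S)$ is clopen. For $C^*$-embeddedness, let $A, B \subseteq S$ be completely separated in $S$. Iterating Remark~\ref{cs.normal} and appealing to Proposition~\ref{cs.cb}, I produce $u, v \in \coz~S$ with $u \cb_S v$, $A \subseteq \open_S(u) \subseteq \open_L(u)$, and $B \subseteq \close_S(v) \subseteq \close_L(v)$. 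By $\cb$-preservation $u \cb_L v$, so Proposition~\ref{cs.cb} makes $\open_L(u)$ and $\close_L(v)$ completely separated in $L$, and Remark~\ref{cs.closure}(1) transfers the separation to $A$ and $B$.

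For the backward direction, suppose $S$ is $C^*$-embedded and $\overline{S}$ is clopen. By Theorem~\ref{ass} it suffices to establish $\cb$-preservation. Given $a, b \in S$ with $a \cb_S b$, Proposition~\ref{cs.cb} yields complete separation of $\open_S(a)$ and $\close_S(b)$ in $S$, and $C^*$-embeddedness transports this to complete separation in $L$. Passing to closures in $L$ (Remark~\ref{cs.closure}(2)) gives $\overline{\open_S(a)}$ completely separated from $\overline{\close_S(b)} = \close_L(b)$. A short Heyting calculation (exploiting that $0_S$ is complemented) produces $a \to_L 0_S = a^{*_L} \vee 0_S$, hence $\overline{\open_S(a)} = \close_L(a^{*_L} \vee 0_S) = \close_L(a^{*_L}) \cap \overline{S}$. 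Since $\overline{S}$ and $\open_L(0_S)$ are complementary sublocales of $L$, lattice distributivity in $\Ss(L)$ gives
\[
\close_L(a^{*_L}) = \bigl(\close_L(a^{*_L}) \cap \overline{S}\bigr) \vee \bigl(\close_L(a^{*_L}) \cap \open_L(0_S)\bigr).
\]
The first summand is completely separated from $\close_L(b)$ by the previous sentence; the second lies inside the clopen $\open_L(0_S)$, disjoint from the clopen $\overline{S} \supseteq \close_L(b)$, and disjoint clopens are completely separated (Remark~\ref{cs.closure}(3)), so Remark~\ref{cs.closure}(1) makes the second summand completely separated from $\close_L(b)$ as well. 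A finite join of sublocales each completely separated from a fixed one is again completely separated from it (join the witnessing zeros via \eqref{closedsub}, intersect the disjoint witnessing zeros, both operations preserving $\Sz(L)$). Therefore $\close_L(a^{*_L}) = \overline{\open_L(a)}$ is completely separated from $\close_L(b)$, and Remark~\ref{cs.closure}(2) together with Proposition~\ref{cs.cb} conclude $a \cb_L b$.

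The main obstacle is the backward-direction lift from $\overline{\open_S(a)}$ to $\overline{\open_L(a)} = \close_L(a^{*_L})$, which is precisely where the clopen hypothesis on $\overline{S}$ is indispensable: it delivers both the Heyting identity $a \to_L 0_S = a^{*_L} \vee 0_S$ (the proof is a two-line argument using $1_L = 0_S \vee 0_S^*$) and the clopen decomposition of $\close_L(a^{*_L})$ into one piece in $\overline{S}$ (covered by $C^*$-embedding) and one piece in the disjoint clopen $\open_L(0_S)$ (covered for free). Without complementation of $0_S$, neither ingredient survives; the forward direction, by contrast, is essentially automatic once one spots that $0_S \cb_S 0_S$ is available at no cost.
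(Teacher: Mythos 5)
Your proposal is correct, and while your forward direction coincides with the paper's, your backward direction takes a genuinely different route. Forward: clopen-ness of $\overline{S}$ via $\cb$-preservation applied to $0_S\cb_S 0_S$ is exactly the paper's Remark~\ref{clop.ass}, and your transfer of complete separation through a $\cb_S$-witness is the paper's $C^*$-embedding argument. Backward: the paper does not pass through Theorem~\ref{ass} at all; it verifies the zero-sublocale form of the defining condition (condition (iv) in Remark~\ref{cs.beta}) directly, starting from $D\in\Sz(S)$ completely separated in $S$ from $j_{-1}[\open_L(a)]$, moving the separation into $L$ by $C^*$-embedding, and then using a supplement computation ($C\cap\open_L(a)\subseteq\overline{S}^{\#}$, hence $\open_L(a)\subseteq\overline{S}^{\#}\vee C^{\#}$) to manufacture the explicit witnessing zero sublocale $Z_1\cap\overline{S}$. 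You instead prove $\cb$-preservation outright: from $a\cb_S b$ you transport the separation to $L$, identify $\overline{\open_S(a)}=\close_L(a\to_L 0_S)=\close_L(a^{*})\cap\overline{S}$ via the Heyting identity $a\to_L 0_S=a^{*}\vee 0_S$ (valid because $0_S$ is complemented), and recover $\close_L(a^{*})=\overline{\open_L(a)}$ from the clopen decomposition, using that a finite join of sublocales each completely separated from a fixed one remains so. All of these steps check out (in particular $\tbigwedge\open_S(a)=a\to_L 0_S$ since $a\to_L(-)$ preserves meets, and the join lemma via \eqref{closedsub}), so both arguments are sound; yours has the structural advantage of routing both implications through the single equivalence ``$\mathfrak{S}_\open\beta$-map $=$ $\cb$-preserving,'' with the clopen hypothesis entering through one clean algebraic identity, whereas the paper's argument stays closer to the original definition and yields a concrete zero-sublocale witness.
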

	\begin{proof}
		Suppose $j$ is an $\mathfrak{S}_\open\beta$-map. Immediately, by Remark~\ref{clop.ass}, we obtain that $\overline{S}=\overline{j[S]}$ is clopen. To show $S$ is $C^*$-embedded consider $\close_S(a),\close_S(b)\in\Sz(S)$ such that $\close_S(a)\cap \close_S(b)=\OS$, we will prove they are completely separated in $L$. From Remark~\ref{cs.normal}, there is $\open_S(x)$ such that $\close_S(a)\subseteq \open_S(x)$ and $\open_S(x)$ is completely separated from $\close_S(b)$ in $S$; that is $x\cb_S b$\footnote{To avoid confusion, in this proof we use the subscript on the completely below relation to denote the corresponding locale.}. Since $j$ is $\cb$-preserving (Theorem~\ref{ass}), we get $x\cb_L b$. Meaning $\open_L(x)$ and $\close_L(b)$ are completely separated in $L$. Clearly, $\close_S(b)\subseteq \close_L(b)$ and $\close_S(a)\subseteq \open_S(x)\subseteq \open_L(x)$. Thus, $\close_S(a)$ and $\close_S(b)$ are completely separated in $L$. \\
		Conversely, suppose $S$ is $C^*$-embedded and $\overline{S}$ is clopen. Let $a\in L$ and $D\in \Sz(S)$ such that $D$ is completely separated from $\open_L(a)\cap S=j_{-1}[\open_L(a)]$ in $S$. Since $S$ is $C^*$-embedded, $D$ and $\open_L(a)\cap S$ are completely separated in $L$. Thus, there are $Z_1,Z_2\in \Sz(L)$ such that 
		\[D\subseteq Z_1, \quad \open_L(a)\cap S\subseteq Z_2\quad \text{ and}\quad Z_1\cap Z_2=\OS.\]
		From Remark~\ref{cs.normal}, there is a $C\in \Sc(L)$ such that $Z_1\subseteq C$ and $C$ is completely separated from $Z_2$ in $L$. 
		In particular, we have that $C\cap \open_L(a)\cap S\subseteq C\cap Z_2=\OS$. Hence, $S \subseteq (C\cap \open_L(a))^\#$. Since $(C\cap \open_L(a))^\#$ is a closed sublocale, we observe that 
		$\overline{S} \subseteq (C\cap \open_L(a))^\#$; consequently, $ C\cap \open_L(a)\subseteq \overline{S}^\#$.
		Thus, $\overline{S}\cap C\cap \open_L(a)\subseteq \overline{S}\cap \overline{S}^\#=\OS$, which implies that 
		\[\open_L(a)\subseteq (\overline{S}\cap C)^\#=\overline{S}^\#\vee C^\#.\]
		Note that $\overline{S}^\#\vee C^\#$ and $Z_1\cap\overline{S}$ are zero sublocales (since $\overline{S}$ is clopen); furthermore, they are disjoint:
		\begin{align*}
			(\overline{S}^\#\vee C^\#) \cap (Z_1\cap\overline{S}) &= (\overline{S}^\# \cap Z_1\cap\overline{S})\vee (C^\#\cap Z_1\cap\overline{S})\\
			&\subseteq (\overline{S}^\# \cap\overline{S})\vee (C^\#\cap Z_1)=\OS.
		\end{align*}
		We have then shown that $\open_L(a)$ is completely separated from $Z_1\cap \overline {S}$, and, since $D\subseteq S$ and $D\subseteq Z_1$, we have that  $D\subseteq S\cap Z_1\cap \overline{S}= j_{-1}[\overline{S}\cap Z_1]$, as required. 
	\end{proof}
	
	A sublocale $S$ is \emph{$z$-embedded} if for every $Z\in \Sz(S)$ there is $Z' \in \Sz(L)$ such that $Z=Z'\cap S$ (\cite{extension,GPP-Notes}). Equivalently, if for every $C\in \Sc(S)$ there is $C' \in \Sc(L)$ such that $C=C'\cap S$.
	
	\begin{proposition}\label{zemb}
		Let $S$ be a sublocale of $L$ and $j\colon S\hookrightarrow L$ its localic embedding, then the following are equivalent:
		\begin{enumerate}[label=\textup{(\roman*)}]
			\item\label{z1}$j$ is a $\mathfrak{S}_\open\mathfrak{T}_\Sc\lambda$-map.
			\item\label{z2} $S$ is $z$-embedded and whenever $S\subseteq \close(a)$ for some $a\in L$ there is $D\in \Sc(L)$ such that $S\subseteq D\subseteq \close_L(a)$.
		\end{enumerate}
	\end{proposition}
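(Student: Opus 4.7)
The plan is to prove the two implications separately: $(ii)\Rightarrow(i)$ by splicing a $z$-embedded witness with the cozero-refining property, and $(i)\Rightarrow(ii)$ by applying (i) to each piece of a $\sigma$-decomposition of a cozero element in order to secure $z$-embeddedness.

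For $(ii)\Rightarrow(i)$, take $C\in\Sc(S)$ and $a\in L$ with $C\subseteq\close_L(a)$. By $z$-embeddedness, fix $E=\open_L(e)\in\Sc(L)$ with $E\cap S=C$. From $S\cap E\cap\open_L(a)=\OS$ and the fact that $E$ and $\open_L(a)$ are both complemented, De~Morgan for supplements yields
\[S\subseteq(E\cap\open_L(a))^{\#}=\close_L(e)\vee\close_L(a)=\close_L(a\wedge e).\]
The cozero-refining property then supplies $D=\open_L(d)\in\Sc(L)$ with $S\subseteq D\subseteq\close_L(a\wedge e)$. Setting $C':=E\cap D=\open_L(e\wedge d)\in\Sc(L)$, one checks $C\subseteq C'$ (since $C\subseteq E$ and $C\subseteq S\subseteq D$) and $C'\subseteq\close_L(a\wedge e)\cap\open_L(e)\subseteq\close_L(a)$ by distributing finite meets over finite joins in the coframe $\Ss(L)$.

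For $(i)\Rightarrow(ii)$, the cozero-refining property is immediate: $S=\open_S(1)$ is itself an element of $\Sc(S)$, so applying (i) with $C:=S$ to any $a\in L$ with $S\subseteq\close_L(a)$ produces the desired $D$. The core of the argument is $z$-embeddedness. Given $C=\open_S(b)\in\Sc(S)$ with $b\in\coz~S$, decompose $b=\bigvee_{n=1}^{\infty}b_n$ with $b_n\pprec_S b$. In particular $b_n\prec_S b$, i.e.\ $b_n^{*_S}\vee b=1_S$, which is equivalent to $\close_S(b_n^{*_S})\subseteq\open_S(b)=C$. For each $n$, apply (i) to $\open_S(b_n)\in\Sc(S)$ with the element $a:=b_n^{*_S}=\bigwedge\open_S(b_n)$, obtaining $C'_n=\open_L(c_n)\in\Sc(L)$ with $\open_S(b_n)\subseteq C'_n\subseteq\close_L(b_n^{*_S})$. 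Intersecting with $S$ and using $b_n^{*_S}\in S$ gives
\[\open_S(b_n)\subseteq C'_n\cap S\subseteq\close_L(b_n^{*_S})\cap S=\close_S(b_n^{*_S})\subseteq C.\]

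Finally set $C^*:=\bigvee_n C'_n=\open_L(\bigvee_n c_n)$, which lies in $\Sc(L)$ because $\coz~L$ is a $\sigma$-frame. Since $j_S^*$ preserves countable joins and $\open_L(c)\cap S=\open_S(j_S^*(c))$ for every $c\in L$, we have
\[C^*\cap S=\open_S(j_S^*(\tbigvee_n c_n))=\tbigvee_n\open_S(j_S^*(c_n))=\tbigvee_n(C'_n\cap S),\]
which sits inside $C$ (each summand does) and contains $\bigvee_n\open_S(b_n)=\open_S(b)=C$. Hence $C^*\cap S=C$, establishing $z$-embeddedness. The main obstacle is exactly this last step: a single application of (i) to $C$ with $a=\bigwedge C$ only delivers $C'\cap S\subseteq\overline{C}^{\,S}$, which in general strictly contains $C$. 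The $\sigma$-decomposition of $b$ together with the rather-below relation $b_n\prec_S b$ is what traps each $C'_n\cap S$ inside $\close_S(b_n^{*_S})\subseteq C$, and the $\sigma$-frame structure of $\coz~L$ then reassembles the pieces into a single element of $\Sc(L)$ whose trace on $S$ recomposes exactly to $C$.
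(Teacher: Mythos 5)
Your proof of (ii)$\Rightarrow$(i) is correct and is essentially the paper's argument: extend $C$ to a cozero sublocale $E$ of $L$ with $E\cap S=C$, observe that disjointness forces $S\subseteq\close_L(a\wedge e)$, refine by a cozero $D\supseteq S$ inside this closed sublocale, and take $E\cap D$. The second half of (i)$\Rightarrow$(ii), applying (i) to $S=\open_S(1)\in\Sc(S)$, is also exactly the paper's step. For the $z$-embedding half you take a genuinely different route: the paper quotes the characterization of $z$-embedded sublocales by separation of disjoint cozero sublocales of $S$ by disjoint cozero sublocales of $L$ and applies (i) twice, whereas you build a single cozero sublocale of $L$ whose trace is $C$ via a $\sigma$-decomposition; that is a legitimate, more self-contained alternative.

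However, as written your $z$-embedding argument has a gap. To apply (i) to $\open_S(b_n)$ you need $\open_S(b_n)\in\Sc(S)$, i.e.\ $b_n\in\coz S$, but the definition of a cozero element only supplies a decomposition $b=\tbigvee_n b_n$ with $b_n\cb b$ for \emph{arbitrary} elements $b_n$; since the selection $\mathfrak{T}_\Sc$ quantifies only over cozero sublocales of the domain, property (i) says nothing about $\open_S(b_n)$ when $b_n$ is not a cozero element, and the step fails. The gap is repairable: interpolate $b_n\cb d_n\cb b$ in $S$ and use Proposition~\ref{cs.cb} (the witnesses of complete separation are zero sublocales, cf.\ Remark~\ref{cs.normal}) to produce $z_1,z_2\in\coz S$ with $b_n\wedge z_1=0$, $z_2\le d_n$ and $z_1\vee z_2=1$; then $b_n\le z_2\le d_n$, so $c_n:=z_2$ is a cozero element of $S$ with $c_n\cb b$ and $b=\tbigvee_n c_n$, and your argument runs verbatim with the $c_n$ in place of the $b_n$. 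This refinement (every cozero element is a countable join of cozero elements completely below it) is standard, going back to Banaschewski and Gilmour, but it must be invoked or proved; alternatively one can follow the paper and use the disjoint-cozero-separation characterization of $z$-embeddings, which avoids the decomposition altogether.
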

	\begin{proof}
		\ref{z1}$\implies$\ref{z2}:	We will show $S$ is $z$-embedded using the characterization in \cite[Corollary 2.5.3]{maps}. That is, given disjoint $C_1,C_2\in \Sc(S)$, we will prove there are disjoint $D_1,D_2\in \Sc(L)$ such that $C_i\subseteq D_i\cap S$ for $i=1,2$. Since $C_2$ is a cozero (in particular an open) sublocale of $S$, there is $a\in L$ such that $C_2=\open_L(a)\cap S=j_{-1}[\open_L(a)]$. Hence, $C_1\cap j_{-1}[\open_L(a)]=\OS$. By assumption there is $D_1\in \Sc(L)$ such that $C_1\subseteq j_{-1}[D_1]=D_1\cap S$ and $D_1\cap \open(a)=\OS$. In particular, we have that 
		\[j_{-1}[D_1]\cap j_{-1}[\open_L(a)]=j_{-1}[D_1]\cap C_2=\OS.\]
		Using again the fact that $j$ is a $\mathfrak{S}_\open\mathfrak{T}_\Sc\lambda$-map, there is $D_2\in\Sc(L)$ such that $C_2\subseteq j_{-1}[D_2]=D_2\cap S$ and $D_1\cap D_2=\OS$. 
		Moreover, if $S\subseteq \close_L(a)$ for some $a\in L$, then $S\cap \open_L(a)=S\cap j_{-1}[S]=\OS$. By assumption, since $S$ is  a cozero sublocale of $S$, there is $C\in \Sc(L)$ such that $S\subseteq C$ and $C\cap\open_L(a)=\OS$. Thus, $S\subseteq C\subseteq \close_L(a)$. \\
		\ref{z2}$\implies$\ref{z1}: Let $a\in S$ and $C\in \Sc(L)$ such that $\open_S(a)\cap j_{-1}[C]=\open_L(a)\cap S\cap C=\OS$. Since $S$ is $z$-embedded there is $C'\in \Sc(L)$ such that $C'\cap S=C$. Thus, $\open_L(a)\cap S\cap C'=\open_L(a)\cap S\cap C=\OS$; equivalently, $S$ is contained in the closed sublocale $C'^\# \vee \close_L(a)$. By assumption there is $D\in \Sc(L)$ such that $S\subseteq D\subseteq C'^\#\vee \close_L(a)$. Hence, 
		\[D\cap C'\cap \open_L(a)=\OS\quad\text{and}\quad C=S\cap C'=D\cap C'\cap S=j_{-1}[D\cap C']\]
		and $D\cap C'$ is a cozero sublocale of $L$. 
	\end{proof}

	Let us consider a very special sublocale. Given a locale $L$ the smallest dense sublocale (or \emph{Booleanization}) of $L$ is 
		\[\mathfrak{B}L=\left\{x\in L\mid x=x^{**}\right\}=\left\{x^*\mid x\in L\right\}.\] 
	Let $j\colon \mathfrak{B}L \hookrightarrow L$ be the localic embedding from the booleanization of $L$ to $L$. In \cite[Proposition 4.4]{Socle} it is proved that $j$ is a $\mathfrak{S}_\Sz\beta$-map if and only if $L$ is a $P$-frame. Furthermore, Proposition~\ref{strongc1} together with \cite[Corollary 3.18]{Mdube} show that $j$ is a $\mathfrak{S}_\close\beta$-map if and only if $L$ is a boolean. 
	\begin{proposition}\label{boole}
		Let $L$ be a locale and $j\colon \mathfrak{B}L\hookrightarrow L$. 
		\begin{enumerate}[label=\textup{(\arabic*)}]
			\item\label{B1} $j$ is a $\mathfrak{S}_\open\beta$-map if and only if $L$ is extremally disconnected.
			\item\label{B2} $j$ is a $\mathfrak{S}_\Sc\beta$-map if and only if $L$ is basically disconnected.
		\end{enumerate}
	\end{proposition}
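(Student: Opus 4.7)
The plan is to translate both parts into statements about the completely below relation via Theorem~\ref{theorem.b}\ref{3}, relying on two structural facts about the Booleanization embedding $j\colon\mathfrak{B}L\hookrightarrow L$: its left adjoint frame homomorphism is $h(a)=a^{**}$, and, since $\mathfrak{B}L$ is Boolean, the completely below relation $\cb_{\mathfrak{B}L}$ coincides with $\leq$.

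For~\ref{B1} I will invoke Theorem~\ref{ass} as a shortcut: $j$ is an $\mathfrak{S}_\open\beta$-map iff $j$ is $\cb$-preserving. Since $j$ is an inclusion and $\cb_{\mathfrak{B}L}=\leq$, this amounts to demanding that $a\cb_L b$ hold in $L$ whenever $a\leq b$ are regular elements of $L$. Specializing to $a=b=x^{**}$ for an arbitrary $x\in L$ forces $x^{**}\prec_L x^{**}$, i.e.\ $x^{*}\vee x^{**}=1$, which is exactly extremal disconnectedness. Conversely, if $L$ is extremally disconnected then every regular element $a$ is complemented, so $a\prec_L a$; the constant interpolating chain $a_q=a$ for $q<1$ together with $a_1=b$ then witnesses $a\cb_L b$ for any regular $b\geq a$.

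For~\ref{B2} I apply Theorem~\ref{theorem.b}\ref{3} to cozero sublocales $S=\open_L(x)$, $x\in\coz\,L$. A routine computation gives $\tbigwedge j_{-1}[\open_L(x)]=x^{*}$ (since $x^{*}\in\mathfrak{B}L\cap\open_L(x)$ is the minimum element of $\open_L(x)$). Using $\cb_{\mathfrak{B}L}=\leq$, the characterization becomes: for every $a\in\mathfrak{B}L$ with $a\leq x^{*}$ there exists $y\in L$ with $a\leq y^{**}$ and $y\cb_L x^{*}$. Specializing to $a=x^{*}$ yields $y\in L$ with $x^{*}\leq y^{**}$ and $y\cb_L x^{*}$; the latter implies $y^{*}\vee x^{*}=1$, hence $y^{**}\wedge x^{**}=0$ and $y^{**}\leq x^{*}$, so $y^{**}=x^{*}$. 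Then $y\prec_L y^{**}$ forces $y^{*}\vee y^{**}=1$, which via $y^{**}=x^{*}$ and $y^{*}=x^{**}$ reads $x^{*}\vee x^{**}=1$, i.e.\ basic disconnectedness. Conversely, if $L$ is basically disconnected and $x\in\coz\,L$ then $x^{*}$ is complemented in $L$, so $x^{*}\cb_L x^{*}$, and setting $y=x^{*}$ verifies the characterization for every $a\in\mathfrak{B}L$ with $a\leq x^{*}$. No substantive obstacle is expected; the only technical point meriting care is the agreement of meets and pseudocomplements in $\mathfrak{B}L$ with those in $L$ on regular elements, which is standard.
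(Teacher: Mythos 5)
Your argument is correct, but it reaches the result by a different route than the paper. For \ref{B1} you invoke Theorem~\ref{ass} to replace ``$\mathfrak{S}_\open\beta$-map'' by ``$\cb$-preserving'' and then exploit that $\cb_{\mathfrak{B}L}$ is just $\leq$ (every element of the Boolean frame $\mathfrak{B}L$ being complemented), so the whole statement collapses to the lattice identity $x^*\vee x^{**}=1$; for \ref{B2} you do the analogous reduction through the element-wise condition \ref{3} of Theorem~\ref{theorem.b}, computing $\tbigwedge j_{-1}[\open_L(x)]=x^*$ and squeezing $y^{**}=x^*$ out of $y\cb_L x^*$ and $x^*\leq y^{**}$. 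All the individual steps check out, including the converse directions via the constant interpolating chain witnessing $a\cb_L b$ for complemented $a\leq b$, and the tacit identifications of meets and pseudocomplements of regular elements inside $\mathfrak{B}L$ are indeed harmless since $0_L\in\mathfrak{B}L$ and the Heyting operation of a sublocale is the restriction of that of $L$. The paper instead argues directly at the level of sublocales and complete separation: in the forward direction it pulls $\close_L(b^{**})\cap\open_L(b)=\OS$ back to disjoint clopen (hence completely separated) sublocales of $\mathfrak{B}L$ and extracts $a\vee b^*=1$ together with $a^{**}\leq b^{**}$, and in the converse it exhibits $\close_L(b^{**})$ explicitly as the separating closed sublocale, using that disjoint clopens are zero sublocales. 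Your version buys brevity and a purely equational verification (and for \ref{B1} it is essentially the observation that the embedding is $\cb$-preserving iff regular $a\leq b$ implies $a\cb_L b$, a shortcut the paper only uses elsewhere, e.g.\ in Proposition~\ref{strongc1}); the paper's version stays in the sublocale/complete-separation language of the surrounding results and produces the separating zero sublocales explicitly, which is what feeds into the subsequent corollary on $C^*$-embeddedness of $\mathfrak{B}L$.
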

	\begin{proof}
		Let us denote by $h\colon L\to \mathfrak{B}L$ the left adjoint of the localic embedding $j$ given by $h(a)=a^{**}$.\\
		\ref{B1}: Assume $j\colon \mathfrak{B}L\hookrightarrow L$ is an $\mathfrak{S}_\open\beta$-map, let us show $L$ is extremally disconnected. Let $b\in L$, then $\close_L(b^{**})\cap \open_L(b)=\OS$ (because $b\leq b^{**}$). Taking preimages we get 
		\[j_{-1}[\close_L(b^{**})]\cap j_{-1}[\open_L(b)]=\OS\]
		and, since $\mathfrak{B}L$ is boolean, both sublocales above are clopen. Because they are disjoint, they are also completely separated. By assumption, there is $a\in L$ such that
		\begin{equation}\label{star}
			j_{-1}[\close_L(b^{**})]\subseteq j_{-1}[\close_L(a)]
		\end{equation} 
		and $\close_L(a)$ is completely separated from $\open_L(b)$. In fact, by Remark~\ref{cs.closure}, $\close_L(a)$ and $\overline{\open_L(b)}=\close(b^*)$ are completely separated. In particular, they are disjoint closed sublocales, so $a\vee b^*=1$.
		Furthermore, from \eqref{star}, we have that \[\close_{\mathfrak{B}L}(b^{**})=\close_{\mathfrak{B}L}(h(b^{**}))\subseteq \close_{\mathfrak{B}L}(h(a))=\close_{\mathfrak{B}L}(a^{**});\]
		thus, $a^{**}\leq b^{**}$. Since $a\vee b^*=1$, we get 
		$1=a\vee b^*\leq a^{**}\vee b^*\leq b^{**}\vee b^*$. Hence, $b^*$ is complemented.\\
		Conversely, suppose $L$ is extremally disconnected, and let $a^*\in \mathfrak{B}L$ and $b\in L$ such that $\close_{\mathfrak{B}L}(a^*)$ and $j_{-1}[\open_L(b)]$ are completely separated. In particular, $\close_{\mathfrak{B}L}(a^*)\subseteq j_{-1}[\close_L(b)]$. Note that 
		\[j_{-1}[\close_L(b^{**})]=\close_{\mathfrak{B}L}(h(b^{**}))=\close_{\mathfrak{B}L}(b^{**})=\close_{\mathfrak{B}L}(h(b))=j_{-1}[\close_L(b)],\]
		then $\close_{\mathfrak{B}L}(a^*)\subseteq j_{-1}[\close_L(b^{**})]$. Moreover, $\close_L(b^{**})$ and $\open_L(b^{**})$ are disjoint clopen (since $L$ is extremally disconnected) and hence disjoint zero sublcoales; thus, they are completely separated in $L$. Further, since $\open_L(b)\subseteq \open_L(b^{**})$, $\close_L(b^{**})$ and $\open_L(b)$ are completely separated in $L$, as required.\\
		\ref{B2}: The proof follows analogously like in \ref{B1} but we take $b\in \coz L$ and since the $L$ is basically disconnected the argument follows since $b^{**}$ is complemented.
	\end{proof}
	Putting together Proposition~\ref{strongc1} and Proposition~\ref{boole} we get:
	\begin{corollary}
		$L$ is extremally disconnected if and only if $\mathfrak{B}L$ is $C^*$-embedded.
	\end{corollary}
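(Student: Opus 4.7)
The plan is simply to combine Proposition~\ref{strongc1} with Proposition~\ref{boole}\ref{B1}, observing that the density of the Booleanization trivializes the clopenness condition. Concretely, I would first recall that by definition $\mathfrak{B}L$ is the smallest dense sublocale of $L$, so $\overline{\mathfrak{B}L}=L$, which is trivially clopen in $\Ss(L)$ (as it is the top element, equal to both $\close_L(0)$ and $\open_L(1)$).

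Now chain the two equivalences. Apply Proposition~\ref{boole}\ref{B1} to the embedding $j\colon \mathfrak{B}L\hookrightarrow L$: $L$ is extremally disconnected if and only if $j$ is an $\mathfrak{S}_\open\beta$-map. Then apply Proposition~\ref{strongc1} to this same embedding: $j$ is an $\mathfrak{S}_\open\beta$-map if and only if $\mathfrak{B}L$ is $C^*$-embedded and $\overline{\mathfrak{B}L}$ is clopen. Since the clopenness of $\overline{\mathfrak{B}L}=L$ holds automatically, this second equivalence reduces to ``$j$ is an $\mathfrak{S}_\open\beta$-map iff $\mathfrak{B}L$ is $C^*$-embedded''. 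Concatenating the two biconditionals yields the statement.

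There really is no obstacle here: the only thing worth double-checking is that both cited propositions genuinely apply to the Booleanization embedding (they do, since $\mathfrak{B}L$ is a sublocale of $L$ with $j$ its localic embedding, which is the hypothesis of both), and that no completely regular or similar standing assumption on $L$ is secretly needed. Given how short the argument is, I would present it as a one-paragraph proof that simply writes out the chain of equivalences, emphasizing the trivial observation $\overline{\mathfrak{B}L}=L$ to justify discarding the clopenness clause in Proposition~\ref{strongc1}.
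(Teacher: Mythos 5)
Your argument is exactly the paper's: the corollary is stated as a direct consequence of Proposition~\ref{strongc1} and Proposition~\ref{boole}, with the clopenness clause discarded because $\mathfrak{B}L$ is dense, so $\overline{\mathfrak{B}L}=L$ is trivially clopen. The proposal is correct and matches the paper's route.
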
   
	
	A further example: since $\mathfrak{B}L$ is dense, by Proposition~\ref{zemb}, $j$ is a $\mathfrak{S}_\open\mathfrak{T}_\Sc\lambda$-map if and only if $L$ is coole (that is, a frame whose booleanization is $z$-embedded \cite{coole}). 
	
	\section*{Acknowledgements}
	The author gratefully acknowledges the financial support from the UWC Research Chair in Mathematics and Applied Mathematics – Topology for Tomorrow. The author also acknowledges the support of a scholarship from the Oppenheimer Memorial Trust (OMT) to support the full-time Postdoctoral studies (OMT Ref. 2023-1907).

\end{document}